\providecommand{\U}[1]{\protect\rule{.1in}{.1in}}
\providecommand{\lev}{\operatorname*{lev}}
\providecommand{\UMD}{\operatorname*{UMD}}
\providecommand{\NS}{\{0,1\}^2\backslash\{(0,0)\}}
\providecommand{\BMO}{\operatorname*{BMO}}
\providecommand{\parent}{\operatorname*{pre}}
\providecommand{\com}[1]{\complement#1}
\providecommand{\E}{\mathbb{E}}
\newtheorem{theorem}{Theorem}[section]
\newtheorem{defi}[theorem]{Definition}
\newtheorem{lemma}[theorem]{Lemma}
\newtheorem{proposition}[theorem]{Proposition}
\newenvironment{definition}{\begin{defi}\upshape}{\end{defi}}
\newenvironment{remark}{\noindent\textbf{Remark.}\upshape}{}
\numberwithin{equation}{section}
\begin{document}

\title{A Decomposition Theorem for Singular Integral Operators on Spaces of
Homogeneous Type}
\maketitle
\small
\begin{tabular}{p{10cm}l}
\textsc{Paul F.X. Müller} & \textsc{Markus Passenbrunner} \\
\textsc{Department of Analysis} & \textsc{Department of Analysis} \\ 
\textsc{J. Kepler University} & \textsc{J. Kepler University}\\
\textsc{Altenberger Strasse 69} & \textsc{Altenberger Strasse 69}\\
\textsc{A-4040 Linz} & \textsc{A-4040 Linz}\\
\textsc{Austria} & \textsc{Austria}\\
pfxm@bayou.uni-linz.ac.at & passenbr@bayou.uni-linz.ac.at
\end{tabular}
\normalsize
\tableofcontents
\newpage

\begin{abstract}
Let $(X,d,\mu)$ be a space of homogeneous type and $E$ a $\UMD$ Banach space. Under the assumption $\mu(\{x\})=0$ for all $x\in X$, we prove a decomposition theorem for singular integral operators on $(X,d,\mu)$ as a series of simple shifts and rearrangements plus two paraproducts. This gives a $T(1)$ Theorem in this setting.
\\
\\
\emph{MSC:} 42B20, 
60G42, 
46E40, 
47B38 
\\
\\
\emph{Keywords:} Spaces of homogeneous type; Singular integral operators; $\UMD$ spaces; Rearrangement and Shift operators; Martingale transforms
\end{abstract}

\section{Introduction}
The $T(1)$-Theorem for scalar valued singular integral operators on $\mathbb{R}^n$ was initially proved by David and Journé (\cite{DavidJourne1984}) using Fourier analysis methods. It was later extended to spaces of homogeneous type by Coifman (unpublished, see \cite{Christ1990book} and \cite{CoifmanJonesSemmes1989}).
The structural framework for both proofs is given by Cotlar-Stein theorem on almost orthogonal operators.
Consequently, different methods had to be developed to obtain a $T(1)$ theorem for integral operators taking values in general Banach spaces. This was done by T. Figiel (\cite{Figiel1988} and \cite{Figiel1990}) who introduced a general method of decomposing integral operators into series of basic building blocks. This decomposition arises canonically by expanding the integral kernel along the isotropic Haar system in $\mathbb{R}^n\times \mathbb{R}^n$. Thus proving boundedness of integral operators is reduced to the following problems:
\begin{itemize}
\item Verify a priori norm estimates for the building blocks (this is independent of the underlying integral kernel).
\item Verify compensating coefficient estimates arising in the isotropic series expansion of the kernel (the decay of the coefficients depends on the size and smoothness of the kernel under investigation). 
\end{itemize}
The basic building blocks isolated by Figiel are simple rearrangements and shifts plus two paraproducts. These rearrangements and shifts act on the Haar system in $\mathbb{R}$. It is important to note that their definition depends expressly on the group structure of the underlying domain $(\mathbb{R}^n,+)$.
Figiel's decomposition was applied later to several singular integral operators beyond the Calderón-Zygmund class. These included applications to Dirichlet kernels of generalized Franklin systems (\cite{KamontMueller2006}) and interpolatory estimates arising in the theory of compensated compactness (\cite{LeeMuellerMueller2011}).

In the present paper 
we extend Figiel's decomposition method to the setting of spaces of homogeneous type. 
Our extension of this method 
is based on constructing -- without recourse to group structure -- a suitable class of rearrangement and shift operators that allow us to decompose singular integral operators on $(X,d,\mu)$ into a series of basic building blocks that can be analyzed and estimated by combinatorial means. The central result of this paper is the convergence of this operator-series (\ref{eq:decomp2}).

A source of renewed interest in spaces of homogeneous type is the recent development of diffusion wavelets and their multiresolution analysis that was carried out on spaces of homogeneous type by Coifman and Maggioni (\cite{CoifmanMaggioni2006}). We recall further that the vector-valued $T(1)$ theorem on spaces of homogeneous type is an essential first step towards the solution of the open classification problem for the vector valued Banach spaces $H^1_E(X,d,\mu)$. See \cite{Mueller1995} and \cite{MuellerSchechtman1991}.

\section{Martingale Preliminaries}\label{sec:prel}
In this section, we collect a set of martingale inequalities we use throughout the paper.
\subsection{Kahane's Contraction Principle}
We use Kahane's contraction principle in the following form (\cite{Kahane1985},\cite{MarcusPisier1981}).
\begin{theorem} 
[Kahane, contraction principle]\label{th:kahane}Let $e_{1},\ldots,e_{m}$ be
elements in a Banach space $E$ and $r_{1},\ldots,r_{m}$ be independent
Rademacher functions. If $a_{1},\ldots,a_{m}$ are real numbers with
$\sup_{k\leq m}|a_{k}|\leq1,$ we have for any $1\leq p<\infty$%
\[
\int_{0}^{1}\left\Vert \sum_{k=1}^{m}a_{k}r_{k}(t)e_{k}\right\Vert _{E}%
^{p}dt\leq\int_{0}^{1}\left\Vert \sum_{k=1}^{m}r_{k}(t)e_{k}\right\Vert
_{E}^{p}dt.
\]
\end{theorem}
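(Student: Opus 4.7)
The plan is to reduce to the case of sign vectors by a convexity-plus-symmetry argument. Define, for $a=(a_1,\ldots,a_m)\in\mathbb{R}^m$,
\[
F(a) \;=\; \int_0^1 \Bigl\Vert \sum_{k=1}^m a_k r_k(t) e_k \Bigr\Vert_E^p\, dt.
\]
The first step is to show that $F$ is convex on $\mathbb{R}^m$. For fixed $t$, the map $a\mapsto \sum_k a_k r_k(t) e_k$ is affine in $a$, and the norm of an affine function is convex; composing with the increasing convex function $s\mapsto s^p$ on $[0,\infty)$ preserves convexity, and convexity is then preserved by integration in $t$.

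The second step uses the fact that a convex function on the compact convex cube $[-1,1]^m$ attains its maximum at an extreme point, and the extreme points are exactly the sign vectors $\epsilon=(\epsilon_1,\ldots,\epsilon_m)\in\{-1,1\}^m$. Therefore, under the hypothesis $\sup_k|a_k|\le 1$,
\[
F(a) \;\le\; \max_{\epsilon\in\{-1,1\}^m} F(\epsilon).
\]

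The final step is the symmetry observation: for any $\epsilon\in\{-1,1\}^m$, the joint distribution of $(\epsilon_k r_k)_{k=1}^m$ equals that of $(r_k)_{k=1}^m$, since each $r_k$ is symmetric and the $r_k$ are independent. Hence $F(\epsilon)=F(1,\ldots,1)$ for every sign vector, which combined with the previous step yields the claimed inequality. No step is particularly delicate; the only point requiring care is justifying convexity of $s\mapsto s^p$ composed with the norm (which needs $p\ge 1$ and that the norm is nonnegative, so that one composes a convex increasing function on $[0,\infty)$ with a nonnegative convex function), and invoking the extreme-point principle for convex functions on a compact convex set.
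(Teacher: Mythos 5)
The paper does not prove Theorem~\ref{th:kahane}: it is stated as a known result and sourced to \cite{Kahane1985} and \cite{MarcusPisier1981}, so there is no proof in the paper to compare your argument against. That said, your proof is correct and is essentially the standard textbook argument for the contraction principle: convexity of $F$ (norm of a linear function of $a$, composed with the increasing convex map $s\mapsto s^p$ on $[0,\infty)$, then averaged in $t$), the Bauer/extreme-point principle on the cube $[-1,1]^m$ to reduce to sign vectors, and the distributional symmetry $(\epsilon_k r_k)_k \overset{d}{=} (r_k)_k$. The only point worth stating explicitly, which you implicitly rely on, is that $F$ is continuous (e.g.\ by dominated convergence, since the Rademacher variables are bounded), so that the maximum-at-extreme-points principle applies; alternatively, one can avoid invoking the general extreme-point principle by maximizing coordinate-by-coordinate, using that a convex function of a single variable on $[-1,1]$ attains its maximum at an endpoint.
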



\subsection{$\UMD$ spaces}
\label{sec:SteinBourgain}
\begin{definition}
A Banach space $E$ is called a \emph{$\UMD$--space}\emph{\ (unconditional for martingale
differences)}, if for every $1<p<\infty$ there exists a constant $\beta_{p}$
such that for every $E$-valued martingale difference sequence $(d_{k}%
)_{k\geq0}$ we have the inequality%
\begin{equation}
\left\Vert \sum_{k=0}^{n}\varepsilon_{k}d_{k}\right\Vert _{L^p_E}%
\leq\beta_{p}\left\Vert \sum_{k=0}^{n}d_{k}\right\Vert _{L^p_E}\label{eq:UMD}%
\end{equation}
for all sequences $\varepsilon$ of numbers in $\{-1,1\}$ and all $n\in\mathbb{N}$.
\end{definition}

\begin{remark}\\
1. We remark that if there exists one $1<p<\infty$ with a constant
$\beta_{p}$ such that $(\ref{eq:UMD})$ holds, we have automatically that for
all $1<p<\infty$ there exists a constant $\beta_{p}$ with $(\ref{eq:UMD}).$\\
2. Hilbert spaces are $\UMD$--spaces, $\UMD$--spaces are reflexive and the $\UMD$--property is a self dual isomorphic invariant (see for instance \cite{Figiel1988},\cite{Figiel1990}, \cite{FigielWojtaszczyk2001} or \cite{Burkholder2001}).
\end{remark}

\subsection{The space $\BMO$}
We let $(X,\mathcal{F},\mu)$ be a probability space and $\{\mathcal{F}_k\}_{k\in\mathbb{N}_0}$ be a sequence of $\sigma$-algebras such that $\mathcal{F}$ is generated by the union $\cup_k\mathcal{F}_k$. 
For $f\in L^1(X)$ we introduce the abbreviations
\[
\mathbb{E}_{k}f:=\mathbb{E}(f|\mathcal{F}_{k})\quad\text{and\quad}\Delta
_{k}:=\mathbb{E}_{k}-\mathbb{E}_{k-1}.
\]

\begin{definition}(Bounded Mean Oscillation).
A function $f:X\rightarrow \mathbb{R}$ is said to be in $\BMO(X,(\mathcal{F}_{k}))$
if and only if $f$ is in $L^{2}(X)$ and 
\begin{equation}
\left\|f\right\|_{\BMO}:=\sup_{k\in\mathbb{N}}\left\|\sqrt{\E_k(|f-\E_{k-1}f|^2)}\right\|_\infty<\infty. \label{eq:BMOdef}
\end{equation}
This is a norm, if we factor out the constants.
\end{definition}

\begin{remark}
\label{rem:BMO}
Recall that no matter what exponent $1\leq p<\infty$ in
$(\ref{eq:BMOdef})$ is chosen instead of $2$, the definition leads to the same
space $\BMO
(X,(\mathcal{F}_{k}))$ with equivalent norms (cf.
\cite{Garsia1973} or \cite{Bourgain1983}).
\end{remark}

\section{Extracting Rearrangements on Spaces of Homogeneous Type}\label{sec:homtype}
This section contains an extensive combinatorial analysis of dyadic cubes in spaces of homogeneous type. We recall first basic properties of those cubes and of the martingale differences they generate. Thus we construct orthonormal bases in $L^2(X)$ and $L^2(X\times X)$. 
Next we introduce a coloring on the collection of all dyadic cubes, so that on each monochromatic subcollection there are well defined rearrangement operators that act like "shifts by $q^m$ units" (Proposition \ref{prop:boundednumber}). The complications in the proof of this proposition are due to the fact that we need to have good quantitative control on the numbers of colors involved. This in turn is dictated by the nature of the kernel operators we treat in Section \ref{sec:main}. Theorem \ref{th:zerl} is the second main result of this section. It provides the combinatorial basis for the norm estimates of the rearrangement operators defined in Section \ref{sec:rearr}.
\subsection{Definitions}
\begin{definition}
Let $X$ be a set. A mapping $d:X\times X\rightarrow\mathbb{R}_{0}^{+}$ with
the properties\vspace{-0.2cm}
\begin{enumerate}
\item $d(x,y)=0\Leftrightarrow x=y$,\vspace{-0.2cm}
\item $d(x,y)=d(y,x)$,\vspace{-0.2cm}
\item $d(x,y)\leq K(d(x,z)+d(z,y))$ for all $x,y,z\in X$ and some constant $K\geq 1$ that is independent of $x,y,z$. \vspace{-0.2cm}
\end{enumerate}
is called a \emph{quasimetric }and $(X,d)$ is called a \emph{quasimetric
space.}
\end{definition}
Given a quasimetric $d$, we define the ball centered at $x\in X$ with radius $r>0$ as
\[
B(x,r):=\{y\in X:d(x,y)<r\}.
\]
Additionally, a set $A\subset X$ is called \emph{open} if and only if for all $x\in X$ there exists $r>0$ such that $B(x,r)\subseteq A$.

\begin{definition}
\label{def:hom}Let $(X,d)$ be a quasimetric space such that every ball in the quasimetric $d$ is open and $\mu$ a Borel measure. If there is an
$A>0$ such that%
\[
 0<\mu(B(x,2r))\leq A\mu(B(x,r))<\infty,\quad \text{for all }x\in X\text{ and all } r>0,
\]
then $(X,d,\mu)$ is called a \emph{space of homogeneous type. }Additionally,
if there exist constants $b_{1},b_{2}$ such that%
\[
b_{1}r\leq\mu(B(x,r))\leq
b_{2}r
\]
for all $x\in X$ and all $r$ with $\mu(\{x\})<r<\mu(X)$, we call the space of homogeneous type $(X,d,\mu)$ $\emph{normal.}$
\end{definition}
\begin{remark}
We note that if $(X,d,\mu)$ is a space of homogeneous type, then for all
$\lambda>0$ there exists $A_{\lambda},$ such that%
\[
\mu(B(x,\lambda r))\leq A_{\lambda}\mu(B(x,r))\quad \text{for all } x\in X\text{ and all } r>0.
\]
Since for a given quasimetric space $(X,d)$, the balls in $X$ are not necessarily open, we added this condition to the definition. This is the case, if for instance one has a Hölder condition for $d$: There exists $C<\infty$ and $0<\beta <1$ such that for all $x,y,z\in X$ we have
\begin{equation}
|d(x,z)-d(y,z)|\leq Cd(x,y)^\beta \operatorname{max}\{d(x,z),d(y,z)\}^{1-\beta}.\label{eq:lipcond}
\end{equation}
In fact, Mac{\'{\i}}as and Segovia proved in \cite{MaciasSegovia1979} that for every space of homogeneous type there exists an equivalent quasimetric with the desired Hölder property. Here, a quasimetric $d'$ is equivalent to a quasimetric $d$ if there exists a finite constant $C$ such that
\[
\frac{1}{C}d(x,y)\leq d'(x,y)\leq Cd(x,y),
\]
whenever $x,y\in X$.
\end{remark}
\paragraph{Standard assumptions on $X$:}
In the following, we always assume that the spaces $X$ we work with are spaces
of homogeneous type, equipped with a quasimetric $d$ and a Borel probability measure
$\mu.$ Additionally we impose the restriction that $X$ is normal and that for all $x\in X$ we have $\mu(\{x\})=0,$ i.e. we have no isolated points.

\subsection{Dyadic Cubes}\label{sec:cubes}
In a space of homogeneous type there are analogues for dyadic cubes in $\mathbb{R}^n$ (see \cite{Christ1990}\ and \cite{David1991}).

\begin{theorem}
\label{th:dyadic}Let $(X,d,\mu)$ be a space of homogeneous type. Then there
exist a system of open sets%
\[
\mathcal{A}:=\{Q_{\alpha}^{n}\subseteq X~|~n\in\mathbb{Z,\alpha\in}%
\mathcal{K}_{n}\},
\]
points $z_{\alpha}^{n}\in Q_{\alpha}^{n}$ and constants $q>1,$ $c_{1}%
,c_{2},c_{3},\eta\in\mathbb{R}^{+},N\in \mathbb{N}$ such that we have the following properties\vspace{-0.2cm}

\begin{enumerate}
\item For all $n\in\mathbb{Z}$ we have that $X=%
{\displaystyle\bigcup\limits_{\alpha\in\mathcal{K}_{n}}}
Q_{\alpha}^{n}$ up to $\mu$-null sets.\vspace{-0.2cm}

\item For $Q_{\alpha}^{m},Q_{\beta}^{n}$ with $m\leq n$ and $\alpha
\in\mathcal{K}_{m}$ and $\beta\in\mathcal{K}_{n}$ we have either $Q_{\alpha
}^{m}\subseteq Q_{\beta}^{n}$ or $Q_{\alpha}^{m}\cap Q_{\beta}^{n}=\emptyset.$ That means that the cubes $\{Q_{\alpha}^n\}$ are nested. \vspace{-0.2cm}

\item For each $Q_{\alpha}^{n}$ and every $m\geq n$ there is exactly one
$\beta\in\mathcal{K}_{m}$ such that $Q_{\alpha}^{n}\subseteq Q_{\beta}^{m}.$\vspace{-0.2cm}

\item For all $n\in\mathbb{Z}$ and for all $\alpha\in\mathcal{K}_{n}$ we have that $B(z_{\alpha}%
^{n},c_{1}q^{n})\subseteq Q_{\alpha}^{n}\subseteq B(z_{\alpha}^{n},c_{2}%
q^{n})$.\vspace{-0.2cm}

\item With
\[
\partial_{t}Q_{\alpha}^{n}:=\{x\in Q_{\alpha}^{n}:d(x,X\backslash Q_{\alpha
}^{n})\leq tq^{n}\},
\]
we have
\[
\forall n\in\mathbb{Z}\forall\alpha\in\mathcal{K}_{n}:\mu(\partial
_{t}Q_{\alpha}^{n})<c_{3}t^{\eta}\mu(Q_{\alpha}^{n}).
\]\vspace{-0.4cm}

\item For all $n\in\mathbb{Z,}$ the set $\mathcal{K}_{n}$ is countable.\vspace{-0.2cm}

\item For all $n\in\mathbb{Z}$ and
all $\alpha\in\mathcal{K}_{n}$ we have $|\{\beta\in\mathcal{K}_{n-1}:Q_{\beta
}^{n-1}\subseteq Q_{\alpha}^{n}\}|\leq N$.\vspace{-0.2cm}

\item For all $n\in\mathbb{Z,}$ $\alpha\in\mathcal{K}_{n}$ there is a subset
$E$ of $\mathcal{K}_{n-1}$ with $|E|\leq N$ such that%
\[
Q_{\alpha}^{n}=%
{\displaystyle\bigcup\limits_{\beta\in E}}
Q_{\beta}^{n-1}\quad\text{up to }\mu\text{-null sets.}
\]\vspace{-0.2cm}
\end{enumerate}\vspace{-0.5cm}
\end{theorem}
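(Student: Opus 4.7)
The plan is to carry out the construction of dyadic cubes due to G.~David and M.~Christ, adapted to the present setting. First I would fix a parameter $q>1$ sufficiently large compared to the quasimetric constant $K$, and then for each $n\in\mathbb{Z}$ choose, by a Zorn/greedy argument, a maximal $q^n$-separated subset $\{z_\alpha^n:\alpha\in\mathcal{K}_n\}\subseteq X$. Maximality guarantees that the balls $B(z_\alpha^n,q^n/2)$ are pairwise disjoint while the dilates $B(z_\alpha^n,q^n)$ cover $X$. Countability of $\mathcal{K}_n$ (property 6) then follows because $\mu$ is a probability measure and each $B(z_\alpha^n,q^n/2)$ has strictly positive measure, using doubling and the no-atoms assumption.

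Next I would link the scales by a parent map: for each $\alpha\in\mathcal{K}_n$ pick a unique $p(\alpha)\in\mathcal{K}_{n+1}$ with $d(z_\alpha^n,z_{p(\alpha)}^{n+1})\leq q^{n+1}$, which exists by maximality at scale $n+1$. This gives a tree structure on $\bigsqcup_n\mathcal{K}_n$. I would then build preliminary cells $\widetilde{Q}_\alpha^n$ from the bottom up: fix a deep base scale, partition $X$ there by nearest-center (breaking ties by a fixed well-ordering of the centers), and at higher scales set $\widetilde{Q}_\alpha^n:=\bigcup\{\widetilde{Q}_\beta^{n-1}:p(\beta)=\alpha\}$. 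Nesting properties (1)--(3) are then immediate from the tree, and the bounded-branching properties (7)--(8) follow from a doubling count on how many $q^{n-1}$-separated centers can fit inside $B(z_\alpha^n,Cq^n)$. For the sandwich inclusion (4), the inclusion $B(z_\alpha^n,c_1q^n)\subseteq\widetilde{Q}_\alpha^n$ follows because $q^n$-separation prevents any other center from winning the nearest-center competition inside this small ball, while $\widetilde{Q}_\alpha^n\subseteq B(z_\alpha^n,c_2q^n)$ follows by iterating the parent inequality and summing a geometric series in $K$ and $q^{-1}$.

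The main obstacle is the small-boundary estimate (property 5): generic Voronoi-style cells need not have thin boundaries. Following Christ, I would introduce a combinatorial (or probabilistic) perturbation of the nearest-center assignment in the ambiguous transition region $\{x:\exists\,\alpha\ne\beta,\ d(x,z_\alpha^n)\approx d(x,z_\beta^n)\}$ near putative boundaries; by averaging over such perturbations one can locate cells for which $\mu(\partial_t Q_\alpha^n)\leq c_3 t^\eta\mu(Q_\alpha^n)$. The key quantitative input is that the boundary at scale $n$ lies inside a union of thin $tq^n$-neighbourhoods of separating surfaces between neighbouring centers, and the total measure of such neighbourhoods is controlled by the doubling exponent $\eta$ after telescoping through finer scales. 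This is the technically delicate step and the one I would expect to be the main source of difficulty.

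Finally, to obtain genuinely open cubes as demanded by the statement, I would replace each $\widetilde{Q}_\alpha^n$ by its topological interior. Openness of balls (assumed in Definition~3.2) makes this interior well-behaved, and property (5) with $t\downarrow 0$ already forces $\mu(\partial\widetilde{Q}_\alpha^n)=0$, so this modification affects neither the measure-theoretic partition properties (1)--(3) nor the sandwich bound (4). The centers $z_\alpha^n$ remain interior points provided $q$ was chosen large enough, so the entire construction satisfies all eight properties simultaneously.
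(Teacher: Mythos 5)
You are attempting to prove a result that the paper itself does not prove: Theorem \ref{th:dyadic} is taken as a black box from Christ and David (see the remark following it), so the only fair comparison is with their construction, which is indeed the one you are sketching. Most of your outline (maximal $q^n$-separated nets, parent map with tie-breaking, countability from disjoint balls of positive measure in a probability space, the doubling counts for (6)--(8), the geometric-series argument for the outer inclusion in (4)) is the standard and correct skeleton. But two steps have genuine gaps. First, the theorem demands a single coherent family of cubes for \emph{all} $n\in\mathbb{Z}$, and the scales are unbounded below, so there is no ``deep base scale'' to partition by nearest centers and aggregate upwards from. The families produced from two different base scales are not consistent (the Voronoi cells at a fixed level $n$ change when the base scale is lowered), and there is no monotonicity that would let you pass to a limit without a further compactness or stabilization argument that you do not supply. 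Christ avoids this entirely by defining the cube of an index $\alpha$ at level $n$ intrinsically through its whole subtree: essentially as (the interior of) $\bigcup_{\beta\preceq\alpha} B(z_\beta^l, a_0 q^l)$ over all descendants $\beta$ at all levels $l\leq n$, which is scale-coherent by construction and is also how openness is obtained directly rather than by taking interiors afterwards.

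Second, the thin-boundary estimate (5) --- which is exactly the property this paper actually exploits, via the bound $X_l^A\leq C_3 q^{l(1-\eta)}$ in Section \ref{sec:step4} --- is left at the level of an intention. The proposed ``perturb the nearest-center assignment in the ambiguous region and average over perturbations'' is not Christ's argument; averaging arguments of that type (as in random dyadic lattices) yield boundary-layer bounds in expectation over the random family, not the uniform estimate $\mu(\partial_t Q_\alpha^n)<c_3 t^\eta\mu(Q_\alpha^n)$ for every single cube that property (5) requires, and you give no mechanism for extracting one fixed family achieving it simultaneously for all $\alpha,n,t$. The deterministic proof goes differently: every cube contains the inner ball $B(z_\beta^l,c_1q^l)$ of each of its descendants, so a point of $Q_\alpha^n$ within distance $tq^n$ of the complement must, at an intermediate level $l$ with $q^l$ comparable to a fixed power of $t$ times $q^n$, lie outside the inner ball of the level-$l$ subcube containing it; iterating this exclusion through consecutive scales and using doubling gives a geometric loss of measure per scale, whence the exponent $\eta$. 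Without this (or an equivalent) mechanism, property (5) --- and with it the part of the paper that depends on it --- is not established, so the proposal as written is incomplete at its most critical point.
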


\begin{remark}
We note that these dyadic cubes were constructed by Christ in \cite{Christ1990} and by David in \cite{David1991} in a slightly different way. 
We further remark that in the future use of the dyadic cubes, we neglect $\mu$-null sets in points $1$ and $8$ of Theorem $\ref{th:dyadic}$ and assume equality.
\end{remark}\vspace{0.2cm}

We now collect a few useful definitions, which we will need in the sequel.
\begin{definition}\label{def:misc} We let
\[
\mathcal{A}_{n}:=\{Q_{\alpha}^{n}:\alpha\in\mathcal{K}_{n}\},
\]
be the set of dyadic cubes with level $n\in\mathbb{Z}.$ Furthermore, let $A\in\mathcal{A}_{n+1}$ and
choose $A^{\ast}(A)\in\mathcal{A}_{n}$ arbitrarily (but fixed for all subsequent sections) with $A^{\ast}(A)\subseteq A.$ Then we
set%
\[
\mathcal{E}(A):=\{B\in\mathcal{A}_{n}:B\subseteq A\backslash A^{\ast}(A)\}.
\]
We denote the cardinality $|\mathcal{E}(A)|$ of this set by $N(A)$.
Additionally, we define the level of $A\in\mathcal{A}_{n+1}$ as
\[
\lev A:=n+1.
\]
The unique element $A\in\mathcal{A}_{n+1}$ such that for $Q\in\mathcal{A}_n$ we have $Q\subset A$ will be denoted by
\begin{equation}
\parent Q, \label{eq:defparent}
\end{equation}
which indicates that $A$ is the predecessor of $Q$.
Furthermore, we define a subset of dyadic cubes
\[
\mathcal{E}(\mathcal{A}):=\bigcup_{A\in\mathcal{A}}\mathcal{E}(A)
\]

\end{definition}
\begin{remark}
Due to Point 7 of Theorem \ref{th:dyadic} we have that the cardinality $N(A)$ of $\mathcal{E}(A)$ is bounded by a uniform constant $N-1$ independent of $A\in\mathcal{A}%
_{n+1}$.
\end{remark}

\subsection{Martingale Differences}

Let $(X,d,\mu)$ be a space of homogeneous type with $\mu(X)=1$. Then we have $X=Q_{1}^{0},$
$\mathcal{K}_{0}=\{1\},\mathcal{A}_{0}=\{X\}$ and $\mathcal{K}_{n}=\emptyset$
for all $n\in\mathbb{N}.$ We use then dyadic cubes to build an orthonormal basis in
$L^{2}(X,d,\mu)$ consisting of martingale differences. Fix $n\in-\mathbb{N}$,
$A\in\mathcal{A}_{n+1}$ and enumerate the elements in $\mathcal{E}(A)$ in the
way that $\mathcal{E}(A)=\{Q_{1},\ldots,Q_{N(A)}\}.$ Additionally we set
$Q_{N(A)+1}:=A^{\ast}(A).$ We define the following functions, supported on
$A.$

\begin{definition}
\label{def:basisfunct}We define for $1\leq k\leq N(A)$ and $x\in X$%
\[
d_{Q_{k}}(x):=c_{Q_{k}}\left\{
\begin{array}
[c]{ll}%
0, & \text{if }x\in%
{\displaystyle\bigcup\limits_{j=1}^{k-1}}
Q_{j}\cup(X\backslash A)\\
\sum_{j=k+1}^{N(A)+1}\mu(Q_{j}), & \text{if }x\in Q_{k}\\
-\mu(Q_{k}), & \text{if }x\in%
{\displaystyle\bigcup\limits_{j=k+1}^{N(A)+1}}
Q_{j}%
\end{array}
,\right.
\]
where we choose $c_{Q_{k}}$ such that
\begin{equation}
\left\Vert d_{Q_{k}}\right\Vert _{2}=1.\label{eq:normalization}%
\end{equation}
\end{definition}

\begin{remark}
The functions defined in Definition \ref{def:basisfunct} are obviously a martingale difference sequence. We record here also that these martingale differences are just the result of the Gram Schmidt orthogonalization process applied to the indicator functions
\begin{equation}
1_{A},1_{Q_{1}},\ldots,1_{Q_{N(A)}}.\label{eq:indicator}
\end{equation}
\end{remark}

\begin{figure}
\centering
\ifpdf
  \setlength{\unitlength}{1bp}%
  \begin{picture}(347.28, 206.70)(0,0)
  \put(0,0){\includegraphics{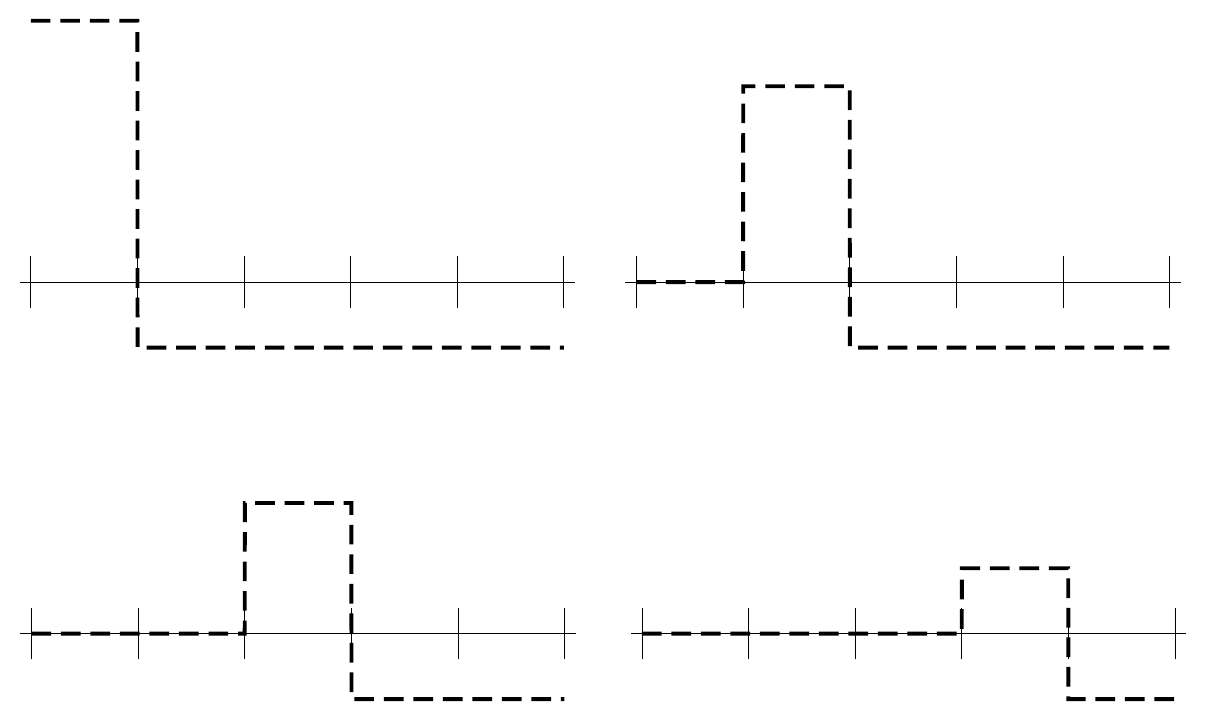}}
  \put(78.49,177.16){\fontsize{9.96}{11.95}\selectfont $d_{Q_1}$}
  \put(255.41,67.31){\fontsize{9.96}{11.95}\selectfont $d_{Q_4}$}
  \put(78.49,67.31){\fontsize{9.96}{11.95}\selectfont $d_{Q_3}$}
  \put(255.41,176.78){\fontsize{9.96}{11.95}\selectfont $d_{Q_2}$}
  \put(17.97,115.88){\fontsize{9.96}{11.95}\selectfont $Q_1$}
  \put(48.42,115.88){\fontsize{9.96}{11.95}\selectfont $Q_2$}
  \put(79.26,115.88){\fontsize{9.96}{11.95}\selectfont $Q_3$}
  \put(109.71,115.88){\fontsize{9.96}{11.95}\selectfont $Q_4$}
  \put(140.55,115.88){\fontsize{9.96}{11.95}\selectfont $Q_5$}
  \put(195.28,14.89){\fontsize{9.96}{11.95}\selectfont $Q_1$}
  \put(225.73,14.89){\fontsize{9.96}{11.95}\selectfont $Q_2$}
  \put(256.57,14.89){\fontsize{9.96}{11.95}\selectfont $Q_3$}
  \put(287.02,14.89){\fontsize{9.96}{11.95}\selectfont $Q_4$}
  \put(317.85,14.89){\fontsize{9.96}{11.95}\selectfont $Q_5$}
  \put(17.97,14.89){\fontsize{9.96}{11.95}\selectfont $Q_1$}
  \put(48.42,14.89){\fontsize{9.96}{11.95}\selectfont $Q_2$}
  \put(79.26,14.89){\fontsize{9.96}{11.95}\selectfont $Q_3$}
  \put(109.71,14.89){\fontsize{9.96}{11.95}\selectfont $Q_4$}
  \put(140.55,14.89){\fontsize{9.96}{11.95}\selectfont $Q_5$}
  \put(192.20,115.88){\fontsize{9.96}{11.95}\selectfont $Q_1$}
  \put(222.65,115.88){\fontsize{9.96}{11.95}\selectfont $Q_2$}
  \put(253.48,115.88){\fontsize{9.96}{11.95}\selectfont $Q_3$}
  \put(283.93,115.88){\fontsize{9.96}{11.95}\selectfont $Q_4$}
  \put(314.77,115.88){\fontsize{9.96}{11.95}\selectfont $Q_5$}
  \end{picture}%
\else
  \setlength{\unitlength}{1bp}%
  \begin{picture}(347.28, 206.70)(0,0)
  \put(0,0){\includegraphics{Basis.eps}}
  \put(78.49,177.16){\fontsize{9.96}{11.95}\selectfont $d_{Q_1}$}
  \put(255.41,67.31){\fontsize{9.96}{11.95}\selectfont $d_{Q_4}$}
  \put(78.49,67.31){\fontsize{9.96}{11.95}\selectfont $d_{Q_3}$}
  \put(255.41,176.78){\fontsize{9.96}{11.95}\selectfont $d_{Q_2}$}
  \put(17.97,115.88){\fontsize{9.96}{11.95}\selectfont $Q_1$}
  \put(48.42,115.88){\fontsize{9.96}{11.95}\selectfont $Q_2$}
  \put(79.26,115.88){\fontsize{9.96}{11.95}\selectfont $Q_3$}
  \put(109.71,115.88){\fontsize{9.96}{11.95}\selectfont $Q_4$}
  \put(140.55,115.88){\fontsize{9.96}{11.95}\selectfont $Q_5$}
  \put(195.28,14.89){\fontsize{9.96}{11.95}\selectfont $Q_1$}
  \put(225.73,14.89){\fontsize{9.96}{11.95}\selectfont $Q_2$}
  \put(256.57,14.89){\fontsize{9.96}{11.95}\selectfont $Q_3$}
  \put(287.02,14.89){\fontsize{9.96}{11.95}\selectfont $Q_4$}
  \put(317.85,14.89){\fontsize{9.96}{11.95}\selectfont $Q_5$}
  \put(17.97,14.89){\fontsize{9.96}{11.95}\selectfont $Q_1$}
  \put(48.42,14.89){\fontsize{9.96}{11.95}\selectfont $Q_2$}
  \put(79.26,14.89){\fontsize{9.96}{11.95}\selectfont $Q_3$}
  \put(109.71,14.89){\fontsize{9.96}{11.95}\selectfont $Q_4$}
  \put(140.55,14.89){\fontsize{9.96}{11.95}\selectfont $Q_5$}
  \put(192.20,115.88){\fontsize{9.96}{11.95}\selectfont $Q_1$}
  \put(222.65,115.88){\fontsize{9.96}{11.95}\selectfont $Q_2$}
  \put(253.48,115.88){\fontsize{9.96}{11.95}\selectfont $Q_3$}
  \put(283.93,115.88){\fontsize{9.96}{11.95}\selectfont $Q_4$}
  \put(314.77,115.88){\fontsize{9.96}{11.95}\selectfont $Q_5$}
  \end{picture}%
\fi
\caption{\label{fig:basisfunct}%
 Schematic plots of functions in Definition \ref{def:basisfunct}, where $N$ is set to $5$.}
\end{figure}

Now we enumerate all the functions $d_{Q},$ $Q\in\mathcal{E}(A)$ where
$A\in\mathcal{A}_{n+1},n\in-\mathbb{N}$ in a canonical way, we set
\[
d_{0}:=1_{X}%
\]
and get the functions that are a basis in the constant functions on
$\{Q_{1},\ldots,Q_{N(X)},A^{\ast}(X)\},$ where $Q_{i}\in\mathcal{E}(X)$ for
$1\leq i\leq N(X)$ and set%
\[
d_{1}=d_{Q_{1}},\ldots,d_{N(X)}=d_{Q_{N(X)}}.
\]
We continue with this procedure on every $Q_{i},$ so we get an enumeration of
all functions $d_{Q},$ $Q\in\mathcal{E}(A)$ for $A\in\mathcal{A}_{n+1},$
$n\in-\mathbb{N}$ such that the order is preserved in the following way
\[
k\leq j\Rightarrow \lev R\geq \lev Q\quad\text{ for } d_k=d_R\text{ and } d_j=d_Q.
\]
We refer to the functions $d_{Q}$ as Haar functions.
According to this enumeration we define $\sigma$-algebras:%
\[
\mathcal{F}_{i}:=\sigma(d_{0},\ldots,d_{i})\quad\text{for }i\in\mathbb{N}_0.
\]
With respect to this filtration, the collection $\{d_{k}\}_{k\in\mathbb{N}}$
is a martingale difference sequence, since we have $\mathbb{E}(d_{k}|\mathcal{F}_{k-1})=0$ for every $k\in\mathbb{N}$.
Another important sequence of $\sigma$-algebras that we need later is a suitable subsequence of the $\sigma$-algebras just created. We set
\begin{equation}
\mathcal{F}^{\lev}_k:=\sigma(\mathcal{A}_{-k})\quad\text{for }k\in\mathbb{N}_0,\label{eq:sigmalev}
\end{equation}
where the superscript $\lev$ should indicate that $\mathcal{F}^{\lev}_k$ is the $\sigma$-algebra generated by all dyadic cubes of level $-k$. 

As in the case $X=\mathbb{R}$ with the standard Haar functions, we have that the $L^\infty$ norm of an $L^2$ normalized Haar function $d_Q$ is (approximately) $\mu(Q)^{-1/2}$, which is a simple consequence of Theorem \ref{th:dyadic} and the normality of $X$.
\begin{lemma}\label{lem:supboundd}
There exists a constant $c<\infty$ depending only on $X$ such that 
\[
c^{-1}\mu(Q)^{-1/2}\leq\left\|d_Q\right\|_\infty\leq c\mu(Q)^{-1/2}\quad \text{for all }Q\in\mathcal{E}(\mathcal{A}).
\]

\end{lemma}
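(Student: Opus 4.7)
The plan is to write down $\|d_{Q_k}\|_\infty$ and the normalization constant $c_{Q_k}$ explicitly, then show via the normality of $X$ that the measures entering both expressions are comparable to $\mu(Q_k)$ up to constants depending only on $X$.

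First, I would establish a uniform measure estimate for cubes of the same level: there exist constants $a_1,a_2>0$ (depending only on $X$) such that
\[
a_1 q^n \leq \mu(Q) \leq a_2 q^n \quad \text{for every } Q \in \mathcal{A}_n.
\]
This follows immediately from the inclusion $B(z_\alpha^n,c_1q^n)\subseteq Q_\alpha^n\subseteq B(z_\alpha^n,c_2q^n)$ of point 4 of Theorem~\ref{th:dyadic} together with the normality bounds $b_1 r \leq \mu(B(x,r)) \leq b_2 r$.

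Next, with the notation of Definition~\ref{def:basisfunct}, fix $A \in \mathcal{A}_{n+1}$, $Q_k \in \mathcal{E}(A)$, and set
\[
S_k := \sum_{j=k+1}^{N(A)+1}\mu(Q_j).
\]
A direct computation from the three-piece definition of $d_{Q_k}$ gives
\[
\|d_{Q_k}\|_2^2 \;=\; c_{Q_k}^2 \bigl( \mu(Q_k) S_k^2 + \mu(Q_k)^2 S_k \bigr) \;=\; c_{Q_k}^2\,\mu(Q_k)\,S_k\,\bigl(\mu(Q_k)+S_k\bigr),
\]
so the normalization $\|d_{Q_k}\|_2=1$ forces
\[
c_{Q_k}^2 \;=\; \frac{1}{\mu(Q_k)\,S_k\,(\mu(Q_k)+S_k)}.
\]
On the other hand, since $S_k$ and $\mu(Q_k)$ are the two nonzero values attained by $d_{Q_k}/c_{Q_k}$,
\[
\|d_{Q_k}\|_\infty \;=\; c_{Q_k}\,\max\bigl(S_k,\mu(Q_k)\bigr).
\]

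Now comes the only nontrivial point, which is to show $S_k \asymp \mu(Q_k)$. All cubes $Q_1,\ldots,Q_{N(A)+1}$ are children of $A$ and therefore lie in $\mathcal{A}_n$, so by the first step each satisfies $\mu(Q_j) \asymp q^n$. Since the sum defining $S_k$ contains at least the single term $\mu(Q_{N(A)+1})=\mu(A^\ast(A))$ (because $k \leq N(A)$) and at most $N$ terms by point 7 of Theorem~\ref{th:dyadic}, we conclude
\[
a_1 q^n \;\leq\; S_k \;\leq\; N a_2 q^n,
\]
and therefore $S_k \asymp \mu(Q_k) \asymp q^n$ with constants depending only on $X$.

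Plugging this back into the formula for $c_{Q_k}$ gives $c_{Q_k} \asymp \mu(Q_k)^{-3/2}$, and then
\[
\|d_{Q_k}\|_\infty \;=\; c_{Q_k}\max(S_k,\mu(Q_k)) \;\asymp\; \mu(Q_k)^{-3/2}\cdot \mu(Q_k) \;=\; \mu(Q_k)^{-1/2},
\]
which is the claim. There is no real obstacle here: once one observes that normality forces same-level children to have comparable measure, the rest is an elementary algebraic calculation from Definition~\ref{def:basisfunct}.
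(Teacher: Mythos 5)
Your proof is correct, and it follows the route the paper itself alludes to: the paper states Lemma~\ref{lem:supboundd} without proof, calling it a simple consequence of Theorem~\ref{th:dyadic} and the normality of $X$, and your argument fills in exactly that computation — establish $\mu(Q)\asymp q^{\lev Q}$ from normality and point~4 of Theorem~\ref{th:dyadic}, then solve for $c_{Q_k}$ from the normalization $(\ref{eq:normalization})$ and observe $S_k\asymp\mu(Q_k)$ because all terms come from same-level cubes (at least one and at most $N$ of them).
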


Another simple consequence of Theorem \ref{th:dyadic} is
\begin{lemma}
$\cup_{l=1}^{\infty}\mathcal{F}_{l} $ generates the Borel $\sigma$-algebra on X.
\end{lemma}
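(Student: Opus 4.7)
The plan is to show both inclusions in $\sigma(\bigcup_{l\geq 1}\mathcal{F}_l) = \mathcal{B}(X)$, where $\mathcal{B}(X)$ denotes the Borel $\sigma$-algebra. The inclusion $\sigma(\bigcup_l\mathcal{F}_l)\subseteq\mathcal{B}(X)$ is immediate: by Theorem \ref{th:dyadic} every dyadic cube $Q^n_\alpha$ is open, and each Haar function $d_j$ is by construction a finite linear combination of indicators of such cubes, so every $\mathcal{F}_l$ is generated by Borel sets.

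For the reverse inclusion I would first verify that the coarser subsequence $\mathcal{F}^{\lev}_k=\sigma(\mathcal{A}_{-k})$ from (\ref{eq:sigmalev}) generates the same $\sigma$-algebra as $(\mathcal{F}_l)$. On the one hand, each $d_Q$ with $Q\in\mathcal{E}(A)$, $A\in\mathcal{A}_{n+1}$, is by Definition \ref{def:basisfunct} constant on every cube of $\mathcal{A}_n$, so $\mathcal{F}_l\subseteq\mathcal{F}^{\lev}_K$ for $K$ large enough. On the other hand, the Gram--Schmidt remark after Definition \ref{def:basisfunct} expresses each indicator $1_Q$ with $Q\in\mathcal{A}_{-k}$ as a finite linear combination of Haar functions $d_0,d_1,\ldots,d_L$, which gives $\mathcal{F}^{\lev}_k\subseteq\mathcal{F}_L$ for $L$ sufficiently large.

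It therefore suffices to show that every open $U\subseteq X$ lies in $\sigma(\bigcup_k\mathcal{F}^{\lev}_k)$. Given $x\in U$ I would pick $r>0$ with $B(x,r)\subseteq U$ and, using the standing convention (from the remark after Theorem \ref{th:dyadic}) that the cubes of $\mathcal{A}_n$ genuinely partition $X$, select for each $n\leq 0$ the unique $Q^n_\alpha\ni x$. Point 4 of Theorem \ref{th:dyadic} together with the quasi-triangle inequality yields $\mathrm{diam}(Q^n_\alpha)\leq 2Kc_2 q^n$; since $q>1$ we have $q^n\to 0$ as $n\to-\infty$, so for $n$ sufficiently negative $Q^n_\alpha\subseteq B(x,r)\subseteq U$. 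Varying $x$ exhibits $U$ as a union of dyadic cubes, countable by Point 6 of Theorem \ref{th:dyadic}, and therefore $U\in\sigma(\bigcup_k\mathcal{F}^{\lev}_k)$.

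The only mildly delicate point is keeping track of the quasi-triangle constant $K$ in the diameter estimate, but this is harmless because $q^n$ decays geometrically; I do not expect a substantive obstacle.
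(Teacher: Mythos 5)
Your proof is correct. The paper offers no proof of this lemma, merely calling it a ``simple consequence of Theorem~\ref{th:dyadic}''; your argument supplies exactly the standard details. The one inclusion follows since each Haar function $d_j$ is a finite linear combination of indicators of open cubes, hence Borel measurable. The reverse inclusion reduces, as you observe, to showing each $\mathcal{F}^{\lev}_k$ is generated by finitely many Haar functions (via inverting the Gram--Schmidt expressions) and then that every open set is a countable union of dyadic cubes, which follows from the diameter bound $\operatorname{diam} Q_\alpha^n \le 2Kc_2 q^n \to 0$ as $n\to-\infty$ and the countability of $\bigcup_n \mathcal{A}_n$. Your handling of the quasi-triangle constant and of the standing convention that the cubes genuinely partition $X$ is appropriate; there is no gap.
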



\begin{remark}
If $E$ is a $\UMD$--space, it is in particular reflexive and thus satisfies the Radon-Nikodym property. So, the martingale convergence theorem (see \cite{Chatterji1968}) and the above lemma yield that for $f\in L^p_E(X)$ we have
that%
\[
\lim_{k\rightarrow\infty}\left\Vert \mathbb{E}(f|\mathcal{F}_{k})-f\right\Vert
_{L_{E}^{p}(X)}=0
\]
for all $1\leq p<\infty.$ So we get for every $f\in L_{E}^{p}(X)$ a unique series expansion
\[
f=\sum_{k=0}^{\infty}a_{k}d_{k},\quad a_{k}\in E,
\]
which converges unconditionally in $L_{E}^{p}(X)$ for $1<p<\infty$. In particular for $p=2$ and
$E=\mathbb{R}$, $(d_{k})_{k\in\mathbb{N}}$ is an orthonormal basis.
\end{remark}

\subsection{Isotropic Basis in $L^{2}(X\times X)$}\label{sec:isotr}

Next we introduce an isotropic orthogonal basis in $L^{2}(X\times X).$ Here, the word isotropic means that for an element $f\otimes g$ of this basis (here, $f\otimes g(x,y):=f(x)g(y)$ is the standard tensor product of two functions), the support looks like a square and not like a rectangle.  Most of the notation used in the sequel was introduced in Definition \ref{def:misc}. 
Let $\varepsilon\in \{0,1\}$. For $Q\in \mathcal{E}(A)$ and $A\in \mathcal{A}$ we define
\[
d_Q^{(\varepsilon)}:=d_Q \quad\text{for }\varepsilon=1,\quad \text{ and }\quad d_Q^{(\varepsilon)}:=\frac{1_A}{\sqrt{\mu(A)}} \quad\text{for }\varepsilon=0.
\]
Note that the function $d_Q^{(0)}$ is $L^2$-normalized as is $d_Q^{(1)}$.
With these settings, we define the collection of functions on $X\times X$:
\begin{equation}
Z:=\{1_X\otimes 1_X\}\cup \{d_Q^{(\varepsilon_1)}\otimes d_R^{(\varepsilon_2)} :Q,R\in\mathcal{E}(\mathcal{A}),\lev Q=\lev R,\varepsilon=(\varepsilon_1,\varepsilon_2)\in \NS\}. \label{eq:orthz}
\end{equation}
Explicitly, up to constants, the three groups in ($\ref{eq:orthz}$) have the form
\begin{align}
\{d_{Q}\otimes d_{R}  & :A,B\in\mathcal{A}_{n+1},Q\in\mathcal{E}%
(A),R\in\mathcal{E}(B),n\in-\mathbb{N}\},\label{eq:basis1}\\
\{d_{Q}\otimes1_{B}  & :A,B\in\mathcal{A}_{n+1},Q\in\mathcal{E}(A),n\in
-\mathbb{N}\},\label{eq:basis2}\\
\{1_{A}\otimes d_{R}  & :A,B\in\mathcal{A}_{n+1},R\in\mathcal{E}%
(B),n\in-\mathbb{N}\}\label{eq:basis3}%
\end{align}
The system $Z$ forms an orthonormal basis in $L^2(X\times X)$ and this result follows from the well known classical 
\begin{lemma}
\label{lem:orthprod}If $\{e_{k}\}_{k=1}^{\infty}$ is an orthogonal basis in
$L^{2}(X,\mathcal{F},\mu),$ then $\{e_{k}\otimes e_{j}\}_{k,j=1}^{\infty}$ is
an orthogonal basis in $L^{2}(X\times X,\mathcal{F}\otimes\mathcal{F}%
,\mu\otimes\mu).$
\end{lemma}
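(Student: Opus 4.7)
The plan is to prove the two defining properties of an orthogonal basis separately: (i) mutual orthogonality of the tensor products, and (ii) completeness, i.e.\ that the closed linear span of $\{e_k\otimes e_j\}_{k,j\geq 1}$ is all of $L^2(X\times X,\mathcal{F}\otimes\mathcal{F},\mu\otimes\mu)$. Since each $e_k\otimes e_j$ lies in $L^2(X\times X)$ (its $L^2$-norm factors as $\|e_k\|_2\|e_j\|_2$ by Fubini), both statements make sense.

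For orthogonality, given indices $(k,j)\neq(k',j')$, Fubini's theorem gives
\[
\langle e_k\otimes e_j,\, e_{k'}\otimes e_{j'}\rangle_{L^2(X\times X)}
= \int_X e_k(x)\overline{e_{k'}(x)}\,d\mu(x)\cdot\int_X e_j(y)\overline{e_{j'}(y)}\,d\mu(y),
\]
and at least one factor vanishes by orthogonality of $\{e_k\}$ in $L^2(X)$.

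For completeness, I would show that any $F\in L^2(X\times X)$ orthogonal to every $e_k\otimes e_j$ is zero a.e. Fix $j$ and put $g_j(x):=\int_X F(x,y)\overline{e_j(y)}\,d\mu(y)$; by Cauchy--Schwarz in $y$ and Fubini, $g_j\in L^2(X,\mu)$ with $\|g_j\|_2\leq\|F\|_{L^2(X\times X)}\|e_j\|_2$. The hypothesis and Fubini yield $\langle g_j,e_k\rangle_{L^2(X)}=0$ for every $k$, so by completeness of $\{e_k\}$ we get $g_j=0$ $\mu$-a.e. Hence for each $j$ there is a null set $N_j\subseteq X$ such that $\int_X F(x,y)\overline{e_j(y)}\,d\mu(y)=0$ for all $x\notin N_j$. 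Setting $N:=\bigcup_j N_j$ (still a null set, since countable), we deduce: for every $x\notin N$, the section $F(x,\cdot)$ is in $L^2(X,\mu)$ (again by Fubini, after discarding another null set) and is orthogonal to each $e_j$, hence vanishes $\mu$-a.e.\ in $y$. Applying Fubini one more time gives $F=0$ in $L^2(X\times X)$.

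The only delicate point is the juggling of null sets from the successive applications of Fubini, but nothing is substantive here; the countability of the basis index set makes the union $N=\bigcup_j N_j$ a null set, and everything goes through routinely. No special feature of $(X,\mathcal{F},\mu)$ beyond $\sigma$-finiteness (given here since $\mu$ is a probability measure) is needed.
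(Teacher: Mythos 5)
The paper gives no proof of this lemma at all — it is cited as a ``well known classical'' fact. Your argument is the standard textbook proof: orthogonality of the tensor products follows from Fubini and the factorization of the inner product, and completeness follows by testing a function $F\perp e_k\otimes e_j$ against the sections $g_j(x)=\int F(x,y)\overline{e_j(y)}\,d\mu(y)$, using completeness of $\{e_k\}$ twice and the countability of the index set to control the accumulated null sets. The proof is correct and complete, and the bookkeeping with null sets is handled carefully (including the need to discard the null set where $F(x,\cdot)\notin L^2$). Since the paper treats this as background, there is nothing to compare against; your write-up would serve perfectly well as the omitted proof.
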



\begin{lemma}\label{lem:basis}
$Z$ is an orthonormal basis in $L^{2}(X\times X).$
\end{lemma}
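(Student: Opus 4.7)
The plan is to reduce the claim to Lemma \ref{lem:orthprod} by recognising $Z$, level by level, as a regrouping of the product Haar basis of $L^2(X \times X)$.

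For $n\le 0$ set $V_n:=L^2(X,\sigma(\mathcal A_n))$; Theorem \ref{th:dyadic} gives the nested chain $V_0\subset V_{-1}\subset V_{-2}\subset\cdots$, the preceding lemma yields $\overline{\bigcup_n V_n}=L^2(X)$, and $\mu(X)=1$ forces $V_0=\mathbb R\cdot 1_X$. Writing $W_n:=V_n\ominus V_{n+1}$, the construction of Section \ref{sec:cubes} identifies $\{d_Q:\lev Q=n\}$ as an orthonormal basis of $W_n$ and $\{1_A/\sqrt{\mu(A)}:A\in\mathcal A_n\}$ as an orthonormal basis of $V_n$. Tensoring these two filtrations and telescoping yields the orthogonal decomposition
\[
L^2(X\times X)=(V_0\otimes V_0)\oplus\bigoplus_{n\le -1}\bigl[(W_n\otimes W_n)\oplus(W_n\otimes V_{n+1})\oplus(V_{n+1}\otimes W_n)\bigr].
\]

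Next I would partition $Z\setminus\{1_X\otimes 1_X\}$ by level $n=\lev Q=\lev R$ and by $\varepsilon\in\NS$. Using the identity $d_R^{(0)}=1_{\parent R}/\sqrt{\mu(\parent R)}$, the distinct elements produced by the three admissible choices of $\varepsilon$ at level $n$ are
\[
\{d_Q\otimes d_R\},\qquad \{d_Q\otimes 1_B/\sqrt{\mu(B)}\},\qquad \{1_A/\sqrt{\mu(A)}\otimes d_R\},
\]
with $\lev Q=\lev R=n$ and $A,B$ ranging over $\mathcal A_{n+1}$. Lemma \ref{lem:orthprod} applied respectively to the ONB pairs $(\{d_Q\},\{d_R\})$, $(\{d_Q\},\{1_B/\sqrt{\mu(B)}\})$, $(\{1_A/\sqrt{\mu(A)}\},\{d_R\})$ shows that each of these collections is an orthonormal basis of the corresponding summand $W_n\otimes W_n$, $W_n\otimes V_{n+1}$, $V_{n+1}\otimes W_n$. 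Together with $1_X\otimes 1_X$ spanning $V_0\otimes V_0$, this exhausts the decomposition displayed above.

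Finally I would verify that $1_X\otimes 1_X$ is orthogonal to every other element of $Z$: since $(\varepsilon_1,\varepsilon_2)\in\NS$, at least one factor $d_Q^{(\varepsilon_i)}$ is a genuine Haar function and has mean zero against $1_X$. Orthogonality across different levels $n$ is built into the telescoping decomposition, and orthogonality between the three $\varepsilon$-blocks at a fixed level follows from $W_n\perp V_{n+1}$. The only step requiring a bit of care is the collapse $d_R^{(0)}\leftrightarrow \parent R$, which reduces the a priori double-indexing by $(Q,R)$ in the mixed $\varepsilon$-blocks to an enumeration by $(Q,\parent R)$, so that each basis vector of $W_n\otimes V_{n+1}$ (respectively $V_{n+1}\otimes W_n$) is produced exactly once; the disjoint supports of $1_B$ and $1_{B'}$ for $B\neq B'\in\mathcal A_{n+1}$ make this identification unambiguous.
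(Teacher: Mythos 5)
Your proof is correct, and it takes a genuinely different route from the paper's. The paper works "bottom up": it fixes the known tensor ONB $\{d_S\otimes d_T\}$ from Lemma \ref{lem:orthprod} and then, via the identities (\ref{eq:id1})--(\ref{eq:id2}) and a four-way case split on the levels of $S$ and $T$, expands each $d_S\otimes d_T$ as a finite linear combination of elements of the three groups (\ref{eq:basis1})--(\ref{eq:basis3}); completeness of $Z$ then follows because the linear span of $Z$ already contains a known complete set. You instead argue "top down" by exhibiting the multiresolution ladder $V_0\subset V_{-1}\subset\cdots$, writing $L^2(X)=V_0\oplus\bigoplus_{n\leq -1}W_n$ with $W_n=V_n\ominus V_{n+1}$, tensoring, and regrouping to obtain
\[
L^2(X\times X)=(V_0\otimes V_0)\oplus\bigoplus_{n\le -1}\bigl[(W_n\otimes W_n)\oplus(W_n\otimes V_{n+1})\oplus(V_{n+1}\otimes W_n)\bigr],
\]
after which you identify the three $\varepsilon$-blocks of $Z$ at each level $n$ with ONBs of the three summands via $\{d_Q:\lev Q=n\}$ being an ONB of $W_n$ and $\{1_A/\sqrt{\mu(A)}:A\in\mathcal{A}_{n+1}\}$ being an ONB of $V_{n+1}$. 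Both arguments are sound; yours makes the "isotropic" structure of $Z$ transparent and handles orthogonality and completeness in a single stroke, whereas the paper's proof is more elementary but requires the recursive indicator expansions. Two points you correctly flag but should keep in mind as the only places where care is needed: the map $(Q,R)\mapsto d_Q\otimes d_R^{(0)}$ is not injective (several $R$ with the same $\parent R$ collapse), which is harmless because $Z$ is a set; and if some $B\in\mathcal{A}_{n+1}$ has $\mathcal{E}(B)=\emptyset$, then $1_B/\sqrt{\mu(B)}$ does not arise as $d_R^{(0)}$ at level $n$, but since then $1_B$ coincides a.e.\ with the indicator of its unique child, the span of $Z$ is unaffected, so completeness still holds.
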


\begin{proof}
Since the verification of orthonormality is a straightforward calculation, we proceed with showing the basis property$\mathbb{.}$ Since
we know from Lemma \ref{lem:orthprod} that the set%
\[
\{d_{S}\otimes d_{T}:S\in\mathcal{A}_{m+1},T\in\mathcal{A}_{n+1}%
,n,m\in-\mathbb{N}\}\quad\text{with\quad}d_{X}:=1_{X}%
\]
is an orthogonal basis in $L^{2}(X\times X),$ we have to show that each
$d_{S}\otimes d_{T}$ can be decomposed in a finite linear combination of
functions of the form $(\ref{eq:basis1})-(\ref{eq:basis3}).$ To do that, we
need the following identities:%
\begin{align}
1_{U}  & =1_{A^{\ast}(U)}+\sum_{V\in\mathcal{E}(U)}1_{V},\quad U\in
\mathcal{A}_{m+1},\label{eq:id1}\\
d_{R}  & =c_{1}1_{A^{\ast}(B)}+\sum_{V\in\mathcal{E}(B)}c_{V}1_{V},\quad
R\in\mathcal{E}(B),c_{1},c_{V}\in\mathbb{R} \text{ for }V\in\mathcal{E}(B).\label{eq:id2}
\end{align}
We then have four cases:
\begin{enumerate}
\item Let $d_{S}=1_{X},$ $d_{T}=1_{X},$ then clearly $d_{S}\otimes d_{T}$ $\in
Z.$

\item $d_{S}=1_{X},$ $T\in\mathcal{A}_{n},n\in-\mathbb{N,}$ $B\in
\mathcal{A}_{n+1}$ with $T\in\mathcal{E}(B).$ Then we get recursively from
$(\ref{eq:id1}),$ that $1_{X}$ is a finite linear combination of functions of
the form $1_{C},$ where $C\in\mathcal{A}_{n+1}.$ With $(\ref{eq:basis3}),$ we
see that $1_{X}\otimes d_{T}\in\operatorname*{lin}Z.$

\item Analogously, we treat the case $d_{T}=1_{X}$ and $d_{S}\neq1_{X}.$

\item $S\in\mathcal{A}_{n},T\in\mathcal{A}_{m},m,n\in-\mathbb{N,}$
$S\in\mathcal{E}(A),T\in\mathcal{E}(B),A\in\mathcal{A}_{n+1},B\in
\mathcal{A}_{m+1}.$ If $m=n,$ we see from $(\ref{eq:basis1})$ that
$\ d_{S}\otimes d_{T}\in Z.$ Without loss of generality we now assume that
$m>n$ and we decompose $d_{T}$ in the form $(\ref{eq:id2}).$ Additionally, if
$m>n+1,$ we proceed recursively with $(\ref{eq:id1})$ and get from
$(\ref{eq:basis2})$\ that $d_{S}\otimes d_{T}\in\operatorname*{lin}Z.$ \qedhere
\end{enumerate}
\end{proof}

\subsection{Dyadic Annuli}\label{sec:dyadicannuli}

Recall that $\mathcal{A}_{n}$ is the set of dyadic cubes of level $n$ for $n\in -\mathbb{N}_0$ and $\mathcal{A}_0$ consists only of the whole space $X$ and the size of cubes \emph{decreases} with \emph{decreasing} index $n$.
We now introduce the set of all pairs of dyadic cubes of the same level%
\[
\mathcal{C}:=\{(A,B):A,B\in\mathcal{A}_{n},n\in -\mathbb{N}_0\}%
\]
and its decomposition into annuli $\mathcal{C=}
{\displaystyle\bigcup\limits_{m=0}^{\infty}}
\mathcal{C}_{m},$
where
\[
\mathcal{C}_{m}=\{(A,B)\in\mathcal{C}:q^{m-1+\operatorname*{lev}A}\leq
d(A,B)<q^{m+\operatorname*{lev}A}\}\quad\text{for }m\in\mathbb{N}%
\]
and%
\[
\mathcal{C}_{0}=\{(A,B)\in\mathcal{C}:d(A,B)<q^{\operatorname*{lev}A}\}.
\]
Recall that $\lev A$ denotes the level of $A$ (that is if $A\in\mathcal{A}_n$, then $\lev A=n$) and $q$ is the constant from Theorem \ref{th:dyadic} that determines the growth factor of the cubes in each level. This definition can be interpreted in
the following way: Given $A\in\mathcal{A}_{n},$ we draw an annulus around $A$
with inner radius $q^{m-1+\operatorname*{lev}A}$ and outer radius
$q^{m+\operatorname*{lev}A}$ and take all pairs $(A,B)$ such that $B$ has no
point inside the smaller circle and $B$ has at least one point inside the
larger circle. It is crucial that the annulus grows with the size of $A.$

%
%
%
%

\subsection{Extracting Rearrangements - Further Decomposition of Annuli}
The aim of this section is to extract (as few as possible) subcollections
$\mathcal{C}_{m,i}$ from $\mathcal{C}_{m}$ such that for each $(A,B)\in
\mathcal{C}_{m,i}$ we have that $B$ is uniquely determined by $A$ and $A\ $is
uniquely determined by $B.$ The benefit of this decomposition is that on
$\mathcal{C}_{m,i}$ we can define an injective mapping $\tau$ such that
$B=\tau(A)$ (see Definition \ref{def:rearr}). We start with the following observation:

\begin{lemma}
\label{lem:max}There exists a constant $M_{0}$ independent of $n$
and $m$, such that for $A\in\mathcal{A}_{n}$ there are at most
$M_{0}q^{m}$ elements $B\in\mathcal{A}_{n}$ with $(A,B)\in\mathcal{C}_{m}.$
\end{lemma}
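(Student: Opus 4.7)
The plan is a measure packing argument: show that all admissible cubes $B$ fit inside an enlarged ball around $A$ of radius comparable to $q^{m+n}$, give each such $B$ a uniform lower measure bound, and then invoke disjointness to count.

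First I would locate the admissible cubes. For $(A,B)\in\mathcal{C}_m$ with $A\in\mathcal{A}_n$, the definitions of $\mathcal{C}_0$ and $\mathcal{C}_m$ (for $m\geq 1$) together yield $d(A,B)<q^{m+n}$. Combining this with the outer bound $A\subseteq B(z_\alpha^n,c_2q^n)$ from Theorem~\ref{th:dyadic}(4) and the quasimetric triangle inequality, any $b\in B$ satisfies $d(b,z_\alpha^n)\leq K(q^{m+n}+c_2q^n)\leq C_1q^{m+n}$ with $C_1:=K(1+c_2)$ (using $q>1$, so $q^n\leq q^{m+n}$). Hence every admissible $B$ lies inside $B(z_\alpha^n,C_1q^{m+n})$.

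Next I would produce a uniform lower bound on the measure of each candidate. The inner containment $B(z_\beta^n,c_1q^n)\subseteq B$ from Theorem~\ref{th:dyadic}(4) together with normality gives $\mu(B)\geq b_1c_1q^n$; the range restriction $c_1q^n<\mu(X)=1$ that normality demands holds for every $n\leq 0$ after shrinking $c_1$ if necessary (which leaves the inner containment valid). Since cubes in $\mathcal{A}_n$ are pairwise disjoint by Theorem~\ref{th:dyadic}(2), summation yields
\[
\#\{B\in\mathcal{A}_n:(A,B)\in\mathcal{C}_m\}\cdot b_1c_1q^n \;\leq\; \mu\bigl(B(z_\alpha^n,C_1q^{m+n})\bigr).
\]

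To close the argument I would upper-bound the right-hand side by a constant times $q^{m+n}$. If $C_1q^{m+n}<1$, normality yields $\mu(B(z_\alpha^n,C_1q^{m+n}))\leq b_2C_1q^{m+n}$; otherwise the trivial bound $\mu(X)=1\leq C_1q^{m+n}$ applies. In either regime the right-hand side is at most $\max(b_2,1)C_1q^{m+n}$, so dividing by $b_1c_1q^n$ gives the cardinality bound $M_0q^m$ with $M_0:=\max(b_2,1)C_1/(b_1c_1)$, which is independent of $n$ and $m$. The only mild subtlety is this case split forced by the range restriction in the normality upper bound; in the complementary regime the trivial volume bound on $\mu(X)$ is already sharp enough to close the argument.
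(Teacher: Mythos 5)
Your packing argument is exactly the proof the paper intends (the paper only gestures at it, citing Theorem~\ref{th:dyadic} and normality of $X$): localize the candidate cubes inside a ball of radius $\approx q^{m+n}$, lower-bound each cube's measure via normality and the inner ball of Theorem~\ref{th:dyadic}(4), and count by disjointness. One small imprecision: your $C_1=K(1+c_2)$ is too small. Passing from ``there exist $a_0\in A$, $b_0\in B$ with $d(a_0,b_0)<q^{m+n}$'' to a bound on $d(b,z_\alpha^n)$ for \emph{every} $b\in B$ requires also controlling $\operatorname{diam}(B)\leq 2Kc_2q^n$ and iterating the quasi-triangle inequality twice, which picks up extra factors of $K$ and another $c_2$ term, e.g. $d(b,z_\alpha^n)\leq K^2(1+3c_2)q^{m+n}$. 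Since all that matters is that $C_1$ is a finite constant depending only on $K$ and $c_2$, this does not affect the validity of the argument; the rest (shrinking $c_1$ to fit the normality window, and the two-regime bound on $\mu(B(z_\alpha^n,C_1q^{m+n}))$) is handled correctly.
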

So, roughly speaking, in an annulus of level $m$ around $A$, there are at most $q^m$ cubes of the same size as $A$.
This lemma is easily proved using the properties of dyadic cubes in Theorem \ref{th:dyadic} and the normality of $X$.

\begin{remark}
The same argument shows that for each $C>0$ there exists a constant
$M_{0}$ s.t. for $A\in\mathcal{A}_{n}$ we have at most $M_{0}$ elements
$B\in\mathcal{A}_{n}$ with%
\[
d(A,B)\leq Cq^{n}.\vspace{-0.5cm}
\]
\end{remark}

\begin{proposition}
\label{prop:boundednumber}Let $M_{1}:=2M_0$ with $M_0$ from Lemma \ref{lem:max}. Then we have for all $m\in\mathbb{N}_{0}$ that the collection $\mathcal{C}_{m}\subseteq\mathcal{A}%
\times\mathcal{A}$ admits a decomposition as%
\[
\mathcal{C}_{m}=\mathcal{C}_{m,1}\cup\cdots\cup\mathcal{C}_{m,M_1 q^m}
\]
so that each of the collections $\mathcal{C}_{m,i},$ $1\leq i\leq M_1 q^m$
satisfies the two conditions
\begin{enumerate}
\item For $B\in\mathcal{A}$ there exists at most one $A\in\mathcal{A}$ with
$(A,B)\in\mathcal{C}_{m,i}.$

\item For $A\in\mathcal{A}$ there exists at most one $B\in\mathcal{A}$ with
$(A,B)\in\mathcal{C}_{m,i}.$
\end{enumerate}
\end{proposition}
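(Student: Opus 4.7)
I would recast the statement as a proper vertex coloring of an auxiliary ``conflict graph'' $H$ whose vertex set is $\mathcal{C}_m$ and in which two distinct pairs $(A,B),(A',B')$ are adjacent iff $A=A'$ or $B=B'$. Any proper coloring of $H$ with $M_1 q^m$ colors yields the desired decomposition by declaring $\mathcal{C}_{m,i}$ to be the $i$-th color class: two pairs sharing a first (resp.\ second) coordinate would then have to land in distinct classes, which is precisely condition 2 (resp.\ condition 1) of the proposition.

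The key combinatorial input is a symmetric version of Lemma \ref{lem:max}. Since $(A,B)\in\mathcal{C}_m$ forces $\lev A=\lev B$ and $d$ is symmetric, we have $(A,B)\in\mathcal{C}_m\iff(B,A)\in\mathcal{C}_m$. Applying Lemma \ref{lem:max} in the second slot therefore also yields: for each fixed $B$, at most $M_0 q^m$ cubes $A$ satisfy $(A,B)\in\mathcal{C}_m$. Together with the original bound this shows that every vertex of $H$ has at most $(M_0 q^m-1)+(M_0 q^m-1)=2M_0 q^m-2$ neighbors.

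Finally, since $\mathcal{A}$ is countable (Point 6 of Theorem \ref{th:dyadic}), so is $\mathcal{C}_m$. I enumerate its elements as $p_1,p_2,\ldots$ and greedily assign to each $p_j$ the smallest color in $\{1,\ldots,M_1 q^m\}$ not already used on a previously colored neighbor of $p_j$ in $H$. This always succeeds, because the forbidden set at each step has size at most $2M_0 q^m-2<2M_0 q^m=M_1 q^m$. The main obstacle is conceptual rather than technical: one has to identify the right graph-theoretic reformulation and then verify that Lemma \ref{lem:max} genuinely applies in both coordinate slots. Once these are in place, the greedy coloring step is routine, and the choice $M_1=2M_0$ is exactly what is needed to absorb the two-sided degree bound.
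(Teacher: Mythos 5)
Your proposal is correct and is essentially the paper's own argument in graph-coloring language: the paper also assigns one of $M_1 q^m$ indices to each pair of $\mathcal{C}_m$ by a greedy (double) induction, excluding at each step the at most $M_0 q^m$ indices conflicting in the first coordinate and the at most $M_0 q^m$ conflicting in the second (the latter via the same symmetry observation $V(R)\subseteq\mathcal{O}_m(R)$ that you use), which is exactly your degree bound and the reason for $M_1=2M_0$. Your conflict-graph formulation with a greedy proper coloring is just a cleaner repackaging of that bookkeeping.
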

\begin{remark}
For the applications in Section \ref{sec:main} it is important that $\mathcal{C}_m$ is decomposed in $M_1 q^m$ subcollections (and not more). For instance the estimate $q^{2m}$ would be much simpler to obtain, but would not allow us to treat singular integral operators.
\end{remark}

\begin{proof}
\begin{description}
\item[Step 1:] Idea of the proof:\newline Let $Q\in\mathcal{A}$. Then we
define the ring collection of $Q:$%
\[
\mathcal{O}_{m}(Q):=\{R\in\mathcal{A}:(Q,R)\in\mathcal{C}_{m}\}.
\]
We will show that there exists $I(Q)\subseteq\{1,\ldots,M_{1}q^{m}\}=:I$ and
an enumeration of the dyadic cubes in $\mathcal{O}_{m}(Q)$ such that%
\[
\mathcal{O}_{m}(Q)=\{R_{i}(Q):i\in I(Q)\}
\]
and we have the following property:%
\begin{equation}
\forall Q,Q^{\prime}\in\mathcal{A},~Q\neq Q^{\prime}~\forall j\in I(Q)\cap I(Q^{\prime
}):R_{j}(Q)\neq R_{j}(Q^{\prime}).\label{eq:zerl2}%
\end{equation}
Then we can define the decomposition%
\[
\mathcal{A}_{m,i}=\{Q\in\mathcal{A}:i\in I(Q)\}\quad\text{and}\quad \mathcal{C}_{m,i}=\{(Q,R_{i}(Q)):Q\in\mathcal{A}_{m,i}\}.
\]
We thus obtain
\[
\mathcal{C}_{m}=\mathcal{C}_{m,1}\cup\cdots\cup\mathcal{C}_{m,M_1 q^m}%
\]
and the desired properties hold.

\item[Step 2:] Construction of the enumeration:\newline Let $\mathcal{A}%
=\{Q^{(k)}:k\in\mathbb{N\}}$ be an enumeration of all dyadic cubes$\mathbb{.}$
We proceed by induction over $k.$ For $k=1$ choose $I(Q^{(1)})=\{1,\ldots
,|\mathcal{O}_{m}(Q^{(1)})|\}$ and select any enumeration of the
cubes$\mathcal{\ O}_{m}(Q^{(1)}).$ Observe that with Lemma \ref{lem:max} we
have that $|\mathcal{O}_{m}(Q^{(1)})|\leq M_{0}q^{m}.$ Now let $k\in
\mathbb{N}$ and assume we have constructed%
\[
I(Q^{(1)}),\ldots,I(Q^{(k)})
\]
with%
\[
\mathcal{O}_{m}(Q^{(l)})=\{R_{i}(Q^{(l)}):i\in I(Q^{(l)})\}\text{ for }l\leq k
\]
such that the following holds%
\[
\forall Q,Q^{\prime}\in\{Q^{(1)},\ldots,Q^{(k)}\},~Q\neq Q^{\prime}~\forall j\in
I(Q)\cap I(Q^{\prime}):R_{j}(Q)\neq R_{j}(Q^{\prime}).
\]
We will now construct $I(Q^{(k+1)}).$ To do this we first set%
\[
\{R^{(1)},\ldots,R^{(M_{\ast})}\}=\mathcal{O}_{m}(Q^{(k+1)}),\quad\text{where
}M_{\ast}\leq M_{0}q^{m}.
\]

\item[Step 2a:] We start a second induction and begin with $R^{(1)}.$ We will
define the index $\operatorname*{ind}_{Q^{(k+1)}}R^{(1)}$ of $R^{(1)}$ in the
enumeration $\mathcal{O}_{m}(Q^{(k+1)})$ as follows. We put%
\[
V(R^{(1)})=\{Q^{\prime}\in\{Q^{(1)},\ldots,Q^{(k)}\}:R^{(1)}\in\mathcal{O}%
_{m}(Q^{\prime})\},
\]
so $V(R^{(1)})$ contains the cubes $Q^{\prime}$ for which $R^{(1)}$ is in
their ring collection $\mathcal{O}_{m}(Q^{\prime}).$ Now, since $V(R^{(1)}%
)\subseteq\mathcal{O}_{m}(R^{(1)}),$ we have an estimate for the cardinality
of $V(R^{(1)}):$%
\begin{equation}
|V(R^{(1)})|\leq M_{0}q^{m}.\label{eq:zerl1}%
\end{equation}
For $Q^{\prime}\in V(R^{(1)})$ we already defined the indices
$\operatorname*{ind}_{Q^{\prime}}R^{(1)}\in I.$ Next we let%
\[
L(R^{(1)})=\{\operatorname*{ind}_{Q^{\prime}}R^{(1)}:Q^{\prime}\in
V(R^{(1)})\}
\]
the indices of $R^{(1)}$ in the enumeration of $Q^{\prime}.$ According to
$(\ref{eq:zerl1})$, we have%
\[
|L(R^{(1)})|\leq M_{0}q^{m}%
\]
and $|I|=M_{1}q^m.$ For the reduced index set, defined as%
\[
I^{\text{red}}=I\backslash L(R^{(1)}),
\]
we have%
\[
|I^{\text{red}}|\geq M_{1}q^{m}-M_{0}q^{m}.
\]
In particular, we have $I^{\text{red}}%
\neq\emptyset.$ So we select any element in $I^{\text{red}}$ to be the index
of $R^{(1)}$ for $Q^{(k+1)}:$%
\[
\operatorname*{ind}_{Q^{(k+1)}}R^{(1)}\in I^{\text{red}}.
\]
Thus the beginning of the second induction is completed.

\item[Step 2b:] Next we fix $j<M_{\ast}\leq M_{0}q^{m}.$ We now assume that we
already defined%
\[
\operatorname*{ind}_{Q^{(k+1)}}R^{(1)},\ldots,\operatorname*{ind}_{Q^{(k+1)}%
}R^{(j)},
\]
so we pick $R^{(j+1)}\in\mathcal{O}_{m}(Q^{(k+1)}).$ As in the beginning of
the induction, we set
\[
V(R^{(j+1)})=\{Q^{\prime}\in\{Q^{(1)},\ldots,Q^{(k)}\}:R^{(j+1)}\in
\mathcal{O}_{m}(Q^{\prime})\}.
\]
We again have $V(R^{(j+1)})\subseteq\mathcal{O}_{m}(R^{(j+1)})$ and thus an
estimate for the cardinality%
\[
|V(R^{(j+1)})|\leq M_{0}q^{m}.
\]
Next let%
\[
L(R^{(j+1)})=\{\operatorname*{ind}_{Q^{\prime}}R^{(j+1)}:Q^{\prime}\in
V(R^{(j+1)})\}
\]
be the indices of $R^{(j+1)}$ in the enumeration of $Q^{\prime}.$ Since
$|L(R^{(j+1)})|\leq M_{0}q^{m},$ we have for the reduced index set%
\[
I^{\text{red}}=I\backslash(L(R^{(j+1)})\cup\{\operatorname*{ind}_{Q^{(k+1)}%
}R^{(1)},\dots,\operatorname*{ind}_{Q^{(k+1)}}R^{(j)}\})
\]
an estimate for the cardinality%
\[
|I^{\text{red}}|> M_{1}q^{m}-M_{0}q^{m}-M_{\ast}\geq(M_{1}-2M_{0})q^{m},
\]
so we have due to the definition of $M_1$ that $I^{\text{red}}\neq\emptyset.$ We
finally select then the index $\operatorname*{ind}_{Q^{(k+1)}}R^{(j+1)}$ to be
any element from the reduced index set $I^{\text{red}}.$

\item[Step 3:] We summarize and set%
\[
R_{i}(Q^{(k+1)})=R^{(j)}\quad\text{iff\quad}i=\operatorname*{ind}_{Q^{(k+1)}%
}R^{(j)}%
\]
and the index set%
\[
I(Q^{(k+1)})=\{\operatorname*{ind}_{Q^{(k+1)}}(R^{(j)}):R^{(j)}\in
\mathcal{O}_{m}(Q^{(k+1)})\}.
\]
It follows from the construction step 2 that the enumeration $R$ and the index
sets $I(Q^{(k)})$ have the desired property $(\ref{eq:zerl2}).$\qedhere
\end{description}
\end{proof}
For $1\leq i\leq M_1 q^m$ we recall the meaning of $\mathcal{A}_{m,i}%
\subseteq\mathcal{A,}$ which was defined in the previous proof, as%
\[
\mathcal{A}_{m,i}=\{A\in\mathcal{A}:\exists B\in\mathcal{A},\text{ such that
}(A,B)\in\mathcal{C}_{m,i}\}.
\]
Due to Proposition \ref{prop:boundednumber}, we can define an injective
mapping $\tau$ on $\mathcal{A}_{m,i}:$

\begin{definition}
\label{def:rearr}We define%
\begin{align*}
\tau & :\mathcal{A}_{m,i}\rightarrow\mathcal{A}\\
& A\mapsto\tau(A)
\end{align*}
through the relation%
\[
\tau(A)=B\quad\text{iff\quad}(A,B)\in\mathcal{C}_{m,i}.
\]
Additionally we get an inverse of $\tau$ on $\tau(\mathcal{A}_{m,i})$%
\[
\tau^{-1}(B)=A\quad\text{iff\quad}(A,B)\in\mathcal{C}_{m,i}.
\]
\end{definition}

\subsection{Decomposition of $\mathcal{C}_{m,i}$ using Arithmetic Progressions\label{sec:sep}}
\begin{proposition}
\label{prop:deccmi}For all $C>0$ there is a constant $M$ that depends only on $C$ and the space of homogeneous type $X$ such that we have the decomposition%
\[
\mathcal{C}_{m,i}=\mathcal{G}_{1}\cup\cdots\cup\mathcal{G}_{M},
\]
with the property that for all $1\leq l\leq M,$ $n\in
-\mathbb{N},$ and all disjoint $A_1,A_2$ in $\mathcal{A}_n$ with
\[
(A_{1},\tau(A_{1}))\in\mathcal{G}_{l}\quad\text{and\quad}(A_{2},\tau(A_{2}))\in
\mathcal{G}_{l}%
\]
the following separation of these sets holds:%
\begin{equation}
d(\tau^i(A_{1}),\tau^j(A_{2}))>Cq^{n}\quad\text{for all }i,j\in\{0,1\}.\label{eq:sep}%
\end{equation}
Here, $\tau^0(A):=A$ and $\tau^1(A):=\tau(A)$.
\end{proposition}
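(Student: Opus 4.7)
My plan is to set up a conflict graph on $\mathcal{C}_{m,i}$ and obtain the decomposition via a greedy coloring whose palette size is controlled by the maximum degree. The essential input is the Remark following Lemma \ref{lem:max}: for every $C>0$ there is a constant $M_0=M_0(C,X)$ such that for every $n\in -\mathbb{N}$ and every $A\in\mathcal{A}_n$, the set $\{B\in\mathcal{A}_n:d(A,B)\le Cq^n\}$ has cardinality at most $M_0$. Together with the injectivity of $\tau$ (Definition \ref{def:rearr}), this suffices.

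First I would fix a level $n\in -\mathbb{N}$ and restrict attention to $\mathcal{C}_{m,i}^{(n)}:=\{(A,\tau(A))\in\mathcal{C}_{m,i}:A\in\mathcal{A}_n\}$. Declare two distinct pairs $(A_1,\tau(A_1))$ and $(A_2,\tau(A_2))$ in $\mathcal{C}_{m,i}^{(n)}$ \emph{conflicting} if there exist $s,t\in\{0,1\}$ with $d(\tau^s(A_1),\tau^t(A_2))\le Cq^n$, and let $G^{(n)}$ be the resulting graph. The key step is to bound the degree of $G^{(n)}$ by $4M_0$. Fixing a pair $P=(A,\tau(A))$, any conflicting pair $(A',\tau(A'))$ falls into one of the four cases
\[
d(A,A')\le Cq^n,\qquad d(\tau(A),A')\le Cq^n,\qquad d(A,\tau(A'))\le Cq^n,\qquad d(\tau(A),\tau(A'))\le Cq^n.
\]
The first and second cases each admit at most $M_0$ choices of $A'$ directly by the Remark. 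For the third and fourth cases, the Remark supplies at most $M_0$ candidates for $\tau(A')$, and since $\tau:\mathcal{A}_{m,i}\to\mathcal{A}$ is injective, each such $\tau(A')$ determines $A'$ uniquely; hence at most $M_0$ choices of $A'$ for each of these two cases as well. Summing yields $\deg_{G^{(n)}}(P)\le 4M_0$.

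Next I would apply the standard greedy coloring to $G^{(n)}$, producing a proper coloring with at most $M:=4M_0+1$ colors. Crucially the conflict relation only compares pairs whose first components lie in the \emph{same} $\mathcal{A}_n$, so there are no edges between different levels; consequently I can color each $G^{(n)}$ independently using a common palette $\{1,\dots,M\}$. Letting $\mathcal{G}_l$ consist of all pairs, across all levels, that receive color $l$, I obtain the decomposition $\mathcal{C}_{m,i}=\mathcal{G}_1\cup\cdots\cup\mathcal{G}_M$. Within any $\mathcal{G}_l$ and any level $n$, two distinct pairs $(A_1,\tau(A_1))$ and $(A_2,\tau(A_2))$ at level $n$ cannot conflict, which is precisely the required separation (\ref{eq:sep}); note that distinctness in $\mathcal{A}_n$ coincides with disjointness by the nesting property of dyadic cubes.

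I do not anticipate any serious obstacle here. The only subtlety is to exploit the injectivity of $\tau$ when bounding the contribution of the $\tau(A')$-type conditions to the degree; without this, one would only know that $\tau(A')$ is close to $A$ or $\tau(A)$, which a priori does not bound the number of $A'$. Once the degree bound is in hand, the greedy coloring is routine, and the absence of cross-level conflicts allows levels to be processed in parallel using a single palette of size $M=4M_0+1$ depending only on $C$ and $X$.
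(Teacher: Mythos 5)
Your proof is correct and follows essentially the same route as the paper's: the paper's argument is precisely a greedy first-fit coloring of the same conflict structure, with the palette size bounded by the same four-case degree estimate (each case controlled by the Remark after Lemma \ref{lem:max}, the two $\tau(A')$-type cases additionally using the injectivity of $\tau$ on $\mathcal{A}_{m,i}$). The explicit graph-theoretic framing and the level-by-level parallelism with a shared palette are presentational differences, not a different approach.
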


\begin{proof}
Let $\{(Q^{(k)},\tau(Q^{(k)})):k\in\mathbb{N\}}$ be an enumeration of $\mathcal{C}%
_{m,i}.$ Initialize the collections $\mathcal{G}_{1},\ldots,\mathcal{G}_{M}$ as
empty. For $k\in\mathbb{N}$, we inductively add $(Q^{(k)},\tau(Q^{(k)}))$ to $\mathcal{G}_{r}$ for%
\[
r:=\min\{i\in\mathbb{N}:\text{ for all }(A_1,\tau(A_1))\in\mathcal{G}_{i}\text{ we have
}(\ref{eq:sep}) \text{ with }A_2\text{ replaced by }Q^{(k)}\}.
\]
Thus for $(A,\tau(A))\in\mathcal{G}_{L+1}$ we have  $(A,\tau(A))\notin\mathcal{G}_{l}$ for all $l\leq L$
and so we have that for all $l\leq L$ there exists a pair $(A_{l}^{0},\tau(A_{l}^{0}))\in
\mathcal{G}_{l}$ such that one of the four expressions
\[
d(A,A_l^0),\quad d(A,\tau(A_{l}^{0})),\quad d(\tau(A),A_l^0),\quad d(\tau(A),\tau(A_l^0))
\]
is $\leq Cq^n$. According to the properties of the collection $\mathcal{C}_{m,i}$, the sets in the collection $\{A_l^0\}_{l=1}^L$ as well as the sets $\{\tau(A_l^0)\}_{l=1}^L$ are disjoint. So the remark after Lemma \ref{lem:max} yields that $L$ can't be greater than $4(M_0+1)$ with $M_0$ depending only on $C$ and on the space of homogeneous type $X$. This proves the proposition.
\end{proof}

We cannot guarantee that a dyadic $A$ cube divides into $N(A)\geq 2$ subcubes, but nevertheless we have as a consequence of the normality of $X$:
\begin{lemma}
\label{lem:punktmass}There exists a constant $L$ such that for every $l\geq L
$ we have that $A\in\mathcal{A}_{n}$, $B\in\mathcal{A}_{n-l}$ imply that $A\neq B$.
\end{lemma}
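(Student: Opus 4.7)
The plan is to compare $\mu(A)$ and $\mu(B)$ using the normality of $X$ and deduce that for $l$ large enough the measures cannot agree, so that $A$ and $B$ cannot be the same set.

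First, by point 4 of Theorem \ref{th:dyadic}, every $A \in \mathcal{A}_n$ satisfies
\[
B(z_\alpha^n, c_1 q^n) \subseteq A \subseteq B(z_\alpha^n, c_2 q^n),
\]
and analogously every $B \in \mathcal{A}_{n-l}$ satisfies $B \subseteq B(z_\beta^{n-l}, c_2 q^{n-l})$. Since $\mu(\{x\}) = 0$ for all $x$ and $\mu(X) = 1$, the normality hypothesis applies on the range $0 < r < 1$. Thus, as long as $c_1 q^n < 1$ and $c_2 q^{n-l} < 1$, one gets
\[
\mu(A) \geq b_1 c_1 q^n \quad\text{and}\quad \mu(B) \leq b_2 c_2 q^{n-l}.
\]

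Next, choose $L_0$ to be any integer strictly greater than $\log_q\!\bigl(b_2 c_2/(b_1 c_1)\bigr)$. For all $l \geq L_0$ one has $b_2 c_2 q^{-l} < b_1 c_1$, hence $\mu(B) < \mu(A)$ whenever both normality bounds are available, which forces $A \neq B$ as sets.

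It remains to handle the finitely many top levels $n$ (those where $c_1 q^n$ or $c_2 q^n$ exceeds $1$, which can only occur in a bounded range $n \in \{-n_0,\ldots,0\}$). In each such case $\mu(A)$ is bounded below by some fixed positive constant $c_n$: either $\mu(A) = \mu(X) = 1$ when $A = X$, or $\mu(A) \geq \mu(B(z_\alpha^n, r))$ for a small enough radius $r$ at which normality is valid. On the other hand $\mu(B) \leq b_2 c_2 q^{n-l} \to 0$ as $l \to \infty$, uniformly in $n$ in this finite range. Enlarging $L_0$ if necessary so that $b_2 c_2 q^{n-L} < \min_{-n_0 \leq n \leq 0} c_n$ gives the desired $L$. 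The main obstacle is only this bookkeeping at the boundary of the normality range; the substantive content is the comparison of the geometric scales $q^n$ versus $q^{n-l}$.
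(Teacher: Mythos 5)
Your proof is correct and is exactly the argument the paper intends: the paper states this lemma "as a consequence of the normality of $X$" without spelling out a proof, and your measure comparison $\mu(B)\leq b_2c_2q^{n-l} < b_1c_1q^n\leq\mu(A)$ (with the bookkeeping at levels where the inner-ball radius $c_1q^n$ exceeds $\mu(X)=1$) is precisely that consequence. Only one small slip: in the edge-case enumeration, the relevant condition is $c_1q^n\geq 1$, not ``$c_1q^n$ or $c_2q^n$''; the quantity attached to $B$ is $c_2q^{n-l}$, which is automatically $<1$ once $l$ exceeds $\log_q c_2$, so there is no separate edge case for $B$.
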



We now fix $\mathcal{G}=\mathcal{G}_{l}$ in the decomposition of Proposition
\ref{prop:deccmi} for some $l\leq M$ and introduce levels using arithmetic
progressions. We set%
\[
\mathcal{A}_{\mathcal{G}}:=\{A\in\mathcal{A}_{m,i}:(A,\tau(A))\in\mathcal{G}\}
\]
and the levels%
\begin{equation}
\mathcal{L}_{r}=\mathcal{A}_{\mathcal{G}}\cap%
{\displaystyle\bigcup\limits_{l=0}^{\infty}}
\mathcal{A}_{-l\cdot L(m+1)-r},\quad\text{where }0\leq r\leq
L(m+1)-1\label{eq:defL}%
\end{equation}
and $L\in\mathbb{N}$ is chosen in such a way that the condition of Lemma
\ref{lem:punktmass} is satisfied. We will later (in Section \ref{sec:step4}) give additional conditions on
the constant $L.$ Given a set $A\in\mathcal{L}_{r}$ we now define appropriate
predecessors. 
\begin{definition}
If $A\in\mathcal{A}_{-l\cdot L(m+1)-r},$ we define the \emph{arithmetic
predecessor}%
\begin{equation}
\widetilde{A}\label{eq:arithpre}
\end{equation}
to be the unique element in $\mathcal{A}_{-(l-1)\cdot L(m+1)-r},$ such that
$\widetilde{A}\supset A.$
\end{definition}
This works only if $l\geq1.$ If $l=0,$ we simply set
$\widetilde{A}:=X.$ We remark that for $A\in\mathcal{L}_{r}$ we have obviously
\[
\widetilde{A}\in{\displaystyle\bigcup\limits_{l=0}^{\infty}}
\mathcal{A}_{-l\cdot L(m+1)-r}
\quad\text{or}\quad \widetilde{A}=X,
\]
but not necessarily that $\widetilde{A}\in\mathcal{A}_{\mathcal{G}}$, hence  $\widetilde{A}$ need not be in $\mathcal{L}_{r}.$ \\
Note that the dyadic predecessor of $A$, denoted $\parent A$, defined in $(\ref{eq:defparent})$ does not coincide with the arithmetic predecessor $\widetilde{A}$ defined above.

\begin{definition}
Let $\mathcal{Z}$ be a collection of sets. $\mathcal{Z}$ is said to be \emph{nested}, if for all $A,B\in\mathcal{Z}$ we have that either
\[
A\cap B=\emptyset\quad\text{or}\quad A\subseteq{B}\quad\text{or}\quad B\subseteq A
\]
holds.
\end{definition}

The main result of this section is the following combinatorial theorem. It is the foundation of our work in the subsequent sections. It translates into norm estimates for rearrangement and shift operators in Section \ref{sec:rearr}. The significance of Theorem \ref{th:zerl} can be seen by examining the proof of T. Figiel \cite{Figiel1988}. To anticipate the notation used in the following theorem, we note that $\mathcal{H}$ will be the collection of cubes $A$ such that $\tau(A)$ has the same arithmetic predecessor as $A$. $\mathcal{I}$ will be the collection of cubes $A$ such that both $A$ and $\tau(A)$ are well inside their arithmetic predecessors and the collection $\mathcal{J}$ consists of the rest, where we again divide into the cases where either $A$ or $\tau(A)$ or both of them lie near the boundary of their arithmetic predecessors and call the corresponding collections $\mathcal{J}_1,\mathcal{J}_2$ and $\mathcal{J}_3$ respectively.
\begin{theorem}
\label{th:zerl}For $r\leq L(m+1)-1$ and $\mathcal{L}=\mathcal{L}%
_{r}$ defined by (\ref{eq:defL}), then for $\mathcal{L}$ there exists a decomposition%
\[
\mathcal{L}=\mathcal{H}\cup\mathcal{I}\cup\mathcal{J},
\]
such that

\begin{enumerate}
\item The collection
\[
\{A,\tau(A),A\cup\tau(A):A\in\mathcal{H}\}
\]
is nested.

\item $\mathcal{I}$ admits a decomposition as $\mathcal{I}=\mathcal{I}_{1}%
\cup\mathcal{I}_{2},$ so that the two collections%
\[
\{A,\tau(A),A\cup\tau(A):A\in\mathcal{I}_{j}\}\quad\text{for }j\in\{1,2\}
\]
are nested.

\item $\mathcal{J}$ admits a decomposition as $\mathcal{J}=\mathcal{J}_{1}%
\cup\mathcal{J}_{2}\cup\mathcal{J}_{3}$ such that we have

\begin{enumerate}
\item There exists an injection $\gamma_{1}%
:\mathcal{J}_{1}\cup\mathcal{J}_3\rightarrow\mathcal{A}$ such that the collection%
\[
\{A,\gamma_{1}(A),A\cup\gamma_{1}(A):A\in\mathcal{J}_{1}\}
\]
is nested and in addition we have for $A\in\mathcal{J}_{1}$%
\[
\gamma_{1}(A)\subseteq\widetilde{A},\text{\quad}d(\gamma_{1}(A),\com{\widetilde
{A}})\geq q^{\operatorname*{lev}A}\text{ \quad and\quad}d(\tau
(A),\com{\widetilde{\tau(A)}})\geq q^{\operatorname*{lev}A}.
\]

\item There exists an injection $\gamma_{2}%
:\mathcal{J}_{2}\cup\mathcal{J}_3\rightarrow\mathcal{A}$ such that the collection%
\[
\{\tau(A),\gamma_{2}(\tau(A)),\tau(A)\cup\gamma_{2}(\tau(A)):A\in
\mathcal{J}_{2}\}
\]
is nested and in addition we have for $A\in\mathcal{J}_{2}$%
\[
\gamma_{2}(\tau(A))\subseteq\widetilde{\tau(A)}\text{,\quad}d(\gamma_{2}%
(\tau(A)),\com{\widetilde{\tau(A)}})\geq q^{\operatorname*{lev}A}\quad
\text{and\quad}d(A,\com{\widetilde{A}})\geq q^{\operatorname*{lev}A}.
\]

\item For $\mathcal{J}_{3}$ and the injections $\gamma_{1}$ and $\gamma_{2} $
defined in (a) and (b), we have for $A\in\mathcal{J}_{3}$%
\[
d(\gamma_{1}(A),\com{\widetilde{A}})\geq q^{\operatorname*{lev}A}\quad
\text{and}\quad d(\gamma_{2}(\tau(A)),\com{\widetilde{\tau(A)}})\geq
q^{\operatorname*{lev}A}.
\]
Additionally, the two collections%
\[
\{A,\gamma_{1}(A),A\cup\gamma_{1}(A):A\in\mathcal{J}_{3}\}\quad\text{and\quad
}\{\tau(A),\gamma_{2}(\tau(A)),\tau(A)\cup\gamma_{2}(\tau(A)):A\in
\mathcal{J}_{3}\}
\]
are nested.
\end{enumerate}
\end{enumerate}
\end{theorem}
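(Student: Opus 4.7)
The plan is to partition $\mathcal{L}$ according to the geometry of the pair $(A,\tau(A))$ relative to the arithmetic predecessors $\widetilde{A}$ and $\widetilde{\tau(A)}$, and then to verify nesting of each piece using two resources: the strong separation inside each $\mathcal{C}_{m,i}$ provided by Proposition \ref{prop:deccmi} with $C$ chosen sufficiently large, and the freedom to make the arithmetic gap constant $L$ as large as needed (as announced after \eqref{eq:defL}). Concretely, set $\mathcal{H}:=\{A\in\mathcal{L}:\widetilde{\tau(A)}=\widetilde{A}\}$, write $\mathcal{B}(S)$ for the condition $d(S,\com{\widetilde{S}})\geq q^{\lev S}$, define $\mathcal{I}:=\{A\in\mathcal{L}\setminus\mathcal{H}:\mathcal{B}(A)\text{ and }\mathcal{B}(\tau(A))\}$, and split $\mathcal{J}:=\mathcal{L}\setminus(\mathcal{H}\cup\mathcal{I})$ into $\mathcal{J}_1,\mathcal{J}_2,\mathcal{J}_3$ according to whether $\mathcal{B}(A)$ fails alone, $\mathcal{B}(\tau(A))$ fails alone, or both fail, as flagged by the paragraph preceding the statement.

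For $\mathcal{H}$ the bridge $A\cup\tau(A)$ sits inside the dyadic cube $\widetilde{A}$, so nesting of $\{A,\tau(A),A\cup\tau(A):A\in\mathcal{H}\}$ follows from the nesting of $\{\widetilde{A}:A\in\mathcal{H}\}$ together with the fact that at each fixed dyadic level the four cubes $A,A^\prime,\tau(A),\tau(A^\prime)$ of any pair of bridges are pairwise distinct (hence pairwise disjoint), which is exactly what Proposition \ref{prop:deccmi} with $C$ large guarantees. For $\mathcal{I}$ the arithmetic predecessors $\widetilde{A}\neq\widetilde{\tau(A)}$ are disjoint, and the only way two bridges $A\cup\tau(A)$ and $A^\prime\cup\tau(A^\prime)$ can fail to be nested is for the smaller bridge to have exactly one of its endpoints sitting inside $A\cup\tau(A)$; a 2-coloring by the "orientation" of each bridge (for instance, according to which of $\widetilde{A},\widetilde{\tau(A)}$ comes first in a fixed enumeration of the dyadic cubes of the corresponding level) eliminates such mixed configurations and produces the nested subfamilies $\mathcal{I}_1,\mathcal{I}_2$.

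For $\mathcal{J}$ the idea is to replace the boundary-hugging endpoint by a cube sitting well inside the same arithmetic predecessor. Given $A\in\mathcal{J}_1\cup\mathcal{J}_3$, iterate the distinguished-subcube map $A^\ast$ of Definition \ref{def:misc} from $\widetilde{A}$ downward $L(m+1)$ times to obtain a canonical deep-interior cube of $\widetilde{A}$ at dyadic level $\lev A$, then perturb within the interior to an available cube so that the assignment $A\mapsto\gamma_1(A)$ is injective; an entirely parallel construction yields $\gamma_2$ on $\mathcal{J}_2\cup\mathcal{J}_3$. Theorem \ref{th:dyadic}(5) gives $\mu(\partial_t\widetilde{A})\leq c_3 t^\eta\mu(\widetilde{A})$, which for $t=q^{-L(m+1)}$ shows that the level-$\lev A$ cubes lying in the $q^{\lev A}$-boundary layer of $\widetilde{A}$ account for at most a $c_3 q^{-\eta L(m+1)}$ fraction of the mass; taking $L$ large (using the freedom announced after \eqref{eq:defL}) makes this fraction strictly smaller than the complement and leaves enough room to realize the required injection. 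Once $\gamma_1,\gamma_2$ are in place, the modified bridges $A\cup\gamma_1(A)\subseteq\widetilde{A}$ and $\tau(A)\cup\gamma_2(\tau(A))\subseteq\widetilde{\tau(A)}$ are again localized inside single arithmetic predecessors, and the nesting claims for $\mathcal{J}_1,\mathcal{J}_2,\mathcal{J}_3$ reduce to the argument already used for $\mathcal{H}$.

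The hardest step will be the construction of $\gamma_1$ and $\gamma_2$: the simultaneous requirements of correct dyadic level, strict interior distance $\geq q^{\lev A}$, injectivity across the entire boundary layer of each arithmetic predecessor, and nestedness of the resulting collections leave little room to manoeuvre, and it is precisely at this point that the boundary-mass decay of Theorem \ref{th:dyadic}(5) combined with the freedom to enlarge the arithmetic gap constant $L$ must be used decisively.
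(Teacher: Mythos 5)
Your definition of the decomposition and your treatment of $\mathcal{H}$ and $\mathcal{J}$ follow essentially the paper's route: $\mathcal{H},\mathcal{I},\mathcal{J},\mathcal{J}_1,\mathcal{J}_2,\mathcal{J}_3$ are defined exactly as in Section \ref{sec:step1}; for $\mathcal{H}$ the point (Lemma \ref{lem:colH}) is that $B\subsetneq A$ forces $\widetilde{B}\subseteq A$ (levels in $\mathcal{L}$ differ by multiples of $L(m+1)$) and hence $\tau(B)\subseteq\widetilde{\tau(B)}=\widetilde{B}\subseteq A$, while the same-level case is handled by the separation of Proposition \ref{prop:deccmi}; for $\mathcal{J}$ the paper likewise chooses $L$ so large that the level-$(\lev A)$ cubes meeting the boundary layer of $\widetilde{A}$ (counted via Theorem \ref{th:dyadic}, points 4 and 5, and normality) are outnumbered by the interior cubes, obtains $\gamma_1,\gamma_2$ injectively, and then repeats the $\mathcal{H}$ argument with $\tau$ replaced by $\gamma_1,\gamma_2$ (Lemmas \ref{lem:colJ1}, \ref{lem:colJ2}). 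Your sketch of these parts is coarser but contains the same ingredients.

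The genuine gap is in item 2, the splitting of $\mathcal{I}$. First, your description of the obstruction is incomplete: besides a smaller bridge having exactly one endpoint in $A\cup\tau(A)$, nestedness already fails in the configuration $B\subseteq A$, $\tau(B)\subseteq\tau(A)$, since then the pair $(A,B\cup\tau(B))$ is neither disjoint nor comparable; the paper's collection $\mathcal{P}(A)$ is defined cube-by-cube (relative to $A$ and to $\tau(A)$ separately) precisely to capture this. Second, and decisively, coloring each $A$ by the "orientation" of $(\widetilde{A},\widetilde{\tau(A)})$ in a fixed enumeration does not eliminate the bad configurations: the conflict is between a cube $A$ and cubes $B$ at least $L(m+1)$ levels below it whose bridge crosses $\complement A$, and whether this happens is unrelated to the enumeration order of $\widetilde{B}$ versus $\widetilde{\tau(B)}$; one can perfectly well have $B\in\mathcal{P}(A)$ with $A$ and $B$ of the same orientation, so your monochromatic families need not be nested. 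Third, the existence of \emph{any} admissible two-coloring is not automatic: if one $B$ could straddle two different cubes $A_1\neq A_2$, the requirements could become contradictory. This is exactly why the paper proves Lemma \ref{lem:klgl1} (each $B\in\mathcal{I}\cup\tau(\mathcal{I})$ lies in $\mathcal{R}(A)$ for at most one $A$), using the $\mathcal{G}$-separation of Proposition \ref{prop:deccmi}, the interior distance $d(A,\com{\widetilde{A}})\geq q^{\lev A}$ built into the definition of $\mathcal{I}$, and the bound $d(B,\tau(B))\leq q^{m+\lev B}\leq q^{\lev A}$; only this uniqueness makes a coloring with the three properties listed in Section \ref{sec:step3} possible, and Lemma \ref{lem:schoenI} then yields nestedness of $\mathcal{I}_1,\mathcal{I}_2$. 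Your proposal contains neither this uniqueness statement nor any substitute for it, and the coloring you propose does not engage with the actual conflict relation, so part 2 of the theorem is not established by your argument.
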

The proof of this theorem is divided into four basic steps.
\begin{description}
\item[Step 1 (Subsection \ref{sec:step1})] We give the definition of the decomposition of $\mathcal{L}$ into $\mathcal{H},\mathcal{I},\mathcal{J}$ and we further define the decomposition of $\mathcal{I}$ into $\mathcal{I}_1,\mathcal{I}_2$ and also the decomposition of $\mathcal{J}$ into $\mathcal{J}_1,\mathcal{J}_2,\mathcal{J}_3$.
\item[Step 2 (Subsection \ref{sec:step2})] We verify that $\mathcal{H}$ satisfies condition 1. of Theorem \ref{th:zerl}.
\item[Step 3 (Subsection \ref{sec:step3})] We verify that $\mathcal{I}_1,\mathcal{I}_2$ satisfy condition 2. of Theorem \ref{th:zerl}. This involves a two-coloring of $\mathcal{I}$ and an application of the argument in Step 2.
\item[Step 4 (Subsection \ref{sec:step4})] We first define the injections $\gamma_1,\gamma_2$ and verify condition 3. of the theorem. Here we use reduction to the arguments introduced in Steps 2. and 3. 
\end{description}

\subsubsection{Definition of the Decomposition of $\mathcal{L}$}\label{sec:step1}

Fix $A\in\mathcal{L.}$ We make the following case distinction:

\begin{enumerate}\vspace{-0.2cm}
\item If $\widetilde{A}=\widetilde{\tau(A)},$ we add $A$ to $\mathcal{H}.$\vspace{-0.2cm}

\item If $\widetilde{A}\cap\widetilde{\tau(A)}=\emptyset,$ we let\vspace{-0.2cm}

\begin{enumerate}
\item $A\in\mathcal{I},$ if the values of $d(A,\com{\widetilde{A}})$ and
$d(\tau(A),\com{\widetilde{\tau(A)}})$ are both greater or equal
$q^{\operatorname*{lev}A},$

\item $A\in\mathcal{J},$ if one of the values $d(A,\com{\widetilde{A}})$ or
$d(\tau(A),\com{\widetilde{\tau(A)}})$ is less than $q^{\operatorname*{lev}A}.$
\end{enumerate}
\end{enumerate}

For the case 2a we define the following collections: Take any $A\in
\mathcal{I}\cup\tau(\mathcal{I})$, define%
\[
\mathcal{P}(A):=\{B\in\mathcal{I}:\operatorname*{lev}B<\operatorname*{lev}%
A\text{,}\text{ }[(B\cap A\neq\emptyset\wedge\tau(B)\cap \com{A}\neq
\emptyset)\vee(B\cap \com{A}\neq\emptyset\wedge\tau(B)\cap A\neq\emptyset)]\}
\]
and set%
\[
\mathcal{R}(A):=\{J,\tau(J):J\in\mathcal{P}(A)\}.
\]
The purpose of the collection $\mathcal{P}(A)$ is that we get rid of
overlappings that occur if we define a two-coloring on $\mathcal{I}$ (say with
the colors black and white) and set%
\[
\mathcal{I}_{1}:=\{A\in\mathcal{I}:\operatorname*{color}A=\text{black}%
\},\quad\mathcal{I}_{2}:=\{A\in\mathcal{I}:\operatorname*{color}%
A=\text{white}\}.
\]
This two-coloring will have the crucial property that if $A\in\mathcal{I}$ is
white, then every element in $\mathcal{P}(A)$ is black. At last, we define a
decomposition of $\mathcal{J}$ and let%
\begin{align*}
\mathcal{J}_{1}  & :=\{A\in\mathcal{J}:d(A,\com{\widetilde{A}}%
)<q^{\operatorname*{lev}A}\text{ and }d(\tau(A),\com{\widetilde{\tau(A)}})\geq
q^{\operatorname*{lev}A}\}\\
\mathcal{J}_{2}  & :=\{A\in\mathcal{J}:d(A,\com{\widetilde{A}})\geq
q^{\operatorname*{lev}A}\text{ \ and }d(\tau(A),\com{\widetilde{\tau(A)}%
})<q^{\operatorname*{lev}A}\}\\
\mathcal{J}_{3}  & :=\mathcal{J}\backslash(\mathcal{J}_{1}\cup\mathcal{J}%
_{2}).
\end{align*}

\subsubsection{The Collection $\mathcal{H}$}\label{sec:step2}

We first analyse the collection $\mathcal{H},$ which is simpler to handle than
$\mathcal{I}$ and $\mathcal{J}.$

\begin{lemma}
\label{lem:colH}The collection $\{A,\tau(A),A\cup\tau(A):A\in\mathcal{H}\}$ is nested.
\end{lemma}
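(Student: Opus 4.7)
The plan is to leverage the defining property of $\mathcal{H}$, namely $\widetilde{A}=\widetilde{\tau(A)}$, which immediately yields $A\cup\tau(A)\subseteq\widetilde{A}$. Each triple $\{A,\tau(A),A\cup\tau(A)\}$ is therefore contained in a single dyadic cube, and the nestedness question reduces to comparing arithmetic predecessors, which are themselves dyadic cubes. The three ingredients needed are: (i) any two dyadic cubes are either disjoint or one contains the other (Theorem \ref{th:dyadic}); (ii) $A,\tau(A)\subseteq\widetilde{A}$ for $A\in\mathcal{H}$; and (iii) for two distinct cubes $A,B\in\mathcal{A}_{\mathcal{G}}$ sitting at the same level $n$, Proposition \ref{prop:deccmi} gives $d(\tau^i(A),\tau^j(B))>Cq^n$ for all $i,j\in\{0,1\}$, so that all four cubes $A,\tau(A),B,\tau(B)$ are pairwise disjoint.

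The body of the proof is then a case analysis on the dyadic relation between $\widetilde{A}$ and $\widetilde{B}$ for two distinct elements $A,B\in\mathcal{H}$. If $\widetilde{A}\cap\widetilde{B}=\emptyset$, the $A$-triple lies inside $\widetilde{A}$ and the $B$-triple inside $\widetilde{B}$, so every cross pair is disjoint. If $\widetilde{A}=\widetilde{B}$, then because $A,B$ sit on the arithmetic progression of levels defining $\mathcal{L}_r$, equality of predecessors forces $\lev A=\lev B$, and (iii) gives four pairwise disjoint cubes $A,\tau(A),B,\tau(B)$, hence all six sets pairwise disjoint. If $\widetilde{A}\subsetneq\widetilde{B}$ (the opposite case being symmetric), then $\lev\widetilde{A}\leq\lev B$, so $\widetilde{A}$ is a dyadic cube of level at most $\lev B$ sitting inside $\widetilde{B}$; since $B$ and $\tau(B)$ are disjoint cubes of level $\lev B$, (i) forces $\widetilde{A}$ to be either contained in $B$, contained in $\tau(B)$, or disjoint from $B\cup\tau(B)$. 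In each sub-case the placement is inherited by $A\cup\tau(A)\subseteq\widetilde{A}$ and by $A,\tau(A)$ individually, yielding nested relations with every element of the $B$-triple.

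The only subtle point is the equal-predecessors case: one must rule out the degenerate configurations $\tau(A)=B$ or $A=\tau(B)$, which would otherwise make $A\cup\tau(A)$ and $B\cup\tau(B)$ overlap without one containing the other. This is precisely where the strict separation from Proposition \ref{prop:deccmi} is essential; the mere injectivity of $\tau$ from Proposition \ref{prop:boundednumber} would not suffice. Everything else is routine bookkeeping of dyadic inclusions.
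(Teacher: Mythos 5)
Your proof is correct and rests on the same three ingredients as the paper's (dyadic nesting, the defining property $\widetilde{A}=\widetilde{\tau(A)}$ of $\mathcal{H}$, and the arithmetic-progression structure forcing $\widetilde{B}\subseteq A$ whenever $B\subsetneq A$), but it organizes the case analysis around the dyadic relation between the arithmetic predecessors $\widetilde{A}$ and $\widetilde{B}$, whereas the paper goes pair by pair through $(A,B\cup\tau(B))$, $(\tau(A),B\cup\tau(B))$, $(A\cup\tau(A),B\cup\tau(B))$ and splits on which of the four pairwise intersections among $A,\tau(A),B,\tau(B)$ is nonempty. The two organizations reduce to the same collection of sub-cases, but yours has the virtue of making the role of the separation in Proposition \ref{prop:deccmi} explicit: for the pair $(A\cup\tau(A),B\cup\tau(B))$ with $\lev A=\lev B$ one genuinely needs to exclude $A=\tau(B)$ and $\tau(A)=B$, since these would give an overlapping but non-nested configuration, and injectivity of $\tau$ alone does not preclude them. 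The paper's proof dispatches the analogous sub-cases with the remark ``treated in the same way as above,'' which tacitly relies on that same separation. One small point worth tightening in your equal-predecessors step: $\widetilde{A}=\widetilde{B}$ forces $\lev A=\lev B$ provided neither predecessor is the convention $\widetilde{\,\cdot\,}=X$ for the top shell $l=0$; in that degenerate situation the levels can differ, but then $A=X$ (or $B=X$) and the corresponding triple reduces to $\{X\}$, which is trivially nested with everything, so the conclusion still holds.
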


\begin{proof}
Let $A,B\in\mathcal{H}$ with $A\neq B\mathcal{.}$ It suffices to look at the
pairs $(A,B\cup\tau(B)),$ $(\tau(A),B\cup\tau(B)),$ $(A\cup\tau(A),B\cup
\tau(B)),$ since the other cases are trivial (this is the case if both
elements in the pair are dyadic cubes themselves) or considered by symmetry
(as for example the pair $(B,A\cup\tau(A))$). We begin with $(A,B\cup
\tau(B)):$\newline We assume%
\begin{equation}
A\cap(B\cup\tau(B))\neq\emptyset,\label{eq:menge}%
\end{equation}
Then we have to show that either $A\subseteq B\cup\tau(B)$ or $B\cup
\tau(B)\subseteq A.$ We have $(\ref{eq:menge})$ if and only if%
\[
A\cap B\neq\emptyset\text{ or }A\cap\tau(B)\neq\emptyset.
\]
For $A\cap B\neq\emptyset$ we have the three possibilities%
\[
A=B\quad\text{or\quad}A\subset B\quad\text{or\quad}B\subset A,
\]
where $\subset$ denotes a strict inclusion. Indeed these are the only cases
that can happen, since $A$ and $B$ are dyadic cubes. But $A=B$ is impossible,
since we assumed $A\neq B.$ If $A\subset B,$ we clearly have $A\subseteq
B\cup\tau(B).$ If $B\subset A,$ it holds also that%
\[
\widetilde{B}\subseteq A.
\]
This yields $\tau(B)\subseteq A,$ since $\widetilde{B}=\widetilde{\tau(B)}. $
So $B\cup\tau(B)\subseteq A.$ For the case $A\cap\tau(B)\neq\emptyset,$
analogous arguments complete the analysis of the pair $(A,B\cup\tau(B)). $

The pair $(\tau(A),B\cup\tau(B))$ is then treated in the same manner. \newline
We now come to $(A\cup\tau(A),B\cup\tau(B)):$ Again, we have to consider a few
cases. First we assume that
\[
(A\cup\tau(A))\cap(B\cup\tau(B))\neq\emptyset.
\]
This is the case if and only if
\[
A\cap B\neq\emptyset\quad\text{or\quad}A\cap\tau(B)\neq\emptyset
\quad\text{or\quad}\tau(A)\cap B\neq\emptyset\quad\text{or\quad}\tau
(A)\cap\tau(B)\neq\emptyset.
\]
These four cases are treated in the same way as above.
\end{proof}
\subsubsection{The Collection $\mathcal{I}$}\label{sec:step3}
\begin{lemma}
\label{lem:klgl1}For each $B\in\mathcal{I}\cup\tau(\mathcal{I})$ there exists
at most one $A\in\mathcal{I}\cup\tau(\mathcal{I})$ such that%
\[
B\in\mathcal{R}(A).
\]

\end{lemma}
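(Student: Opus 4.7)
The plan is to argue by contradiction: suppose $A_1,A_2\in\mathcal{I}\cup\tau(\mathcal{I})$ are distinct and $B\in\mathcal{R}(A_1)\cap\mathcal{R}(A_2)$, so that there exist $J_i\in\mathcal{P}(A_i)$ with $B\in\{J_i,\tau(J_i)\}$ for $i=1,2$. I split into two cases: either $J_1=J_2$ (so both $A_i$ separate the common edge $\{J,\tau(J)\}$ with $J\in\mathcal{I}$), or $J_1\neq J_2$; in the latter, the only consistent identification (after a possible swap of indices, using that $\tau$ is an injection) is $B=J_1=\tau(J_2)$, which places $B$ in $\mathcal{I}\cap\tau(\mathcal{I})$.

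I would dispose of the second case first, using only Proposition \ref{prop:deccmi}. If $B\in\mathcal{I}\cap\tau(\mathcal{I})$, then both pairs $(B,\tau(B))$ and $(\tau^{-1}(B),B)$ lie in $\mathcal{G}$; the defining condition $\widetilde{B}\cap\widetilde{\tau(B)}=\emptyset$ of $\mathcal{I}$ prevents $B=\tau(B)$, so $B$ and $\tau^{-1}(B)$ are distinct, hence disjoint, cubes at level $\lev B$. Proposition \ref{prop:deccmi} applied to this pair then demands $d(\tau^i(B),\tau^j(\tau^{-1}(B)))>Cq^{\lev B}$ for all $i,j\in\{0,1\}$; the choice $(i,j)=(0,1)$ gives $d(B,B)=0$, which is absurd.

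In the main case $J_1=J_2=J$, the plan is to combine two tools. The arithmetic spacing of $\mathcal{L}_r$ gives $\lev A_i-\lev J\geq L(m+1)$, while every $A\in\mathcal{I}\cup\tau(\mathcal{I})$ inherits the buffer property $d(A,\com{\widetilde{A}})\geq q^{\lev A}$. Since $(J,\tau(J))\in\mathcal{C}_m$ yields $d(J,\tau(J))<q^{m+\lev J}$ and Theorem \ref{th:dyadic} bounds $\operatorname{diam}\tau(J)$ by a constant times $q^{\lev J}$, the quasimetric triangle inequality shows that, for $L$ chosen large enough so that $q^{(L-1)m+L}$ dominates the relevant quasimetric constants, whichever of $\{J,\tau(J)\}$ lies in $A_i$ forces the other to lie in $\widetilde{A_i}$. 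The two remaining sub-cases now close quickly. If $A_1,A_2$ both contain the same element of $\{J,\tau(J)\}$, they are nested, and WLOG $A_1\subsetneq A_2$ together with the arithmetic spacing gives $\widetilde{A_1}\subseteq A_2$, placing the excluded element inside $A_2$ and contradicting its separation. If $A_1,A_2$ contain opposite elements, they are disjoint; the buffer argument gives $A_1\subseteq\widetilde{A_2}$ and $A_2\subseteq\widetilde{A_1}$, and the arithmetic-progression level comparison forces $\lev A_1=\lev A_2=:n$ (otherwise the larger arithmetic predecessor swallows the smaller cube, contradicting disjointness). At level $n$, if $(A_1,A_2)$ are endpoints of the same $\mathcal{G}$-edge then $A_1\subseteq\widetilde{A_1}\cap\widetilde{\tau(A_1)}=\emptyset$ is impossible; otherwise Proposition \ref{prop:deccmi} gives $d(A_1,A_2)>Cq^n$, which is incompatible with $d(A_1,A_2)\leq d(J,\tau(J))<q^{m+\lev J}$.

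The main obstacle is the coordinated choice of $L$ and $C$: $L$ must be large enough that the buffer of width $q^{\lev A_i}$ swallows the whole edge $\{J,\tau(J)\}$, whose quasimetric scale is $q^{m+\lev J}$, and $C$ must be large enough that the $\mathcal{G}$-separation at a common level dominates that same scale. Both conditions reduce to an inequality of the form $Cq^{(L-1)m+L}\gg 1$ that is uniform in $m$, consistent with the paper's remark that further conditions on $L$ are imposed in Section \ref{sec:step4}.
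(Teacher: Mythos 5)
Your proof is correct and uses the same three geometric ingredients as the paper: the smallness of $d(J,\tau(J))<q^{m+\lev J}$ relative to $q^{\lev A_i}$, the boundary buffer $d(A,\complement\widetilde{A})\geq q^{\lev A}$ for $A\in\mathcal{I}\cup\tau(\mathcal{I})$, and the separation of Proposition \ref{prop:deccmi}. The organization differs in two noteworthy ways. First, you make explicit the fact that $\mathcal{I}\cap\tau(\mathcal{I})=\emptyset$ (your clean application of Proposition \ref{prop:deccmi} to the pair $(B,\tau^{-1}(B))$), which the paper's ``WLOG $B\in\mathcal{I}$'' step tacitly relies on; this is a genuine gap in exposition that you fill. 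Second, you split by whether $A_1,A_2$ contain the same or opposite endpoint of the edge $\{J,\tau(J)\}$, whereas the paper splits by $\lev A_1$ versus $\lev A_2$ and nestedness; your ``same endpoint'' case corresponds to the paper's (b)(i), and your ``opposite endpoints'' case absorbs the paper's (a)(i), (a)(ii), and (b)(ii). In particular, in the disjoint/opposite-endpoints scenario you first force $\lev A_1=\lev A_2$ via the mutual containments $A_1\subseteq\widetilde{A_2}$, $A_2\subseteq\widetilde{A_1}$ and the arithmetic spacing, then invoke Proposition \ref{prop:deccmi}; the paper instead handles the different-level disjoint subcase (b)(ii) directly with the buffer $d(A_1,A_2)\geq d(A_1,\complement\widetilde{A_1})$, without reducing to equal levels. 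Both routes are sound; yours is a bit more symmetric, the paper's a bit more direct. One small point of care you correctly anticipate: the buffer containment $\tau(J)\subset\widetilde{A_1}$ requires coordinating $L$ with the quasimetric constant $K$ and $\operatorname{diam}\tau(J)\lesssim q^{\lev J}$, which is exactly the extra freedom in choosing $L$ reserved for Section \ref{sec:step4}.
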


\begin{proof}
Let $A_{1},A_{2}\in\mathcal{I}\cup\tau(\mathcal{I})$ with $A_{1}\neq A_{2} $
such that%
\[
B\in\mathcal{R}(A_{1})\quad\text{and\quad}B\in\mathcal{R}(A_{2}).
\]
We split the proof into two parts. Part (a) treats the case
$\operatorname*{lev}A_{1}=\operatorname*{lev}A_{2}$ and part (b) treats the
case $\operatorname*{lev}A_{1}<\operatorname*{lev}A_{2},$ which is the general
case, since we can always exchange $A_{1}$ and $A_{2}.$ We additionally assume
$B\in\mathcal{I},$ since the argument is symmetric if we assume $B\in
\tau(\mathcal{I}).$

\begin{enumerate}\vspace{-0.2cm}
\item[(a)] We first treat the case $\operatorname*{lev}A_{1}%
=\operatorname*{lev}A_{2}.$ Here we get from the definition of $\mathcal{I}$
and from Proposition \ref{prop:deccmi} that $d(A_{1},A_{2}%
)>q^{\operatorname*{lev}A_{1}}$ and $\operatorname*{lev}B\leq
\operatorname*{lev}A_{1}-L(m+1).$ Again we distinguish two cases. In view of
the fact that $B\in\mathcal{R}(A_{1})$, we split to (i) $B\cap A_{1}%
\neq\emptyset$ and (ii) $\tau(B)\cap A_{1}\neq\emptyset.$\vspace{-0.2cm}
\begin{enumerate}
\item[(i)] With $B\cap A_{1}\neq\emptyset$ it holds that $B\subset A_{1}$ and
so $B\cap A_{2}=\emptyset.$ Thus we have%
\[
d(A_{1},\tau(B))\leq d(B,\tau(B))\leq q^{\operatorname*{lev}B+m}\leq
q^{\operatorname*{lev}A_{1}}.
\]
From these facts we infer that $\tau(B)\not \subset A_{2}$ and that implies
$\tau(B)\cap A_{2}=\emptyset,$ which contradicts the assumption $B\in
\mathcal{R}(A_{2}).$
\item[(ii)] If $\tau(B)\cap A_{1}\neq\emptyset,$ this leads to $\tau(B)\subset
A_{1}$ and $\tau(B)\cap A_{2}=\emptyset.$ Analogously to the above we get%
\[
d(A_{1},B)\leq d(\tau(B),B)\leq q^{\operatorname*{lev}B+m}\leq
q^{\operatorname*{lev}A_{1}}.
\]
This implies $B\cap A_{2}=\emptyset,$ which contradicts $B\in\mathcal{R}%
(A_{2}).$
\end{enumerate}\vspace{-0.2cm}

\item[(b)] Now we assume without loss of generality that $\operatorname*{lev}%
A_{1}<\operatorname*{lev}A_{2}.$ Here we consider the two cases (i)
$A_{1}\subset A_{2}$ and (ii) $A_{1}\cap A_{2}=\emptyset.$
\vspace{-0.2cm}
\begin{enumerate}
\item[(i)] For $A_{1}\subset A_{2}$ we have by definition of $\mathcal{I}$%
\begin{equation}
d(A_{1},\com{A_{2}})\geq d(A_{1},\com{\widetilde{A_{1}}})>q^{\operatorname*{lev}%
A_{1}}.\label{eq:lem3}%
\end{equation}
Like in case (a) we have to consider the two cases $B\cap A_{1}\neq
\emptyset\ $and $\tau(B)\cap A_{1}\neq\emptyset.$ We proceed with $B\cap
A_{1}\neq\emptyset.$ (The case $\tau(B)\cap A_{1}\neq\emptyset$ works
analogously.) So it follows that $B\subset A_{1}$ and so $B\subset A_{2}.$ We
have the estimate%
\begin{equation}
d(A_{1},\tau(B))\leq d(B,\tau(B))\leq q^{m+\operatorname*{lev}B}\leq
q^{\operatorname*{lev}A_{1}}.\label{eq:lem4}%
\end{equation}
Now it follows from $(\ref{eq:lem3})$ and $(\ref{eq:lem4})$ that
$\tau(B)\not \subset \com{A_{2}},$ i.e. $\tau(B)\subset A_{2}.$ This contradicts
$B\in\mathcal{R}(A_{2}).$

\item[(ii)] Let $A_{1}\cap A_{2}=\emptyset.$ In that case we have%
\[
d(A_{1},A_{2})\geq d(A_{1},\com{\widetilde{A_{1}}})>q^{\operatorname*{lev}A_{1}%
}.
\]
If $B\cap A_{1}\neq\emptyset$ (the other case $\tau(B)\cap A_{1}\neq\emptyset$
is treated analogously), it follows that $B\subset A_{1}$ and hence $B\subset
\com{A_{2}}.$ We have%
\[
d(A_{1},\tau(B))\leq d(B,\tau(B))\leq q^{m+\operatorname*{lev}B}\leq
q^{\operatorname*{lev}A_{1}}.
\]
Thus we get%
\[
\tau(B)\not \subset A_{2}%
\]
and thus $\tau(B)\cap A_{2}=\emptyset.$ This identity together with $B\subset
\com{A_{2}}$ contradicts $B\in\mathcal{R}(A_{2}).$ This finishes the proof.
\end{enumerate}
\end{enumerate}
\vspace{-0.8cm}
\end{proof}
\vspace{0.3cm}

Lemma \ref{lem:klgl1} allows us to introduce the announced two-coloring on
$\mathcal{I}\cup\tau(\mathcal{I})$ with the colors black and white that satisfies the following three conditions:

\begin{enumerate}\vspace{-0.2cm}
\item For each $A\in\mathcal{I}\cup\tau(\mathcal{I})$ the collection
$\mathcal{R}(A)$ is monochromatic,\vspace{-0.2cm}

\item If the color of $A\in\mathcal{I}\cup\tau(\mathcal{I})$ is already
determined, then each $B\in\mathcal{R}(A)$ satisfies%
\[
\text{color}(B)\neq\text{color}(A),\vspace{-0.2cm}
\]

\item For each $A\in\mathcal{I},$%
\[
\text{color}(A)=\text{color}(\tau(A)).
\]\vspace{-0.2cm}
\end{enumerate}

Define%
\[
\mathcal{I}_{1}=\{A\in\mathcal{I}:\text{color}(A)=\text{white}\}\quad
\text{and\quad}\mathcal{I}_{2}=\{A\in\mathcal{I}:\text{color}(A)=\text{black}%
\}.
\]

\begin{lemma}
\label{lem:nichtR}If $A\in\mathcal{I}$ and $B\notin\mathcal{P}(A)$ with
$\operatorname*{lev}B<\operatorname*{lev}A,$ then%
\[
B\cup\tau(B)\subseteq A\quad\text{or\quad}B\cup\tau(B)\subseteq \com{A}.
\]

\end{lemma}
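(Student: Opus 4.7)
The argument is essentially an unpacking of definitions combined with the nestedness of dyadic cubes. First I would record the dichotomy: since $B$ and $\tau(B)$ both lie in $\mathcal{A}_n$ for some $n=\operatorname*{lev}B<\operatorname*{lev}A$, property 2 of Theorem \ref{th:dyadic} gives
\[
B\subseteq A \quad\text{or}\quad B\cap A=\emptyset,\qquad \tau(B)\subseteq A \quad\text{or}\quad \tau(B)\cap A=\emptyset.
\]
In particular, $B\cap A\neq\emptyset$ is equivalent to $B\cap\com{A}=\emptyset$, and symmetrically for $\tau(B)$.

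Next I would rewrite the bracketed condition in the definition of $\mathcal{P}(A)$ under this dichotomy: it becomes
\[
(B\subseteq A \wedge \tau(B)\subseteq \com{A}) \vee (B\subseteq \com{A} \wedge \tau(B)\subseteq A),
\]
i.e. exactly one of $B,\tau(B)$ is contained in $A$. Its negation, still under the dichotomy, is the statement that $B$ and $\tau(B)$ lie on the same side of $\partial A$, so that either both are contained in $A$ or both are contained in $\com{A}$. The hypothesis $B\notin\mathcal{P}(A)$ therefore forces one of
\[
B\cup\tau(B)\subseteq A\qquad\text{or}\qquad B\cup\tau(B)\subseteq \com{A},
\]
which is the desired conclusion.

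\textbf{Obstacle.} There is essentially no technical obstacle; the only subtlety is that the lemma is applied with $B\in\mathcal{I}$ so that $\tau(B)$ is defined and the failure of $B\in\mathcal{P}(A)$ really means failure of the crossing condition rather than failure to lie in $\mathcal{I}$. Once this is taken as the standing context, the proof is a two--line case distinction as above, with the nestedness from Theorem \ref{th:dyadic} doing all of the work. The hypothesis $A\in\mathcal{I}$ plays no direct role here beyond the fact that it is the setting in which $\mathcal{P}(A)$ was introduced.
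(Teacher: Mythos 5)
Your proof is correct and fills in exactly the ``logical manipulation of the definition of $\mathcal{P}(A)$'' that the paper leaves as a one-line remark: the nestedness dichotomy from point~2 of Theorem~\ref{th:dyadic} turns the crossing condition into ``exactly one of $B,\tau(B)$ lies in $A$,'' and negating it gives the claim. You also correctly flag the implicit standing hypothesis $B\in\mathcal{I}$ (so that $\tau(B)$ makes sense and $B\notin\mathcal{P}(A)$ really encodes failure of the crossing condition), which matches how the lemma is invoked in the proof of Lemma~\ref{lem:schoenI}.
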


\begin{proof}
This is nothing else but a logical manipulation of the definition of
$\mathcal{P}(A).$
\end{proof}

\begin{lemma}
\label{lem:schoenI}The two subcollections
\[
\{A,\tau(A),A\cup\tau(A):A\in\mathcal{I}_{j}\}\quad\text{for }j\in\{1,2\}
\]
are nested.
\end{lemma}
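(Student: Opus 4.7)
The goal is to show that for each $j\in\{1,2\}$ and any two distinct $A,B\in\mathcal{I}_j$, every pair drawn from the six sets $\{A,\tau(A),A\cup\tau(A),B,\tau(B),B\cup\tau(B)\}$ is nested. The pairs among the dyadic cubes $A,\tau(A),B,\tau(B)$ are automatically nested (dyadic cubes at comparable levels are either disjoint or one contains the other), so only pairs involving a union need attention. Assume without loss of generality that $\lev B\le\lev A$.

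\emph{Case 1: $\lev A=\lev B=:n$.} The distinct pairs $(A,\tau(A)),(B,\tau(B))\in\mathcal{G}$ lie in $\mathcal{A}_n$, so the separation assertion of Proposition \ref{prop:deccmi} yields $d(\tau^i(A),\tau^j(B))>Cq^n$ for all $i,j\in\{0,1\}$. Consequently $A,\tau(A),B,\tau(B)$ are pairwise disjoint and in particular $(A\cup\tau(A))\cap(B\cup\tau(B))=\emptyset$, so all remaining pairs are trivially nested.

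\emph{Case 2: $\lev B<\lev A$.} By property~3 of the two-coloring, $\text{color}(\tau(A))=\text{color}(A)=\text{color}(B)$. Property~2 then forces $B\notin\mathcal{R}(A)$ and $B\notin\mathcal{R}(\tau(A))$; since $B\in\mathcal{I}$, this yields $B\notin\mathcal{P}(A)\cup\mathcal{P}(\tau(A))$. Lemma \ref{lem:nichtR} applied to $A$ gives the first dichotomy
\[
B\cup\tau(B)\subseteq A\quad\text{or}\quad B\cup\tau(B)\subseteq\com{A}.
\]
The proof of Lemma \ref{lem:nichtR} is a purely logical manipulation of the definition of $\mathcal{P}(\cdot)$, so it applies verbatim with $\tau(A)$ in place of $A$ and delivers a second dichotomy
\[
B\cup\tau(B)\subseteq\tau(A)\quad\text{or}\quad B\cup\tau(B)\subseteq\com{\tau(A)}.
\]

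Combining the two dichotomies, and using that $A\in\mathcal{I}$ implies $\widetilde{A}\cap\widetilde{\tau(A)}=\emptyset$ and hence $A\cap\tau(A)=\emptyset$, only three configurations are feasible: $B\cup\tau(B)\subseteq A$, $B\cup\tau(B)\subseteq\tau(A)$, or $B\cup\tau(B)\subseteq\com{(A\cup\tau(A))}$. In the first two, $B\cup\tau(B)\subseteq A\cup\tau(A)$; in the last, the two unions are disjoint. Each alternative immediately handles the remaining pairs such as $(A,B\cup\tau(B))$, $(\tau(A),B\cup\tau(B))$, $(A\cup\tau(A),B)$ and $(A\cup\tau(A),\tau(B))$, since $B$ and $\tau(B)$ inherit the same containment or disjointness from $B\cup\tau(B)$. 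The main obstacle is the transport of Lemma \ref{lem:nichtR} to $\tau(A)\in\tau(\mathcal{I})$: it requires observing that the two-coloring lives on $\mathcal{I}\cup\tau(\mathcal{I})$ (so property~2 applies to $\tau(A)$) and that the proof of the lemma uses only the definition of $\mathcal{P}(\cdot)$; once this is recognized, the built-in disjointness $A\cap\tau(A)=\emptyset$ eliminates the impossible joint case $B\cup\tau(B)\subseteq A\cap\tau(A)$ and the conclusion follows.
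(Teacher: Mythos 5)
Your proof is correct and takes essentially the paper's route: cube--cube pairs are nested because dyadic cubes are, equal levels are handled via the separation property of Proposition \ref{prop:deccmi}, and different levels via the two-coloring combined with Lemma \ref{lem:nichtR}. The only deviation is that you apply the dichotomy of Lemma \ref{lem:nichtR} to both $A$ and $\tau(A)$ (rightly observing that its proof extends to $\tau(\mathcal{I})$, since $\mathcal{P}(\cdot)$ is defined on $\mathcal{I}\cup\tau(\mathcal{I})$), which, together with $A\cap\tau(A)=\emptyset$, settles all pairs at once and lets you skip the paper's separate exclusion of $\tau(B)\subseteq A\cup\tau(A)$ via Lemma \ref{lem:punktmass}.
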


\begin{proof}
Let $A,B\in\mathcal{I}_{j}$ for $j\in\{1,2\}.$ We consider the three pairs (a)
$(A,B\cup\tau(B)),$ (b) $(\tau(A),B\cup\tau(B)),$ (c) $(A\cup\tau(A),B\cup
\tau(B)).$\vspace{-0.2cm}
\begin{enumerate}
\item[(a)] We have to show that either%
\begin{equation}
A\cap(B\cup\tau(B))=\emptyset\quad\text{or\quad}A\subseteq B\cup\tau
(B)\quad\text{or\quad}B\cup\tau(B)\subseteq A.\label{eq:lemnest}%
\end{equation}
We consider the three cases (i) $\operatorname*{lev}A=\operatorname*{lev}B,$
(ii) $\operatorname*{lev}A<\operatorname*{lev}B$ and (iii)
$\operatorname*{lev}B<\operatorname*{lev}A:$
\begin{enumerate}
\item[(i)] This is clear, since $A$ and $B$ are dyadic cubes.
\item[(ii)] If $A\in\mathcal{P}(B)$ then either $A$ or $B$ is not in
$\mathcal{I}_{j};$ if $A\notin\mathcal{P}(B),$ then due to Lemma
\ref{lem:nichtR} we have%
\[
A\cup\tau(A)\subseteq B\quad\text{or\quad}A\cup\tau(A)\subseteq \com{B}.
\]
In the first case, clearly, $A\subseteq B\cup\tau(B).$ In the second case
$A\subseteq \com{B}$ and so%
\[
A\cap(B\cup\tau(B))=A\cap\tau(B)=\left\{
\begin{array}
[c]{ll}%
A, & \text{if }A\cap\tau(B)\neq\emptyset\\
\emptyset, & \text{else}%
\end{array}
.\right.
\]
Both branches lead to one of the alternatives in $(\ref{eq:lemnest}).$
\item[(iii)] Analogous to (even simpler than) case (ii).
\end{enumerate}\vspace{-0.2cm}
\item[(b)] Analogous to (a).\vspace{-0.2cm}
\item[(c)] We have to show that either%
\begin{equation}
(A\cup\tau(A))\cap(B\cup\tau(B))=\emptyset\quad\text{or\quad}A\cup
\tau(A)\subseteq B\cup\tau(B)\quad\text{or\quad}B\cup\tau(B)\subseteq
A\cup\tau(A).
\end{equation}
We consider the two cases (i) $\operatorname*{lev}A=\operatorname*{lev}B,$
(ii) $\operatorname*{lev}A<\operatorname*{lev}B$:
\begin{enumerate}\vspace{-0.2cm}
\item[(i)] Since $A$ and $B$ are in a collection $\mathcal{G},$ we have that
$d(\tau(A),B)$ and $d(A,\tau(B))$ are greater than $q^{\operatorname*{lev}A},$
and so%
\[
\left(  A\cup\tau(A)\right)  \cap\left(  B\cup\tau(B)\right)  =\emptyset.
\]
\vspace{-0.2cm}
\item[(ii)] If $A\in\mathcal{P}(B)$ then either $A$ or $B$ is not in
$\mathcal{I}_{j}.$ If $A\notin\mathcal{P}(B)$ we get with Lemma
\ref{lem:nichtR} that either%
\[
A\cup\tau(A)\subseteq B\quad\text{or\quad}A\cup\tau(A)\subseteq \com{B}.
\]
In the first case, clearly, $A\cup\tau(A)\subseteq B\cup\tau(B).$ In the
second case we get from part (b) of the Lemma that for
\[
(A\cup\tau(A))\cap\tau(B)
\]
we only have the three possibilities $\emptyset,$ $A\cup\tau(A)$ or $\tau(B).$
The former two lead to $(A\cup\tau(A))\cap(B\cup\tau(B))=\emptyset$ and
$A\cup\tau(A)\subseteq B\cup\tau(B)$ respectively. The third one gives%
\[
\tau(B)\subseteq A\cup\tau(A),
\]
which is not possible (cf. Lemma \ref{lem:punktmass}).
\end{enumerate}
\end{enumerate}
\vspace{-0.7cm}
\end{proof}

\begin{remark}
We remark that this decomposition of $\mathcal{I}$ into $\mathcal{I}_1$ and $\mathcal{I}_2$, in particular the proof of Lemma \ref{lem:klgl1}, does not depend on the explicit form of $\mathcal{I}$ and, what is even more important, the corresponding injection $\tau$. In fact, the same proof works if there exists a constant $C_R$ such that the following conditions are satisfied:
\begin{enumerate}
\item $\tau$ is an injection on $\mathcal{I}$ that $\lev A=\lev \tau(A)$ whenever $A\in\mathcal{I}$,
\item for every $A\in\mathcal{I}, \tilde{A}\cap\widetilde{\tau(A)}=\emptyset,$
\item for $A\in\mathcal{I}$, we have that $\min(d(A,\complement\tilde{A}),d(\tau(A),\complement\widetilde{\tau(A)}))>C_R q^{\lev A}$,
\item for $A,B\in\mathcal{I}$, it holds that $d(\tau^j(A),\tau^i(B))>C_R q^{\lev A}$ for $i,j\in \{0,1\}$,
\item for $A\in\mathcal{I}$, we have that $\max(d(\tilde{A},\tau(A)),d(\widetilde{\tau(A)},A))\leq C_R q^{\lev\tilde{A}}$,
\item for two disjoint sets $A,B$ in $\mathcal{I}$ such that $A\supset B$, we have that $\lev A\geq \lev B+L(m+1)$.

\end{enumerate}
\end{remark}
\subsubsection{The Collection $\mathcal{J}$}\label{sec:step4}

\begin{lemma}
There exists a constant $C_2$ such that for all $A\in\mathcal{A}_{n}$ and every $l\in\mathbb{N,}$ the number $Y_l^A$ of sets $B$ in $\mathcal{A}%
_{n-l}$, for which we have $B\subseteq A$, is bounded from
below by%
\[
C_{2}q^{l}.
\]
\end{lemma}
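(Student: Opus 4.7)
The plan is to extract the bound $Y_l^A \geq C_2 q^l$ directly from the normality of $X$ together with the ball inclusions in Theorem \ref{th:dyadic}(4), combined with a simple partition argument that comes from iterating Theorem \ref{th:dyadic}(8). Specifically, I would first observe that applying Theorem \ref{th:dyadic}(8) $l$ times (using the nesting property (2) and the uniqueness in (3)) yields
\[
A = \bigcup_{\substack{B\in\mathcal{A}_{n-l}\\ B\subseteq A}} B
\]
up to $\mu$-null sets, so that
\[
\mu(A) \;=\; \sum_{\substack{B\in\mathcal{A}_{n-l}\\ B\subseteq A}}\mu(B) \;\leq\; Y_l^A\cdot\max_{\substack{B\in\mathcal{A}_{n-l}\\ B\subseteq A}}\mu(B).
\]

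Next I would bound each factor using Theorem \ref{th:dyadic}(4) and normality. For the lower bound, $A\supseteq B(z_\alpha^n,c_1 q^n)$, and since $\mu(\{z_\alpha^n\})=0<c_1 q^n$, normality gives $\mu(A)\geq b_1 c_1 q^n$ whenever $c_1 q^n<\mu(X)=1$. For the upper bound, each $B\in\mathcal{A}_{n-l}$ with $B\subseteq A$ satisfies $B\subseteq B(z_\beta^{n-l},c_2 q^{n-l})$, so $\mu(B)\leq b_2 c_2 q^{n-l}$ whenever $c_2 q^{n-l}<1$. Plugging these into the displayed inequality yields
\[
Y_l^A \;\geq\; \frac{b_1 c_1}{b_2 c_2}\, q^l,
\]
which is the desired estimate with $C_2:=b_1 c_1/(b_2 c_2)$.

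The main (mild) obstacle is handling the endpoint cases in which $c_1 q^n \geq 1$ or $c_2 q^{n-l}\geq 1$, where normality does not literally apply in the form stated. This is only a finite-range issue: since $n\leq 0$ and $q>1$, the quantity $q^{n-l}$ is eventually $<1/c_2$, and the finitely many remaining values of $(n,l)$ can be absorbed by shrinking $C_2$. For the lower bound on $\mu(A)$ when $c_1 q^n\geq 1$, one may simply use $\mu(A)\geq \mu(X)/|\mathcal{A}_n|$ together with the fact that only finitely many levels are affected (as $n\leq 0$). Thus one obtains a single constant $C_2>0$ depending only on the space of homogeneous type $X$ that works for all $n\leq 0$ and all $l\in\mathbb{N}$.
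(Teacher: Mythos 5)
Your argument is essentially identical to the paper's: both partition $A$ into its level-$(n-l)$ descendants, bound $\mu(A)$ from below by $b_1c_1q^n$ and each $\mu(B)$ from above by $b_2c_2q^{n-l}$ via normality plus Theorem \ref{th:dyadic}(4), and divide to get $Y_l^A\geq (b_1c_1/b_2c_2)q^l$. You additionally address the finitely many endpoint levels where the normality inequality does not literally apply (which the paper silently passes over); that is a correct and harmless refinement, not a different route.
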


\begin{proof}
If we use the normality of $X$ and point 4. of Theorem \ref{th:dyadic} the conclusion of the lemma follows from the subsequent chain of inequalities:
\begin{eqnarray*}
\frac{b_1c_1}{b_2c_2}q^n&\leq& \frac{1}{b_2c_2}\mu(A)=\frac{1}{b_2c_2}\sum_{B\subset A, B\in\mathcal{A}_{n-l}}\mu(B) \\
&\leq & \sum_{B\subset A, B\in\mathcal{A}_{n-l}} q^{n-l}=Y^A_l  q^{n-l}.\qedhere 
\end{eqnarray*}


\end{proof}

Now, recall the definition of the boundary layer $\partial_{t}A$ of a cube $A$ with level $n$, which we defined as
\[
\partial_t A=\{x\in A:d(x,X\backslash A)\leq t q^n\}.
\]
Additionally, due to Theorem \ref{th:dyadic}, the measure of $\partial_t A$ admits the following upper bound
\[
\mu(\partial_t A)<c_3 t^\eta\mu(A)
\]
for some universal constants $c_3,\eta>0$.
\begin{lemma}
There exists a constant $C_3$ such that for all $A\in\mathcal{A}_n$ and every $l\in\mathbb{N}$, the number $X_l^A$ of sets $B\in\mathcal{A}_{n-l}$ for which we have 
\[
B\cap \partial _{q^{-l}}A\neq \emptyset
\]
is bounded from above by
\[
C_3q^{l(1-\eta)}.
\]
\end{lemma}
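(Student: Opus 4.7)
The plan is to reduce the counting problem to the measure estimate on the boundary layer given in Theorem \ref{th:dyadic} (point 5), using normality both to bound $\mu(B)$ from below for $B\in\mathcal{A}_{n-l}$ and to control the diameter of such $B$.

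First I would observe that if $B\in\mathcal{A}_{n-l}$ satisfies $B\cap\partial_{q^{-l}}A\neq\emptyset$, then $B\cap A\neq\emptyset$, and since $\lev B=n-l<n=\lev A$ the nesting property (point 2 of Theorem \ref{th:dyadic}) forces $B\subseteq A$. Next I would upgrade this to the statement that $B$ lies in a slightly thicker boundary layer, namely $B\subseteq \partial_{Cq^{-l}}A$ for a constant $C$ depending only on $X$. Indeed, by point 4 of Theorem \ref{th:dyadic} we have $B\subseteq B(z_B^{n-l},c_2 q^{n-l})$, so any two points of $B$ are within quasimetric distance at most $2Kc_2 q^{n-l}$. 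Combining this diameter bound with a point $x\in B\cap \partial_{q^{-l}}A$ and applying the quasimetric triangle inequality, for any $y\in B$ we obtain
\[
d(y,X\setminus A)\leq K\bigl(d(y,x)+d(x,X\setminus A)\bigr)\leq K(2Kc_2+1)\,q^{n-l}=C\,q^{-l}q^{n},
\]
with $C:=K(2Kc_2+1)$, which is exactly $y\in\partial_{Cq^{-l}}A$.

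Once this containment is established, I would add up measures over the (pairwise essentially disjoint) cubes counted by $X_l^A$. Using the lower bound $\mu(B)\geq b_1 c_1 q^{n-l}$ from normality and point 4 of Theorem \ref{th:dyadic}, and the boundary layer estimate $\mu(\partial_{Cq^{-l}}A)<c_3 C^{\eta}q^{-l\eta}\mu(A)$, combined with $\mu(A)\leq b_2 c_2 q^n$, I would conclude
\[
X_l^A\cdot b_1 c_1 q^{n-l}\leq \sum_{\substack{B\in\mathcal{A}_{n-l}\\ B\subseteq\partial_{Cq^{-l}}A}}\mu(B)\leq \mu(\partial_{Cq^{-l}}A)\leq c_3 C^{\eta}b_2 c_2\,q^{n-l\eta},
\]
which rearranges to $X_l^A\leq C_3 q^{l(1-\eta)}$ with $C_3=c_3 C^{\eta}b_2 c_2/(b_1 c_1)$, a constant depending only on the space of homogeneous type.

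The only delicate point is the passage from $B\cap\partial_{q^{-l}}A\neq\emptyset$ to $B\subseteq\partial_{Cq^{-l}}A$: this is where the quasimetric constant $K$ enters and where one has to be careful not to lose the scaling in $l$. Everything else is a direct combination of normality, the nesting structure, and the boundary decay exponent $\eta$ from Theorem \ref{th:dyadic}.
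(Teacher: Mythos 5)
Your proof is correct and follows essentially the same route as the paper: upgrade the intersection hypothesis to the containment $B\subseteq\partial_{Cq^{-l}}A$ via the quasi-triangle inequality, then compare the total measure of the disjoint cubes $B$ (bounded below by normality) with $\mu(\partial_{Cq^{-l}}A)$ (bounded above by point 5 of Theorem \ref{th:dyadic} and normality applied to $A$). The only difference is cosmetic: the paper simply asserts the existence of the dilation constant ($d$ there, $C=K(2Kc_2+1)$ here) as "a simple consequence of the quasi-triangle inequality," whereas you spell out the diameter estimate explicitly.
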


\begin{proof}
It is a simple consequence of the quasi-triangle inequality that there exists $d\geq 1$ depending only on $X$ such that if $B\in\mathcal{A}_{n-l}$ we have
\[
B\cap\partial_{q^{-l}}\neq \emptyset\Rightarrow B\subset \partial_{dq^{-l}}A.
\]
With this fact, the normality of $X$ and Theorem \ref{th:dyadic}, point 4. and 5., the conclusion of the lemma follows from the subsequent chain of inequalities:
\begin{eqnarray*}
X^A_l  q^{n-l}&\leq& b_2c_2 \sum_{B\subset\partial_{dq^{-l}}A,B\in\mathcal{A}_{n-l}} q^{n-l}\leq \frac{1}{b_1c_1} \sum_{B\subset\partial_{dq^{-l}}A,B\in\mathcal{A}_{n-l}} \mu(B) \\
&\leq& \frac{1}{b_1c_1}\mu(\partial_{dq^{-l}}A)\leq c_3\frac{1}{b_1c_1} d^{\eta}q^{-l\eta}\mu(A)\leq c_3\frac{b_2c_2}{b_1c_1} d^{\eta}q^{-l\eta}q^n.\qedhere
\end{eqnarray*}
\end{proof}

In view of the above two lemmas and Lemma \ref{lem:punktmass}, we can choose
$L$ in $(\ref{eq:defL})$ large enough that for all $l\geq L,$ $A\in
\mathcal{A}_{n}$ and $B\in\mathcal{A}_{n-l}$ we don't have%
\[
A=B,
\]
and in addition that the quotient $\frac{Y_{L}^{A}}{X_{L}^{A}}$ admits the
bound%
\[
\frac{Y_{L}^{A}}{X_{L}^{A}}>2.
\]
This property is crucial, since it enables us to define an injection
$\gamma_{1}:\mathcal{J}_{1}\cup\mathcal{J}_{3}\rightarrow\mathcal{A},$ such
that we have $\gamma_{1}(A)\subseteq\widetilde{A}$ and we have moved away from
the boundary of $\widetilde{A}:$%
\[
d(\gamma_{1}(A),\com{\widetilde{A}})\geq q^{\operatorname*{lev}A}.
\]
We extend $\gamma_{1}$ to $\mathcal{J}_{1}\cup\mathcal{J}_{3}\cup\gamma
_{1}(\mathcal{J}_{1})\cup\gamma_{1}(\mathcal{J}_{3})$ and define for
$A\in\gamma_{1}(\mathcal{J}_{1})\cup\gamma_{1}(\mathcal{J}_{3})$ that%
\[
\gamma_{1}(A):=\gamma_{1}^{-1}(A).
\]
It as now a straightforward consequence of the definitions that the following holds:

\begin{lemma}
\label{lem:colJ1}The collections%
\[
\{A,\gamma_{1}(A),A\cup\gamma_{1}(A):A\in\mathcal{J}_{1}\}\quad\text{and\quad
}\{A,\gamma_{1}(A),A\cup\gamma_{1}(A):A\in\mathcal{J}_{3}\}
\]
are nested.
\end{lemma}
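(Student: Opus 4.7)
The argument rests on the single observation that the triple $A,\gamma_{1}(A),A\cup\gamma_{1}(A)$ is always contained in the dyadic cube $\widetilde{A}$, so that the pairwise comparison of two such triples reduces, after one case distinction on the position of $\widetilde{A}$ and $\widetilde{B}$, to the nesting properties of dyadic cubes from Theorem \ref{th:dyadic}. The argument is identical for $\mathcal{J}_{1}$ and $\mathcal{J}_{3}$, so I describe only the case $A,B\in\mathcal{J}_{1}$ with $A\neq B$ and aim to show that the six sets $A,\gamma_{1}(A),A\cup\gamma_{1}(A),B,\gamma_{1}(B),B\cup\gamma_{1}(B)$ are pairwise nested.

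Two preliminaries I would record first. Since by construction $\gamma_{1}(A)$ is a dyadic cube at level $\lev A$ lying inside $\widetilde{A}$ with $d(\gamma_{1}(A),\com{\widetilde{A}})\geq q^{\lev A}$, and since for $A\in\mathcal{J}_{1}$ the cube $A$ itself satisfies $d(A,\com{\widetilde{A}})<q^{\lev A}$, the two dyadic cubes $A$ and $\gamma_{1}(A)$ are distinct, hence disjoint, and similarly for $B$ and $\gamma_{1}(B)$. Secondly, because the levels appearing in $\mathcal{L}_{r}$ form an arithmetic progression of step $L(m+1)$ and $\widetilde{A}\in\mathcal{A}_{\lev A+L(m+1)}$ sits exactly one step up from $A$, whenever $\lev A<\lev B$ one automatically has $\lev\widetilde{A}\leq\lev B=\lev\gamma_{1}(B)$.

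Given these facts I would split on the dyadic relation between the cubes $\widetilde{A}$ and $\widetilde{B}$, which admits exactly three possibilities. If $\widetilde{A}\cap\widetilde{B}=\emptyset$, then the triples attached to $A$ and to $B$ lie in disjoint ambient sets and every cross-pair is nested trivially. If $\widetilde{A}=\widetilde{B}$ then $\lev A=\lev B$, so the four cubes $A,\gamma_{1}(A),B,\gamma_{1}(B)$ all have one common level and are pairwise either equal or disjoint by Theorem \ref{th:dyadic}; nesting of the six-set collection is then immediate. If instead $\widetilde{A}\subsetneq\widetilde{B}$ (the other case being symmetric), the second preliminary places the dyadic cube $\widetilde{A}$ in nested position with respect to each of the dyadic cubes $B$ and $\gamma_{1}(B)$, so $\widetilde{A}$ is either contained in or disjoint from each of them; combining these four branches with $A\cup\gamma_{1}(A)\subseteq\widetilde{A}$ shows in every possibility that $A\cup\gamma_{1}(A)$ is either contained in $B\cup\gamma_{1}(B)$ or disjoint from it, and the remaining pairwise checks among the six sets follow by the same reduction.

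The step I expect to require the most care is the subcase $\widetilde{A}=\widetilde{B}$, where one must rule out the unwanted coincidences $\gamma_{1}(A)=B$ or $\gamma_{1}(A)=\gamma_{1}(B)$. The latter is immediate from injectivity of $\gamma_{1}$ on $\mathcal{J}_{1}\cup\mathcal{J}_{3}$, whereas the former is exactly the place where the slack provided by the counting inequality $Y_{L}^{A}/X_{L}^{A}>2$ enters: it allows the inductive construction of $\gamma_{1}$ to be arranged so that its image is kept disjoint from $\mathcal{J}_{1}\cup\mathcal{J}_{3}$, which in turn makes the extension $\gamma_{1}(C):=\gamma_{1}^{-1}(C)$ on the image unambiguous and prevents paired chains of length greater than two. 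Once this combinatorial point is made, the three cases above deliver the nestedness claimed.
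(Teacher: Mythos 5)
Your proof is correct and rests on the same observations as the paper's: that $A\cup\gamma_{1}(A)\subseteq\widetilde{A}$, that $\gamma_{1}(A)$ is a dyadic cube of level $\lev A$ with $\widetilde{\gamma_{1}(A)}=\widetilde{A}$, and that levels in $\mathcal{L}_{r}$ step by $L(m+1)$ so that $\lev A<\lev B$ forces $\lev\widetilde{A}\leq\lev B$. The paper's own proof is a one-line reduction to the proof of Lemma \ref{lem:colH} with $\tau$ formally replaced by $\gamma_{1}$; there the case analysis is organized by which of the cubes $A$, $\gamma_{1}(A)$, $B$, $\gamma_{1}(B)$ intersect. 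You instead organize the same content by the dyadic relation between the arithmetic predecessors $\widetilde{A}$ and $\widetilde{B}$ (disjoint, equal, or strictly nested). Either case decomposition is exhaustive and the verifications in each branch are the same.

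One small correction of emphasis. In the subcase $\widetilde{A}=\widetilde{B}$, the exclusion $\gamma_{1}(A)\neq B$ is not something the inductive construction needs to be \emph{arranged} to guarantee, nor is the counting inequality $Y_{L}^{A}/X_{L}^{A}>2$ invoked for it. It is automatic from the defining inequalities: every $B\in\mathcal{J}_{1}\cup\mathcal{J}_{3}$ satisfies $d(B,\com{\widetilde{B}})<q^{\lev B}$, while $\gamma_{1}(A)$ satisfies $d(\gamma_{1}(A),\com{\widetilde{\gamma_{1}(A)}})\geq q^{\lev\gamma_{1}(A)}$, so $\gamma_{1}(\mathcal{J}_{1}\cup\mathcal{J}_{3})\cap(\mathcal{J}_{1}\cup\mathcal{J}_{3})=\emptyset$ outright. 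The counting inequality is used only to ensure that enough interior cubes exist to define $\gamma_{1}$ injectively; once $\gamma_{1}$ is defined, the disjointness of its image from $\mathcal{J}_{1}\cup\mathcal{J}_{3}$, and hence the unambiguity of the extension $\gamma_{1}(C):=\gamma_{1}^{-1}(C)$ on the image, come for free.
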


\begin{proof}
The proof is in fact nothing else than the proof of Lemma \ref{lem:colH} with
$\tau$ replaced by $\gamma_{1}.$
\end{proof}

Analogously, we define an injection $\gamma_{2}:\tau(\mathcal{J}_{2})\cup
\tau(\mathcal{J}_{3})\rightarrow\mathcal{A},$ such that we have $\gamma
_{2}(\tau(A))\subseteq\widetilde{\tau(A)}$ and%
\[
d(\gamma_{2}(\tau(A)),\com{\widetilde{\tau(A)}})\geq q^{\operatorname*{lev}A}%
\]
and extend it to $\tau(\mathcal{J}_{2})\cup\tau(\mathcal{J}_{3})\cup\gamma
_{2}(\tau(\mathcal{J}_{2}))\cup\gamma_{2}(\tau(\mathcal{J}_{3}))$ by defining
for $A\in\gamma_{2}(\tau(\mathcal{J}_{2}))\cup\gamma_{2}(\tau(\mathcal{J}%
_{3})):$%
\[
\gamma_{2}(A):=\gamma_{2}^{-1}(A).
\]
Again it follows as in Lemma \ref{lem:colJ1} that

\begin{lemma}
\label{lem:colJ2}The collections%
\[
\{\tau(A),\gamma_{2}(\tau(A)),\tau(A)\cup\gamma_{2}(\tau(A)):A\in
\mathcal{J}_{2}\}\quad\text{and\quad}\{\tau(A),\gamma_{2}(\tau(A)),\tau
(A)\cup\gamma_{2}(\tau(A)):A\in\mathcal{J}_{3}\}
\]
are nested.
\end{lemma}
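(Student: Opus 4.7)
The plan is to repeat verbatim the argument of Lemma \ref{lem:colH} (equivalently Lemma \ref{lem:colJ1}) with the substitutions: in Lemma \ref{lem:colH} the role of $A$ is now played by $\tau(A)$, and the role of $\tau(A)$ is played by $\gamma_2(\tau(A))$. The single geometric fact driving the proof of Lemma \ref{lem:colH} is that for $A\in\mathcal{H}$ one has $\widetilde{A}=\widetilde{\tau(A)}$, which yields the chain $B\subset A \Rightarrow \widetilde{B}=\widetilde{\tau(B)}\subseteq A \Rightarrow \tau(B)\subseteq A$ used to conclude that $B\cup\tau(B)$ fits inside $A$. Here the corresponding ingredient is that $\gamma_2(\tau(A))\subseteq \widetilde{\tau(A)}$, from which $\tau(A)$ and $\gamma_2(\tau(A))$ share the arithmetic predecessor $\widetilde{\tau(A)}$.

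First I would record the compatibility explicitly: since $\gamma_2(\tau(A))$ is, by construction, a dyadic cube contained in $\widetilde{\tau(A)}$ at the same level of the arithmetic progression (\ref{eq:defL}) as $\tau(A)$, we have $\widetilde{\gamma_2(\tau(A))}=\widetilde{\tau(A)}$. This is the analogue of the defining property of the collection $\mathcal{H}$.

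Next I fix $A,B$ both in $\mathcal{J}_2$ (respectively both in $\mathcal{J}_3$) with $A\neq B$ and, exactly as in Lemma \ref{lem:colH}, consider the three nontrivial pairs
\[
(\tau(A),\,\tau(B)\cup\gamma_2(\tau(B))),\quad (\gamma_2(\tau(A)),\,\tau(B)\cup\gamma_2(\tau(B))),\quad (\tau(A)\cup\gamma_2(\tau(A)),\,\tau(B)\cup\gamma_2(\tau(B))).
\]
For each pair I assume the intersection is nonempty and split according to which of the four atomic intersections $\tau(A)\cap\tau(B)$, $\tau(A)\cap\gamma_2(\tau(B))$, $\gamma_2(\tau(A))\cap\tau(B)$, $\gamma_2(\tau(A))\cap\gamma_2(\tau(B))$ is nonempty. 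In each subcase the dyadic-cube property gives either equality or a strict inclusion of the two cubes; equality is ruled out by the injectivity of $\gamma_2$ (together with $A\neq B$); and a strict inclusion $B'\subset A'$ in any of these atomic pairs then upgrades to $\widetilde{B'}\subseteq A'$ and hence to the inclusion of the corresponding union into $A'$, via the shared-arithmetic-predecessor property recorded above. This yields the required trichotomy for each pair.

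The only real obstacle is the first step, namely checking the shared-arithmetic-predecessor property for $\tau(A)$ and $\gamma_2(\tau(A))$; once this is in place, the argument is a mechanical transcription of the proof of Lemma \ref{lem:colH}, as the author already signalled in the proof of Lemma \ref{lem:colJ1}. No separate treatment of $\mathcal{J}_2$ and $\mathcal{J}_3$ is needed, since the argument uses only the two properties $\gamma_2(\tau(A))\subseteq\widetilde{\tau(A)}$ and injectivity of $\gamma_2$, both of which hold uniformly on $\tau(\mathcal{J}_2)\cup\tau(\mathcal{J}_3)$.
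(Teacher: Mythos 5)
Your proof is correct and matches the paper's approach: the paper likewise disposes of Lemma \ref{lem:colJ2} by reducing it, via Lemma \ref{lem:colJ1}, to a verbatim transcription of the proof of Lemma \ref{lem:colH} with the substitution $A\mapsto\tau(A)$, $\tau(A)\mapsto\gamma_2(\tau(A))$, driven exactly by the shared-arithmetic-predecessor identity $\widetilde{\gamma_2(\tau(A))}=\widetilde{\tau(A)}$ that you isolate. One small imprecision worth flagging: for the cross-pair $\tau(A)=\gamma_2(\tau(B))$ with $\lev A=\lev B$, equality is not excluded by injectivity of $\gamma_2$; rather it is excluded because membership in $\mathcal{J}_2\cup\mathcal{J}_3$ forces $d(\tau(A),\com{\widetilde{\tau(A)}})<q^{\lev A}$ while the construction of $\gamma_2$ gives $d(\gamma_2(\tau(B)),\com{\widetilde{\tau(B)}})\geq q^{\lev B}$, and the shared-predecessor property turns equality into a contradiction (this mirrors the way Proposition \ref{prop:deccmi} quietly excludes $A=\tau(B)$ in the proof of Lemma \ref{lem:colH}).
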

We can now summarize our considerations and thus prove our main theorem in
this section

\begin{proof}
[Proof of Theorem \ref{th:zerl}]The collections $\{A,\tau(A),A\cup
\tau(A):A\in\mathcal{H}\}$ and $\{A,\tau(A),A\cup\tau(A):A\in\mathcal{I}%
_{i}\}$ for $i\in\{1,2\}$ are nested by the Lemmas \ref{lem:colH}
and \ref{lem:schoenI} respectively. Lemmas \ref{lem:colJ1} and \ref{lem:colJ2}
yield that the collections $\{A,\gamma_{1}(A),A\cup\gamma_{1}(A):A\in
\mathcal{J}_{1}\},\{A,\gamma_{1}(A),A\cup\gamma_{1}(A):A\in\mathcal{J}%
_{3}\},\{\tau(A),\gamma_{2}(\tau(A)),\tau(A)\cup\gamma_{2}(\tau(A)):A\in
\mathcal{J}_{2}\}$ and $\{\tau(A),\gamma_{2}(\tau(A)),\tau(A)\cup\gamma
_{2}(\tau(A)):A\in\mathcal{J}_{3}\}$ are nested. The additional properties of
the mappings $\gamma_{1}$ and $\gamma_{2}$ follow from the definition. We have
thus completely proved the theorem.
\end{proof}

\section{Decomposing Singular Integral Operators}\label{sec:main}
In this section we decompose singular integral operators as absolutely convergent series of simple rearrangements, shifts and two paraproducts.
\subsection{Integral Operators}

We now define the integral operator $K$ with the kernel $k:X\times
X\rightarrow\mathbb{R,}$ $k\in L^{2}(X\times X)$ by%
\[
K(f)(x):=\int_{X}k(x,y)f(y)~d\mu(y)
\]
for $f\in L^{2}_E(X)$ and $E$ is a $\UMD$ Banach space. We assume structural estimates on $k$, in particular a strong off-diagonal decay and also a weak boundedness estimate on the diagonal. This is formalized with the following definition using the orthonormal basis from Lemma \ref{lem:basis}.
First recall that $q$ was the
number with that $q^{\lev A}$ represents roughly the "size" of $A. $
\begin{definition}\label{def:struct}
Let $k\in L^2(X\times X)$. We say that $k$ is an \emph{admissible kernel} if there exist $C_{S}>0$ and $\delta>0$ such that $|\left<k,1_X\otimes 1_X\right>|\leq C_S$ and for all $Q,R\in\mathcal{E}(\mathcal{A})$ with $\lev Q=\lev R$ we have
\begin{equation}
\left\vert \left\langle k,d_{Q}^{(\varepsilon_1)}\otimes d_{R}^{(\varepsilon_2)}\right\rangle \right\vert\leq
C_{S}\left( 1+\frac{d(\parent Q,\parent R)}{q^{\lev Q+1}}\right)  ^{-1-\delta},\quad\varepsilon=(\varepsilon_1,\varepsilon_2)\in\NS\label{eq:struct1}.
\end{equation}
\end{definition}

In this section we provide vector valued norm estimates for integral operators defined by admissible kernels. We point out that the $L^p$-norm of the integral operators depends just on the structural constants $C_S$ and $\delta$, the value of $p$ and the $\BMO$-norms of $K(1),K^*(1)$. In particular, the $L^2$-norm of $k$ is not present in the estimates. From now on, we work with admissible kernels $k$.
We expand the kernel $k$ in the isotropic orthonormal basis introduced in Section \ref{sec:isotr}. The division of $Z$ into three groups (see (\ref{eq:orthz})) gives rise to the following decomposition of the kernel $k$. 
We let
\begin{eqnarray*}
k_1&:=&\sum_{n=0}^{\infty}%
\sum_{A,B\in\mathcal{A}_{-n}}\sum_{Q\in\mathcal{E}(A)}\sum_{R\in
\mathcal{E}(B)}\left\langle k,d_{Q}\otimes d_{R}\right\rangle
d_{Q}\otimes d_{R}, \\
k_2&:=&\sum_{n=0}^{\infty}\sum_{A,B\in\mathcal{A}_{-n}}\sum_{Q\in
\mathcal{E}(A)}\frac{\left\langle k,d_{Q}\otimes1_{B}\right\rangle }{\mu(B)}d_{Q}\otimes 1_{B},\\
k_3&:=&\sum_{n=0}^{\infty}\sum_{A,B\in\mathcal{A}_{-n}}\sum_{R\in
\mathcal{E}(B)}\frac{\left\langle k,1_{A}\otimes d_{R}\right\rangle }%
{\mu(A)}1_{A}\otimes d_{R}.
\end{eqnarray*}
If we decompose $k$ into the isotropic orthonormal basis we see that
\[
k = \int_{X}\int_X k(s,t)d\mu(s)d\mu(t)+k_1+k_2+k_3.
\]
We note the following identities (which follow from $X=%
{\displaystyle\bigcup\limits_{A\in\mathcal{A}_{-n}}}
A$ for every $n\in\mathbb{N}_0)$%
\[
\sum_{B\in\mathcal{A}_{-n}}\left\langle k,d_{Q}\otimes1_{B}\right\rangle
=\left\langle K(1),d_{Q}\right\rangle ,\quad\sum_{A\in\mathcal{A}_{-n}%
}\left\langle k,1_{A}\otimes d_{R}\right\rangle =\left\langle K^{\ast
}(1),d_{R}\right\rangle.
\]
Now we let $f\in L^p_E(X)$ and $g\in L^q_{E'}(X)$ be finite linear combinations of Haar functions and $E$ be a $\UMD$-space. Then we see that $k_2$ has the further decomposition
\begin{equation}
\left<k_{2},g\otimes f\right>=B_2(f,g)+\widetilde{B_2}(f,g),
\end{equation}
where
\begin{eqnarray}
B_2(f,g)&:=&\sum_{n=0}^{\infty}\sum_{A,B\in\mathcal{A}_{-n}}\sum
_{Q\in\mathcal{E}(A)}\frac{\left\langle k,d_{Q}\otimes1_{B}\right\rangle }%
{\mu(B)}\left<d_{Q}\otimes\left(  1_{B}-\frac{\mu(B)}{\mu(A)}1_{A}\right),g\otimes f\right>\quad\text{and}\\
\widetilde{B_2}(f,g) &:=& \sum_{n=0}^{\infty}\sum_{A\in\mathcal{A}_{-n}}\sum_{Q\in\mathcal{E}%
(A)}\frac{\left\langle K(1),d_{Q}\right\rangle }{\mu(A)}\left<d_{Q}%
\otimes 1_{A},g\otimes f\right>.\label{eq:defb1b2}
\end{eqnarray}
We also decompose $k_3$ further and get the following identity, which is valid in $L^2_E(X)$
\begin{equation}
 \int_X k_3(x,y)f(y)d\mu(y) = K_3f(x)+\widetilde{K_3}f(x), 
\end{equation}
where
\begin{eqnarray} 
  K_3f(x)&:=&\sum_{n=0}^{\infty}\sum_{A,B\in\mathcal{A}_{-n}}\sum
_{R\in\mathcal{E}(B)}\frac{\left\langle k,1_{A}\otimes d_{R}\right\rangle
}{\mu(A)}\left<d_{R},f\right>\left(  1_{A}(x)-\frac{\mu(A)}{\mu(B)}1_{B}(x)\right)\quad\text{and}
\\
  \widetilde{K_3}f(x)&:=& \sum_{n=0}^{\infty}\sum_{B\in\mathcal{A}_{-n}}\sum_{R\in\mathcal{E}%
(B)}\frac{\left\langle K^{\ast}(1),d_{R}\right\rangle }{\mu(B)}%
\left<d_{R},f\right>1_{B}(x)\label{eq:defk2k3}
\end{eqnarray}
Furthermore we set 
\[
K_{1}f(x):=\int_{X}k_{1}(x,y)f(y)~d\mu(y).
\]

\subsection{Statement of the Main Theorems}
Recall the definition of the $\sigma$-algebras $\mathcal{F}^{\lev}_k,$ which were defined to be the $\sigma$-algebras generated by the dyadic cubes of level $-k$. In this section (Section \ref{sec:main}), each occurrence of $\BMO$ means the space $\BMO(X,\mathcal{F}^{\lev}_k)$ with these $\sigma$-algebras. Further, we let $E$ be a $\UMD$-space (see Section \ref{sec:SteinBourgain}). We now state the main result in this article.
\begin{theorem}
\label{th:main}Let $K$ be the integral operator defined in the last section satisfying (\ref{eq:struct1}).
Then $K$, initially defined on finite linear combinations of Haar functions, extends linearly to a unique bounded operator on $L^{p}$ for $1<p<\infty$, i.e. we have a constant
$C_{K}$ such that%
\[
\left\Vert K:L_{E}^{p}(X)\rightarrow L_{E}^{p}(X)\right\Vert \leq C_{K}%
\]
and $C_{K}$ depends only on $p,$ the $\BMO$--norms of $K(1)$ and
$K^{\ast}(1)$, the constants $C_{S}$ and $\delta$ coming from the structural estimate (\ref{eq:struct1}) and the $\UMD$ constant of $E$.
\end{theorem}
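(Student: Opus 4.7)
}

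The plan is to bound separately the four operators produced by the expansion of $k$ in the isotropic basis $Z$ of Lemma \ref{lem:basis}: the main piece $K_1$, the ``shift'' pieces coming from $B_2$ and $K_3$, and the two dyadic paraproducts $\widetilde{B_2}$ and $\widetilde{K_3}$. Because $f$ and $g$ are finite linear combinations of Haar functions the series all become finite, so the task is to produce bounds independent of the support and to pass to the limit by density. The constant term $\int\!\int k$ is trivially controlled by $C_S$.

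For the main piece $K_1$, I would rewrite
\[
K_1 f \;=\; \sum_{m=0}^{\infty}\sum_{i=1}^{M_1 q^m}\sum_{l=1}^{M}\sum_{r=0}^{L(m+1)-1}\; K_{m,i,l,r}f,
\]
where $K_{m,i,l,r}$ collects those $\langle k,d_Q\otimes d_R\rangle d_Q\otimes d_R$ with $(A,B)=(\mathrm{pre}(Q),\mathrm{pre}(R))$ running through a single level-$r$ arithmetic strip $\mathcal{L}_r$ inside a well-separated family $\mathcal{G}_l\subseteq\mathcal{C}_{m,i}$ supplied by Propositions \ref{prop:boundednumber} and \ref{prop:deccmi}. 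Theorem \ref{th:zerl} then decomposes $\mathcal{L}_r=\mathcal{H}\cup\mathcal{I}_1\cup\mathcal{I}_2\cup\mathcal{J}_1\cup\mathcal{J}_2\cup\mathcal{J}_3$ so that each piece is a \emph{nested} family linking $A$ to $\tau(A)$ (possibly via $\gamma_1$ or $\gamma_2$). On a nested family the map $d_Q\mapsto d_{\tau(Q)}$ is (up to the normalising factors given by Lemma \ref{lem:supboundd}) a composition of a conditional expectation, a sign-change on martingale differences, and a dual conditional expectation; the $\UMD$ inequality \eqref{eq:UMD} together with Kahane's contraction principle therefore gives
\[
\|K_{m,i,l,r}\|_{L^p_E\to L^p_E}\;\lesssim\;\sup_{(A,B)\in \text{piece}}|\langle k,d_Q\otimes d_R\rangle|\cdot\|\mu(Q)^{-1/2}d_{\tau(Q)}\|_\infty\cdot\beta_p
\;\lesssim\; C_S\,q^{-m(1+\delta)},
\]
by the admissibility estimate \eqref{eq:struct1} (which behaves like $q^{-m(1+\delta)}$ on the annulus $\mathcal{C}_m$) and the normality of $X$. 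Summing over $i,l,r$ produces a factor bounded by $M_1q^m\cdot M\cdot L(m+1)$, so the full $K_1$-series is dominated by $\sum_m (m+1)\,q^{-m\delta}<\infty$. This yields $\|K_1\|_{L^p_E\to L^p_E}\le C(p,\delta,C_S,\beta_p)$.

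The shift pieces $B_2$ and $K_3$ are treated by the same annular scheme. The functions $1_B-\tfrac{\mu(B)}{\mu(A)}1_A$ and $1_A-\tfrac{\mu(A)}{\mu(B)}1_B$ are, up to a universal constant, $L^2$-normalised martingale differences supported on $A\cup B$, so the bilinear forms $B_2$ and $K_3$ have the same structure as $K_1$ but with one of the Haar functions replaced by such a difference. The same combinatorial machinery of Theorem \ref{th:zerl}, applied on the collections $\mathcal{C}_{m,i}$, reduces each of these to a nested sequence of elementary shifts whose $L^p_E$-norms are controlled by $\beta_p$ and whose coefficients again decay like $q^{-m(1+\delta)}$ by \eqref{eq:struct1}. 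The paraproducts $\widetilde{B_2}$ and $\widetilde{K_3}$ are standard vector-valued dyadic paraproducts with symbols $K(1)$ and $K^\ast(1)$ in $\BMO(X,\mathcal{F}^{\lev}_k)$; by Remark \ref{rem:BMO} together with the well-known paraproduct/Stein inequality on $\UMD$-valued $L^p$ (which needs only the martingale-difference basis constructed in Section \ref{sec:cubes} and the $\BMO$-$L^p$ duality), they are bounded by $C(p,\beta_p)\,\max(\|K(1)\|_{\BMO},\|K^\ast(1)\|_{\BMO})$.

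The main obstacle, and the reason the combinatorial work of Section \ref{sec:homtype} is necessary, is the convergence of the $K_1$-series: one must show that the number of pieces produced at level $m$ grows only like $q^m$ (not $q^{2m}$), which is exactly the content of Proposition \ref{prop:boundednumber}, and that on each piece one can realise the shift $A\mapsto \tau(A)$ as a well-controlled operation on martingale differences, which is exactly what Theorem \ref{th:zerl} delivers. Once these two ingredients are combined with the admissibility decay $q^{-m(1+\delta)}$, the gain $q^{m\delta}$ is the decisive cushion that forces absolute convergence of the operator-series and yields the stated bound on $\|K\|_{L^p_E\to L^p_E}$.
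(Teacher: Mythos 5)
Your proposal follows essentially the same route as the paper: expand $k$ in the isotropic basis, decompose along the annular collections $\mathcal{C}_m$ and then $\mathcal{C}_{m,i}$ via Propositions \ref{prop:boundednumber} and \ref{prop:deccmi}, reduce each $\mathcal{C}_{m,i}$ to $O(m+1)$ Figiel-compatible nested families through Theorem \ref{th:zerl} (with the $\gamma_1,\gamma_2$ detours for the $\mathcal{J}$-part), use the admissibility estimate $(\ref{eq:struct1})$ for the $q^{-m(1+\delta)}$ coefficient decay, treat the paraproducts $\widetilde{B_2},\widetilde{K_3}$ by the Stein-type bound for $P(a,\cdot)$ in $\UMD$-valued $L^p$, and close by density and reflexivity. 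One local slip: the factor $\|\mu(Q)^{-1/2}d_{\tau(Q)}\|_\infty$ in your display for $\|K_{m,i,l,r}\|$ is not a bounded constant as written; the correct statement, which is what the paper uses in Theorem \ref{th:figiel1} and Proposition \ref{prop:shift}, is that the shift $d_{Q_k(\tau(A))}\mapsto d_{Q_j(A)}$ on a nested family is bounded on $L^p_E$ by a constant depending only on $p,X$ and the $\UMD$-constant (since both Haar functions are $L^2$-normalized and of comparable level), so only the kernel coefficient supplies the $m$-decay.
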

The starting point and basic idea of the proof is the following decomposition of the bilinear form $\left<Kf,g\right>$:
\begin{equation}\label{eq:decomp}
\left<Kf,g\right>=\widetilde{B_2}(f,g)+\left<\widetilde{K_3}f,g\right>+\sum_{m=0}^\infty \left<k_{1,m},g\otimes f\right>+B_{2,m}(f,g)+\left<K_{3,m}f,g\right>,
\end{equation}
where these operators are defined in (\ref{eq:defb1b2}),(\ref{eq:defk2k3}),(\ref{eq:defk1m}),(\ref{eq:defb1m}) and (\ref{eq:defk2m}). Clearly, we assumed here that $k$ has mean zero with respect to the product measure $\mu\otimes\mu$.
In fact, as we will see in the proof of Theorem \ref{th:k1} and the proof of Theorem \ref{th:k3}, this decomposition can be further split as
\begin{equation}
K=P_{K(1)}^*+P_{K^*(1)}+\sum_{m=0}^\infty\sum_{i=1}^{M_1 q^m}\left(\sum_{j,k=1}^{N-1} T_{m,i}^{(j,k)}\circ\mathcal{M}_{m,i}^{(j,k)}+\sum_{j=1}^{N-1}W_{m,i}^{(j)}\circ \widetilde{\mathcal{M}}_{m,i}^{(j)}+\sum_{k=1}^{N-1}U_{m,i}^{(k)}\circ \mathcal{M}_{m,i}^{(k)}\right),\label{eq:decomp2}
\end{equation}
where $P_{K(1)}$ and $P_{K^*(1)}$ are paraproducts defined in the proof of Theorem \ref{th:k3}, $T_{m,i}^{(j,k)},W_{m,i}^{(j)},U_{m,i}^{(k)}$ are shift and rearrangement operators defined in Section \ref{sec:rearr} and the operators $\mathcal{M}$ are suitable Haar multipliers. The five summands in (\ref{eq:decomp}) correspond to the summands in (\ref{eq:decomp2}) in the same order. 

\begin{remark}
We note explicitly that the constant $C_K$ in the last theorem does \emph{not} depend on the $L^2$-norm of $k(x,y)$, which is the crucial fact about the statement. It thus can be shown that 
\begin{enumerate}
\item Theorem \ref{th:main} yields a direct generalization of T. Figiel's $T(1)$ theorem (\cite{Figiel1990}) to spaces of homogeneous type and
\item Theorem \ref{th:main} yields a direct generalization of Coifmans $T(1)$ theorem (as presented in \cite{Christ1990book}, for the origin of the method see also \cite{CoifmanJonesSemmes1989}) to vector valued singular integral operators given by standard kernels. 
\end{enumerate}

\end{remark}

According to the decomposition of $\mathcal{C}$ we split 
$k_{1},$ $B_{1}$ and $K_{2}$ further and define
\begin{align}
k_{1,m}:=  & \sum_{(A,B)\in\mathcal{C}_{m}}\sum_{Q\in\mathcal{E}(A)}\sum
_{R\in\mathcal{E}(B)}\left\langle k,d_{Q}\otimes d_{R}\right\rangle
d_{Q}\otimes d_{R},\quad\text{in }L^2(X\times X)\label{eq:defk1m},\\
B_{2,m}(f,g):=  & \sum_{(A,B)\in\mathcal{C}_{m}}\sum_{Q\in\mathcal{E}(A)}%
\frac{\left\langle k,d_{Q}\otimes1_{B}\right\rangle }{\mu(B)}%
\left<d_{Q}\otimes\left(  1_{B}-\frac{\mu(B)}{\mu(A)}1_{A}\right),g\otimes f\right> \label{eq:defb1m} ,\\
K_{3,m}f:=  & \sum_{(A,B)\in\mathcal{C}_{m}}\sum_{R\in\mathcal{E}(B)}%
\frac{\left\langle k,1_{A}\otimes d_{R}\right\rangle }{\mu(A)}%
\left<d_{R},f\right>\left(  1_{A}-\frac{\mu(A)}{\mu(B)}1_{B}\right),\quad\text{in }L^2_E(X).\label{eq:defk2m}
\end{align}
Associated to the kernel $k_{1,m}$ we define the integral operator
\[
K_{1,m}(f)(x):=\int_{X}k_{1,m}(x,y)f(y)~d\mu(y).
\]
In later sections we prove the following theorems, from which our main
result (Theorem \ref{th:main}) follows.\newcounter{mycount} \setcounter{mycount}{\value{theorem}}
In the subsequent theorem, $\delta$ is the positive number coming from the structural estimate (\ref{eq:struct1}) and $q$ is the constant appearing in Theorem \ref{th:dyadic}.
\begin{theorem}
\label{th:k1}For all $1<p<\infty$ there exists a constant $C_{p}$ depending only on $p,X$, the $\UMD$ constant of $E$ and $C_S$ from (\ref{eq:struct1}), such that for all $f\in L^{p}_E(X),g\in L^{p'}_{E'}(X)$, which are finite linear combinations of Haar functions, the operators $K_{1,m},K_{3,m}$ and the bilinear form $B_{2,m}$ satisfy the following estimates:
\begin{align}
\left\Vert K_{1,m}(f)\right\Vert _{L^{p}_E(X)} & \leq C_{p}(m+1)q^{-m\delta}\left\Vert
f\right\Vert _{L^{p}_E(X)}, \label{eq:K1}\\
\left\Vert K_{3,m}(f)\right\Vert _{L^{p}_E(X)} & \leq C_{p}(m+1)q^{-m\delta}\left\Vert
f\right\Vert _{L^{p}_E(X)}, \label{eq:K2}\\
\left|B_{2,m}(f,g)\right| & \leq C_{p}(m+1)q^{-m\delta}\left\Vert
f\right\Vert _{L^{p}_E(X)}\left\|g\right\|_{L^{p'}_{E'}(X)}.\label{eq:K4}
\end{align}
Here, $p'=p/(p-1)$ denotes the conjugate exponent to $p$.
\end{theorem}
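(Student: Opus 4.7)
The plan is to exploit the decompositions built up in Section \ref{sec:homtype} in order to reduce each of the three estimates to a sum of uniformly bounded rearrangement or shift operators, weighted by the kernel coefficients, where the structural estimate (\ref{eq:struct1}) provides the decay that beats the count of sub-collections. The key observation is that for a pair $(A,B) \in \mathcal{C}_{m}$ and Haar functions $d_Q, d_R$ with $Q \in \mathcal{E}(A)$, $R \in \mathcal{E}(B)$, one has $d(\parent Q, \parent R) = d(A,B) \gtrsim q^{m + \lev Q}$, so (\ref{eq:struct1}) yields the coefficient bound
\[
\bigl|\langle k, d_{Q}^{(\varepsilon_1)} \otimes d_{R}^{(\varepsilon_2)}\rangle\bigr| \leq C_{S}' q^{-m(1+\delta)}
\]
uniformly over all $(A,B)\in \mathcal{C}_m$ and $\varepsilon \in \NS$.

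First I would peel off the outer layer using Proposition \ref{prop:boundednumber} to write $\mathcal{C}_m = \bigcup_{i=1}^{M_1 q^m}\mathcal{C}_{m,i}$, splitting $K_{1,m} = \sum_{i} K_{1,m,i}$ and similarly for $B_{2,m}$, $K_{3,m}$; on each $\mathcal{C}_{m,i}$ the map $A \mapsto \tau(A)$ is a bijection between its domain and image. Next, on each $\mathcal{C}_{m,i}$ I would apply Proposition \ref{prop:deccmi} (for a suitably large separation constant $C$) to split into a bounded number of well-separated pieces $\mathcal{G}_l$, and then, on each such $\mathcal{G}_l$, pass to the arithmetic-progression levels $\mathcal{L}_r$ defined in (\ref{eq:defL}) for $0 \leq r \leq L(m+1)-1$. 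At this point Theorem \ref{th:zerl} applies and decomposes each $\mathcal{L}_r$ into the six nested subcollections $\mathcal{H}, \mathcal{I}_1, \mathcal{I}_2, \mathcal{J}_1, \mathcal{J}_2, \mathcal{J}_3$.

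On each nested subcollection I would define (as anticipated in (\ref{eq:decomp2})) the shift/rearrangement operators $T_{m,i}^{(j,k)}, W_{m,i}^{(j)}, U_{m,i}^{(k)}$ acting on the Haar expansion by sending $d_Q^{(j)} \otimes d_R^{(k)}$--type frequencies to their $\tau$-images, together with the appropriate Haar multipliers $\mathcal{M}$ absorbing the coefficients $\langle k, d_Q^{(\varepsilon_1)} \otimes d_R^{(\varepsilon_2)}\rangle$ (normalized by dividing out the worst-case $q^{-m(1+\delta)}$). The nesting from Theorem \ref{th:zerl} is exactly what allows each individual rearrangement/shift operator to be bounded on $L^p_E$, uniformly in $m$ and $i$, by a constant $C_p$ depending only on $p$, the $\UMD$ constant of $E$, and $X$; Kahane's contraction principle and the $\UMD$ inequality (\ref{eq:UMD}) reduce the norm estimate to the boundedness of a projection onto a nested collection of martingale differences, which is precisely what \emph{nestedness} allows.

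Combining these pieces gives
\[
\|K_{1,m} f\|_{L^p_E} \leq \sum_{i=1}^{M_1 q^m} \sum_{r=0}^{L(m+1)-1} (\text{const}) \cdot C_S q^{-m(1+\delta)} \cdot C_p \|f\|_{L^p_E} \lesssim C_p (m+1) q^{-m\delta} \|f\|_{L^p_E},
\]
where the factor $M_1 q^m$ counts the $\mathcal{C}_{m,i}$'s, the factor $L(m+1)$ counts the arithmetic progression levels, and $q^{-m(1+\delta)}$ is the coefficient decay. The same scheme handles $K_{3,m}$ (using the companion mapping $\gamma_2$) and the bilinear form $B_{2,m}$, where the difference $1_B - (\mu(B)/\mu(A)) 1_A$ plays the role of an $L^\infty$-normalized Haar-like function, so Lemma \ref{lem:supboundd} together with the normality of $X$ reduces estimate (\ref{eq:K4}) to the same rearrangement bounds in duality. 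The main obstacle I expect is Step~5: verifying uniform boundedness of the rearrangement operators on the nested pieces arising from $\mathcal{H}, \mathcal{I}_j, \mathcal{J}_j$, in particular handling the auxiliary injections $\gamma_1, \gamma_2$ for $\mathcal{J}_3$ (where $A$ and $\tau(A)$ both sit near boundaries of their arithmetic predecessors). All subsequent arithmetic is routine--the whole point of the elaborate combinatorial setup in Section \ref{sec:homtype} is to guarantee that the count of pieces stays at $O(m q^m)$, so that the geometric decay $q^{-m(1+\delta)}$ extracted from (\ref{eq:struct1}) yields the summable bound $(m+1) q^{-m\delta}$.
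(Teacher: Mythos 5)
Your overall architecture is the same as the paper's: the paper also writes $K_{1,m}=\sum_{i=1}^{M_1q^m}\sum_{j,k}T_{m,i}^{(j,k)}\circ\mathcal{M}_{m,i}^{(j,k)}$ (and the analogous expressions for $K_{3,m}$ and $B_{2,m}$ with $U_{m,i}^{(k)}$, $W_{m,i}^{(j)}$ and Haar multipliers), bounds the multiplier norms by the coefficient decay obtained from (\ref{eq:struct1}) together with $d(A,\tau(A))\geq q^{m-1+\lev A}$, and pays the factor $M_1q^m$ for the number of subcollections $\mathcal{C}_{m,i}$; the only organizational difference is that the paper packages the factor $(m+1)$ as the operator-norm bound of Proposition \ref{prop:shift} (whose proof, in Section \ref{sec:boundwithoutfigiel}, performs the splitting into $\mathcal{G}_l$, $\mathcal{L}_r$, $\mathcal{H},\mathcal{I}_j,\mathcal{J}_j$ that you carry out inside the proof of the theorem).

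However, the step you defer as ``the main obstacle I expect'' is a genuine gap, and the assertion that ``the nesting from Theorem \ref{th:zerl} is exactly what allows each individual rearrangement/shift operator to be bounded'' is not accurate for the $\mathcal{J}$-collections. Theorem \ref{th:zerl} gives nestedness of $\{A,\tau(A),A\cup\tau(A)\}$ only over $\mathcal{H},\mathcal{I}_1,\mathcal{I}_2$; over $\mathcal{J}_1,\mathcal{J}_2,\mathcal{J}_3$ the nested families involve the auxiliary cubes $\gamma_1(A)$ or $\gamma_2(\tau(A))$, not $\tau(A)$, so the pairs $(\tau,\mathcal{J}_i)$ do \emph{not} satisfy Figiel's compatibility condition and Theorems \ref{th:figiel1}, \ref{th:figiel2} do not apply directly. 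The paper closes this by factorization: on $\mathcal{J}_1$, for instance, $T_{m,i}^{(j,k),\mathcal{J}_1}=T_{\gamma_1}^{(j,1)}\circ T_{\rho^{-1}}^{(1,k)}$ with $\rho=\tau\circ\gamma_1$ on $\gamma_1(\mathcal{J}_1)$, where boundedness of $T_{\gamma_1}$ follows from Lemma \ref{lem:colJ1}, but boundedness of $T_\rho$ requires re-running the $\mathcal{I}$-type two-coloring decomposition for the new pair $(\gamma_1(\mathcal{J}_1),\rho)$ via the Remark after Lemma \ref{lem:schoenI} --- this is precisely why $\gamma_1$ was constructed to satisfy $d(\gamma_1(A),\com{\widetilde{A}})\geq q^{\lev A}$; moreover the operators $U_{m,i}^{(k)}$ and $W_{m,i}^{(j)}$ on $\mathcal{J}_1$ are not plain compositions but need the representations involving the ratios $\sqrt{\mu(\tau(A))/\mu(\gamma_1(A))}$, their uniform boundedness (normality of $X$), and Kahane's contraction principle, with analogous constructions via $\gamma_2$ and $\gamma_2\circ\tau\circ\gamma_1$ for $\mathcal{J}_2,\mathcal{J}_3$. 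Without this argument the uniform-in-$m$ bounds on the $\mathcal{J}$-pieces, and hence estimates (\ref{eq:K1})--(\ref{eq:K4}), remain unproven; it is the technical heart of the reduction rather than routine bookkeeping.
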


\begin{remark}
For this theorem, we need the $L^p$-boundedness of rearrangement and shift operators, which will be introduced in Section \ref{sec:rearr} and the boundedness of these operators will be proved in Sections \ref{sec:figiel} and \ref{sec:boundwithoutfigiel}.
\end{remark}

\begin{theorem}
\label{th:k3}For all $1<p<\infty$ there exists a constant $C_{p},$ which depends only on $p,X$ and the $\UMD$ constant of $E$ such that for all $f\in L^{p}_E(X),g\in L^{p'}_{E'}(X)$ which are finite linear combinations of Haar functions, the operator $\widetilde{K_{3}}$ and the bilinear form $\widetilde{B_2}$
satisfy the estimates%
\begin{align}
\left| \widetilde{B_2}(f,g)\right| & \leq C_{p}\left\Vert K(1)\right\Vert _{\BMO}\left\Vert f\right\Vert _{L^{p}_E(X)}\left\|g\right\|_{L^{p'}_{E'}(X)}\label{eq:K5}, \\
\left|\left< \widetilde{K_{3}}f,g\right>\right| & \leq C_{p}\left\Vert K^{*}(1)\right\Vert
_{\BMO}\left\Vert f\right\Vert _{L^{p}_E(X)}\left\|g\right\|_{L^{p'}_{E'}(X)}\label{eq:K3}.
\end{align}
Again, $p'=p/(p-1)$ is the conjugate exponent to $p$.
\end{theorem}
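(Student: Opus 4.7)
The plan is to identify $\widetilde{B_2}$ as a dyadic martingale paraproduct with symbol $K(1)\in \BMO$, and $\widetilde{K_3}$ as the (Banach-space) adjoint of a dyadic paraproduct with symbol $K^*(1)\in \BMO$, and then to invoke the standard paraproduct bound in the UMD setting.

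For a scalar function $b$, the construction of the Haar system (Definition~\ref{def:basisfunct}) gives the identity
\[
\Delta^{\lev}_n b\;=\; \sum_{A\in\mathcal{A}_{-(n-1)}}\sum_{Q\in\mathcal{E}(A)}\langle b,d_Q\rangle\,d_Q,
\]
where $\Delta^{\lev}_n:=\mathbb{E}(\,\cdot\,|\mathcal{F}^{\lev}_n)-\mathbb{E}(\,\cdot\,|\mathcal{F}^{\lev}_{n-1})$. Moreover $\langle 1_A,f\rangle/\mu(A)$ equals the value of $\mathbb{E}(f|\mathcal{F}^{\lev}_{n-1})$ on the atom $A\in\mathcal{A}_{-(n-1)}$. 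A direct reindexing of the series \eqref{eq:defb1b2} and \eqref{eq:defk2k3} then yields
\[
\widetilde{B_2}(f,g)\;=\;\langle P_{K(1)}f,\,g\rangle\quad\text{and}\quad \langle\widetilde{K_3}f,\,g\rangle\;=\;\langle f,\,P_{K^*(1)}g\rangle,
\]
where, for scalar symbol $b$, the dyadic paraproduct is defined by
\[
P_b h\;:=\;\sum_{n\geq 1}\bigl(\mathbb{E}(h\,|\,\mathcal{F}^{\lev}_{n-1})\bigr)\cdot\Delta^{\lev}_n b.
\]
Hence both \eqref{eq:K5} and \eqref{eq:K3} reduce to the single paraproduct inequality
\[
\|P_b h\|_{L^p_E(X)}\;\leq\;C_p\,\|b\|_{\BMO}\,\|h\|_{L^p_E(X)},\qquad 1<p<\infty,
\]
applied with $b=K(1)$ and, by duality (using that $E^*$ is UMD whenever $E$ is), with $b=K^*(1)$.

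I would prove the paraproduct inequality by the classical martingale argument. The partial sums of $P_b h$ form an $E$-valued martingale with differences $e_n=\bigl(\mathbb{E}(h|\mathcal{F}^{\lev}_{n-1})\bigr)\Delta^{\lev}_n b$ that are $\mathcal{F}^{\lev}_n$-measurable and satisfy $\mathbb{E}(e_n|\mathcal{F}^{\lev}_{n-1})=0$. The UMD property converts the $L^p_E$ norm of the martingale into a Rademacher-randomized norm of $\sum_n r_n(\omega) e_n$; since $\Delta^{\lev}_n b$ is scalar, Kahane's contraction principle (Theorem~\ref{th:kahane}) lets one pass $\Delta^{\lev}_n b$ through the Rademacher average, so the problem reduces to controlling a Carleson-type sum in which the weights are $|\Delta^{\lev}_n b|^2$ and the test function is the maximal function of $\|h\|_E$. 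A direct consequence of the definition of $\BMO$ in \eqref{eq:BMOdef} is that $(|\Delta^{\lev}_n b|^2)_n$ is a Carleson sequence with constant $\|b\|_{\BMO}^2$; the Carleson embedding theorem together with Doob's maximal inequality for the $E$-valued martingale $\mathbb{E}(h|\mathcal{F}^{\lev}_{n-1})$ then completes the estimate.

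The main obstacle is to ensure that the constant appearing in the paraproduct bound depends only on $p$ and on the UMD constant of $E$, not on any finer geometric structure of the dyadic cubes of $(X,d,\mu)$. This is automatic because every step above uses only the martingale structure of the filtration $\{\mathcal{F}^{\lev}_k\}$ and universal tools (UMD, Kahane, Doob, Carleson embedding) that are valid in any probability space; the combinatorial material of Section~\ref{sec:homtype} plays no role here, the geometric content of $K$ having been absorbed into the $\BMO$-norms of $K(1)$ and $K^*(1)$.
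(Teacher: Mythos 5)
Your reduction is exactly the paper's: the paper introduces the paraproduct $P_a$ in (\ref{eq:para}), observes that $P(a,h)=\sum_k(\mathbb{E}_k h)(\Delta_{k+1}a)$ is its adjoint, and identifies $\widetilde{B_2}(f,g)=\langle P(K(1),f),g\rangle$ and $\langle\widetilde{K_3}f,g\rangle=\langle f,P(K^{*}(1),g)\rangle$; your operator $P_b$ is precisely the paper's $P(b,\cdot)$, and your duality remark (that $E'$ is again $\UMD$) matches the paper's use of reflexivity. The two proofs part ways only at the paraproduct inequality (\ref{eq:Paf}): the paper does not prove it, but cites Figiel (1990), pp.~108--109, and Figiel--Wojtaszczyk (2001), p.~593, explicitly noting that the filtration $\mathcal{F}^{\lev}_k$ is \emph{regular} (a consequence of the normality of $X$), which is a hypothesis of the cited result. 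You instead sketch a proof, and this is where there is a genuine gap.

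The step ``Kahane's contraction principle lets one pass $\Delta^{\lev}_n b$ through the Rademacher average, so the problem reduces to a Carleson-type sum with weights $|\Delta^{\lev}_n b|^2$ against the maximal function of $\|h\|_E$'' does not work for a general $\UMD$ space. Kahane's principle (Theorem \ref{th:kahane}) only lets you remove uniformly bounded scalar coefficients: using $\|\Delta^{\lev}_n b\|_\infty\lesssim\|b\|_{\BMO}$ (which already requires regularity of the filtration, i.e.\ geometry of the cubes, contrary to your closing claim that everything holds ``in any probability space'') leaves you with the randomized sum $\sum_n r_n\,\mathbb{E}_{n-1}h$, a sum of repeated averages rather than martingale differences, which is not controlled by $\|h\|_{L^p_E}$; and it certainly does not produce the quadratic expression $\sum_n|\Delta^{\lev}_n b|^2\,\|\mathbb{E}_{n-1}h\|_E^2$. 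Replacing a Rademacher average by such an $\ell^2$-expression of norms is legitimate only when $E$ has type~2 and cotype~2, i.e.\ essentially when $E$ is a Hilbert space; so your Carleson-embedding-plus-Doob argument proves the scalar or Hilbertian case ($p=2$ in particular), but not the claimed bound with a constant depending only on $p$ and the $\UMD$ constant of $E$. To close the gap you must either invoke the known $\UMD$-valued paraproduct theorem for regular filtrations, as the paper does, or reproduce one of its genuine proofs (e.g.\ via Bourgain's martingale version of Stein's inequality for $\UMD$ spaces, or Figiel's decomposition techniques); in either case the regularity of $\mathcal{F}^{\lev}_k$, hence the geometry of $X$, does enter.
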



\begin{proof}
For the proof we use paraproduct operators which are formally given by
\begin{equation}
(P_{a}f)(x)=\sum_{n=0}^{\infty}\sum_{B\in\mathcal{A}_{-n}}\sum
_{R\in\mathcal{E}(B)}\frac{\left\langle a,d_{R}\right\rangle }{\mu
(B)}\left\langle d_{R},f\right\rangle 1_{B}(x),\label{eq:para}%
\end{equation}
where $a\in \BMO.$ Observe that $P_{a}$ is the linear extension of the
mapping%
\begin{align*}
d_{R}  & \longmapsto\left\langle a,d_{R}\right\rangle \frac{1_{B}}{\mu
(B)},\quad R\in\mathcal{E}(B)\quad\text{and}\\
1_{X}  & \longmapsto0,
\end{align*}
so that for finite linear combinations of Haar functions $f,g$ we have
\begin{equation}\label{eq:skalarident}
\left|\widetilde{B_2}(f,g)\right|=\left|\left<P_{K(1)}g,f\right>\right|\quad\text{and}\quad \left|\left<\widetilde{K_{3}}f,g\right>\right|=\left|\left<P_{K^*(1)}f,g\right>\right|.
\end{equation}
Now let both $f:X\rightarrow E$ and $a:X\rightarrow \mathbb{R}$ be finite linear combinations of Haar functions.
We then consider the bilinear operation%
\[
P(a,f):=\sum_{k}(\mathbb{E}_{k}f)(\Delta_{k+1}a),
\]
which has an immediate connection to a paraproduct operator, since we can
compute for $R\in\mathcal{E}(B)$%
\[
\left\langle P(a,f),d_{R}\right\rangle =\sum_{k}\left\langle (\mathbb{E}%
_{k}f)(\Delta_{k+1}a),d_{R}\right\rangle =\left\langle a,d_{R}\right\rangle
\frac{1}{\mu(B)}\int_{B}f~d\mu=\left\langle f,P_{a}d_{R}\right\rangle .
\]
Additionally, $\left\langle P(a,f),1\right\rangle =\left\langle f,P_{a}%
1\right\rangle ,$ since%
\begin{equation}
\left\langle P(a,f),1\right\rangle =\sum_{k}\mathbb{E[(E}_{k}f)\left(
\Delta_{k+1}a\right)  ]=0=\left\langle f,P_{a}1\right\rangle,\label{eq:const1}%
\end{equation}
so $P(a,\cdot)$ is the adjoint of $P_a$. Now we use a result that can be found in \cite{Figiel1990}, pages 108-109 and \cite{FigielWojtaszczyk2001}, page 593, which allows us to deduce the $L^p_E$--boundedness of the operator $P(a,\cdot)$ (note that we have a regular sequence of $\sigma$-algebras $\mathcal{F}^{\lev}_k)$ and the estimate
\begin{equation}
\left\Vert P(a,f)\right\Vert _{L_{E}^{p}(X)}\leq C\left\Vert a\right\Vert
_{BMO}\left\Vert f\right\Vert _{L_{E}^{p}(X)}%
\label{eq:Paf}%
\end{equation}
for $f\in L_{E}^{p}(X)$ and $a\in\BMO$.

With the $L^p_E$-boundedness of $P(a,\cdot)$, (\ref{eq:skalarident}) and the fact that $P_a$ is the adjoint of $P(a,\cdot)$, we finally get that
\[
\left|\widetilde{B_2}(f,g)\right|
=\left|\left<P(K(1),f),g\right>\right|\leq C\left\|K(1)\right\|_{\BMO}\left\|f\right\|_{L^p_E(X)}\left\|g\right\|_{L^{p'}_{E'}(X)}
\]
and
\[
\left|\left< \widetilde{K_{3}}f,g\right>\right|
=\left|\left<f,P(K^*(1),g)\right>\right|\leq C\left\|K^*(1)\right\|_{\BMO}\left\|f\right\|_{L^p_E(X)}\left\|g\right\|_{L^{p'}_{E'}(X)},
\]
since $E$ is a $\UMD$--space and thus reflexive.
\end{proof}

\begin{proof}[Proof of Theorem \ref{th:main}]
For $1/p+1/q=1$, let $f\in L^p_E(X)$ and $g\in L^{p'}_{E'}(X)$ be finite linear combinations of Haar functions, then we have
\[
\left<Kf,g\right>=\widetilde{B_2}(f,g)+\left<\widetilde{K_3}f,g\right>+\sum_{m=0}^\infty \left<k_{1,m},g\otimes f\right>+B_{2,m}(f,g)+\left<K_{3,m}f,g\right>,
\]
where these operators are defined in (\ref{eq:defb1b2}),(\ref{eq:defk2k3}),(\ref{eq:defk1m}),(\ref{eq:defb1m}) and (\ref{eq:defk2m}). 
Thus we obtain from Theorems \ref{th:k1} and \ref{th:k3} that there exists a constant $C_K$ which has only the stated dependences and we have
\[
|\left<Kf,g\right>|\leq C_K \left\|f\right\|_{L^p_E(X)}\left\|g\right\|_{L^{p'}_{E'}(X)}.
\]
Hence for fixed $f\in L^p_E(X)$ which is a finite linear combination of Haar functions, the functional $S_f$ defined by
\[
S_f:g\mapsto\left<Kf,g\right>
\]
is bounded on the subspace $U$ consisting of finite linear combinations of Haar functions of $L^{p'}_{E'}(X)$. Since $U$ is dense in $L^{p'}_{E'}(X)$, it has a unique continuous extension to the whole space $L^{p'}_{E'}(X)$. Recall now that $\UMD$-spaces are reflexive, and so $L^{p'}_{E'}(X)$ is canonically identified with $L^p_E(X)$. Hence there exists $z\in L^p_E(X)$ such that
\[
\left<z,g\right>=\left<Kf,g\right> \text{ for all } g\in U\quad\text{and}\quad \left\|z\right\|_{L^p_E(X)}\leq C_K \left\|f\right\|_{L^p_E(X)}.
\]
We get that $z=Kf$ since they have the same Haar coefficients, and so
\[
\left\|Kf\right\|_{L^p_E(X)}\leq C_K \left\|f\right\|_{L^p_E(X)}
\]
for all finite linear combinations of Haar functions $f$. Since again these functions are dense in $L^p_E(X)$, $K$ has a unique bounded linear extension to all of $L^p_E(X)$ and the theorem is proved.
\end{proof}

The rest of Section \ref{sec:main} is now devoted to proving Theorem \ref{th:k1}.

\subsection{Rearrangement and Shift Operators\label{sec:rearr}}

Definition \ref{def:rearr} (The definition of the injection $\tau$ on $\mathcal{A}_{m,i}$) gives rise to rearrangement and shift operators,
which are closely related to the integral operators $K_1,K_3$ and the bilinear form $B_2$. For $m\in\mathbb{N}_{0},$ $1\leq i\leq M_1q^m$ (see Proposition \ref{prop:boundednumber}) we define 
\begin{align}
U_{m,i}^{(k)}(f):=  & \sum_{A\in\mathcal{A}_{m,i}}\frac{\left\langle d_{Q_k(\tau(A))},f\right\rangle}{\sqrt{\mu(A)}} \left(  1_{A}-\frac
{\mu(A)}{\mu(\tau(A))}1_{\tau(A)}\right) \label{eq:rearr2}\\
T_{m,i}^{(j,k)}(f):=  & \sum_{A\in\mathcal{A}_{m,i}}\left\langle d_{Q_k(\tau(A))},f\right\rangle d_{Q_{j}%
(A)},\label{eq:rearr4}
\end{align}
where $f$ is a finite linear combination of Haar functions and $Q_{j}(A)$ is any enumeration of the elements in $\mathcal{E}(A)$. If the parameter $k$ is greater than the number $N(A)$ of Haar functions corresponding to children of $A$, we simply set $d_{Q_k(A)}\equiv 0$.

\begin{remark}
We see that $U_{m,i}^{(k)}$ is the linear extension of the map
\[
d_{Q_k(\tau(A))}\longmapsto\frac{1}{\sqrt{\mu(A)}}\left(1_{A}-\frac{\mu(A)}{\mu(\tau(A))}1_{\tau(A)}\right),\quad\text{for }1\leq k\leq N(\tau(A)),
\]
with $A\in\mathcal{A}_{m,i}.$ Analogously the mapping $T_{m,i}^{(j,k)}(f) $
is the linear extension of%
\[
d_{Q_k(\tau(A))}\longmapsto d_{Q_{j}(A)},\quad\text{for }1\leq k\leq N(\tau(A)),
\]
where $A\in\mathcal{A}_{m,i}.$
\end{remark}\vspace{0.5cm}

In order to show Theorem \ref{th:k1}, we prove the following
$L^{p}$-bounds of the operators $U_{m,i}^{(k)}$ and $T_{m,i}^{(j,k)}$:

\begin{proposition}
\label{prop:shift}The operators $U_{m,i}^{(k)}$ and $T_{m,i}^{(j,k)}$  satisfy the $L^{p}_E%
(X)$-estimate $(1<p<\infty)$%
\begin{align}
\left\Vert U_{m,i}^{(k)}:L^{p}_E(X)\rightarrow L^{p}_E(X)\right\Vert &\leq C_{p}(m+1),\\
\left\Vert T_{m,i}^{(j,k)}:L^{p}_E(X)\rightarrow L^{p}_E(X)\right\Vert &\leq C_{p}(m+1).
\end{align}
for all $1\leq j,k\leq N-1$, where $C_p$ depends only on $p,X$ and the $\UMD$--constant of $E$. Here, as in Section \ref{sec:homtype}, $N$ is the maximal number of children a dyadic cube can have.
\end{proposition}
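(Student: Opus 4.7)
The plan is to reduce the $L^{p}_{E}$ estimates for $U_{m,i}^{(k)}$ and $T_{m,i}^{(j,k)}$ to a bounded number of martingale transforms on $L^{p}_{E}$, with the linear factor $(m+1)$ entering only once, via the arithmetic level decomposition. As a first step I would apply Proposition \ref{prop:deccmi} to $\mathcal{C}_{m,i}$ with a separation constant $C$ chosen large enough to enable the application of Theorem \ref{th:zerl} below; this yields
\[
\mathcal{A}_{m,i} = \bigcup_{l=1}^{M} \mathcal{A}_{\mathcal{G}_{l}},
\]
with $M$ depending only on $X$, so that by the triangle inequality it suffices to control the sums in $(\ref{eq:rearr2})$ and $(\ref{eq:rearr4})$ restricted to a single $\mathcal{A}_{\mathcal{G}}$. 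Next I would split $\mathcal{A}_{\mathcal{G}}$ into the arithmetic levels $\mathcal{L}_{r}$ of $(\ref{eq:defL})$; there are $L(m+1)$ such levels with $L$ depending only on $X$, and this is exactly where the linear factor $(m+1)$ comes from.

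On each $\mathcal{L}_{r}$ I would invoke Theorem \ref{th:zerl} to obtain the decomposition $\mathcal{L}_{r} = \mathcal{H} \cup \mathcal{I}_{1} \cup \mathcal{I}_{2} \cup \mathcal{J}_{1} \cup \mathcal{J}_{2} \cup \mathcal{J}_{3}$ and handle each piece separately, losing only a constant factor. On $\mathcal{H}$ and on each $\mathcal{I}_{j}$, the collection $\{A, \tau(A), A \cup \tau(A)\}$ is nested; consequently the $L^{2}$-normalized atoms supported on $A \cup \tau(A)$ form a martingale difference sequence for the filtration generated by $\{A \cup \tau(A)\}$, and the Haar functions $d_{Q_{j}(A)}$ supported on the nested family $\{A\}$ form a martingale difference sequence for a suitable refinement of that filtration. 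Consequently, on each such piece, $U_{m,i}^{(k)}$ and $T_{m,i}^{(j,k)}$ can be written as the composition of an isometric identification between two such martingale difference sequences and a Haar multiplier with bounded coefficients (controlled via Lemma \ref{lem:supboundd} and Kahane's contraction principle, Theorem \ref{th:kahane}). Boundedness on $L^{p}_{E}$ with a constant depending only on $p$ and the $\UMD$ constant of $E$ then follows from the defining inequality $(\ref{eq:UMD})$.

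The main obstacle will be the pieces $\mathcal{J}_{1}, \mathcal{J}_{2}, \mathcal{J}_{3}$, where $\tau$ itself does not yield a nested substructure because one of $A$ or $\tau(A)$ lies too close to the boundary of its arithmetic predecessor. The remedy, already encoded in Theorem \ref{th:zerl}, is to replace $\tau$ on the offending side by the auxiliary injection $\gamma_{1}$ or $\gamma_{2}$, which lands in cubes separated from $\com{\widetilde{A}}$ or $\com{\widetilde{\tau(A)}}$ by at least $q^{\lev A}$; on the corresponding collections the sets $A \cup \gamma_{1}(A)$ or $\tau(A) \cup \gamma_{2}(\tau(A))$ are nested and the martingale argument of the previous paragraph applies. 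One must then absorb the mismatch between the natural Haar-type function built from $A \cup \gamma_{i}(A)$ and the actual target $A \cup \tau(A)$ at the level of the Haar coefficients, once again by Kahane's contraction principle, so that the total loss remains $C_{p}(m+1)$ with the claimed dependence on $p$, $X$ and the $\UMD$ constant of $E$.
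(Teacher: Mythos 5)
Your overall reduction --- splitting $\mathcal{C}_{m,i}$ by Proposition \ref{prop:deccmi} into a bounded number of separated collections $\mathcal{G}$, then into the $L(m+1)$ arithmetic levels $\mathcal{L}_r$ of (\ref{eq:defL}) (the sole source of the factor $(m+1)$), then applying Theorem \ref{th:zerl} and handling $\mathcal{H},\mathcal{I}_1,\mathcal{I}_2$ by the nested-collection estimates --- is exactly the skeleton of the paper's proof, and that part is in order; the nested-case martingale argument you sketch is packaged in the paper as Figiel's compatibility condition together with Theorems \ref{th:figiel1} and \ref{th:figiel2}, which you could simply invoke.

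The gap is in your treatment of $\mathcal{J}_1,\mathcal{J}_2,\mathcal{J}_3$. On $\mathcal{J}_1$ the operator to bound sends $d_{Q_k(\tau(A))}$ to $d_{Q_j(A)}$ (and for $U^{(k)}_{m,i}$, $W^{(j)}_{m,i}$ it involves the pair $(A,\tau(A))$), whereas the nested structure provided by Theorem \ref{th:zerl} concerns the pair $(A,\gamma_1(A))$. The discrepancy between $\gamma_1(A)$ and $\tau(A)$ is \emph{not} a coefficient mismatch: Kahane's contraction principle (Theorem \ref{th:kahane}) can only rescale scalar coefficients in front of fixed Haar functions; it cannot transport $d_{Q_k(\tau(A))}$ into $d_{Q_k(\gamma_1(A))}$. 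That transport is itself a rearrangement whose boundedness must be proved, and this is precisely the extra step in the paper: one factors the shift on $\mathcal{J}_1$ as $T_{\gamma_1}^{(j,1)}\circ T_{\rho^{-1}}^{(1,k)}$ with $\rho=\tau\circ\gamma_1$ on $\gamma_1(\mathcal{J}_1)$. The $\gamma_1$-factor is Figiel-compatible by Lemma \ref{lem:colJ1}, but the $\rho$-factor is not covered by Theorem \ref{th:zerl} as stated; its boundedness uses that both $\gamma_1(A)$ and $\tau(A)=\rho(\gamma_1(A))$ lie at distance at least $q^{\lev A}$ from $\com{\widetilde{A}}$ and $\com{\widetilde{\tau(A)}}$ respectively, so that after a further separation decomposition as in Proposition \ref{prop:deccmi} the pair $(\rho,\gamma_1(\mathcal{J}_1))$ satisfies the hypotheses of the Remark after Lemma \ref{lem:schoenI}, and the two-coloring ($\mathcal{I}$-type) argument splits it into nested, Figiel-compatible pieces. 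Only after both rearrangements are in place does the contraction principle legitimately enter, namely to absorb the bounded measure ratios $\sqrt{\mu(\tau(A_l))/\mu(\gamma_1(A_l))}$ appearing in the expansions of $U^{(k)}_{m,i}$ and $W^{(k)}_{m,i}$ on $\mathcal{J}_1$; for $\mathcal{J}_2$ one works with $\gamma_2\circ\tau$, and for $\mathcal{J}_3$ with $\gamma_2\circ\tau\circ\gamma_1$, with the same two-step structure. Without this intermediate rearrangement and its separate nestedness argument, your estimate on the $\mathcal{J}$-pieces does not go through.
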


The rough idea of the proof of these bounds is the following: We prove
a version of Proposition \ref{prop:shift} under the
constraint that we restrict the sum in (\ref{eq:rearr2}) and
(\ref{eq:rearr4}) from $\mathcal{A}_{m,i}$ to a collection that satisfies the so
called Figiel's compatibility condition. In this case we get a bound, which is
independent of $m.$ Thereafter we invoke the decomposition of $\mathcal{A}_{m,i}$ into such subcollections introduced in Section \ref{sec:sep}. 

\subsection{Figiel's Compatibility Condition\label{sec:figiel}}
Here we review the martingale estimates of rearrangement operators that satisfy Figiel's compatibility condition. We follow \cite{Figiel1988} and the expositions \cite{FigielWojtaszczyk2001}, \cite{Mueller2005}.
\begin{definition}
Let $\mathcal{D}\subseteq\mathcal{A}_{m,i}$ be a subset of $\mathcal{A}_{m,i}$
and $\tau:\mathcal{D}\rightarrow\mathcal{A}$ be an injective map. We say that
the pair $(\tau,\mathcal{D)}$ satisfies \emph{Figiel's} \emph{compatibility
condition} if the collection
\[
\mathcal{Z}:=\{A,\tau(A),A\cup\tau(A):A\in\mathcal{D}\}
\]
is nested, $\lev A=\lev \tau(A)$ and $\tau(A)\notin\mathcal{D}$ for all $A\in\mathcal{D}$ 
\end{definition}
Recall that a collection of sets $\mathcal{Z}$ is said to be
\emph{nested}, if for every choice $A,B\in\mathcal{Z}$ it holds that either%
\[
A\subseteq B\quad\text{or\quad}B\subseteq A\quad\text{or\quad}A\cap
B=\emptyset.
\]

We remark that if $(\tau,\mathcal{D})$ satisfies Figiel's compatibility
condition, the pair $(\tau^{-1},\tau(\mathcal{D}))$ also satisfies Figiel's
compatibility condition. Then the following theorems concerning the boundedness of the operators
$T$ and $U$ hold.

\begin{theorem}
\label{th:figiel1}Let $(\tau,\mathcal{D})$ satisfy the compatibility
condition. Then the operator
\[
T_{m,i}^{(j,k),\mathcal{D}}f:=\sum_{A\in\mathcal{D}}\left\langle d_{Q_k(\tau(A))},f\right\rangle d_{Q_{j}(A)}%
\]
is bounded in $L^{p}$ for all $1\leq j,k\leq N-1$ and satisfies the estimate%
\[
\left\Vert T_{m,i}^{(j,k),\mathcal{D}}f\right\Vert _{L^{p}_E(X)}\leq
C_{p}\left\Vert f\right\Vert _{L^{p}_E(X)}%
\]
where $C_{p}$ depends only on $p,X$ and the $\UMD$--constant of $E$.
\end{theorem}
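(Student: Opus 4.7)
My plan is to follow T. Figiel's original martingale approach and reduce the estimate for $T_{m,i}^{(j,k),\mathcal{D}}$ to an application of the $\UMD$ property of $E$ on a filtration that is tailor-made for the nested collection $\mathcal{Z}=\{A,\tau(A),A\cup\tau(A):A\in\mathcal{D}\}$. Writing $c_A := \langle d_{Q_k(\tau(A))},f\rangle \in E$, so that $T_{m,i}^{(j,k),\mathcal{D}}f=\sum_{A\in\mathcal{D}}c_A\,d_{Q_j(A)}$, a first application of $\UMD$ to the global martingale basis from Section \ref{sec:cubes} gives
\[
\Bigl\|\sum_{A\in\mathcal{D}} c_A\,d_{Q_k(\tau(A))}\Bigr\|_{L^p_E}\leq\beta_p\|f\|_{L^p_E},
\]
since the left hand side is the projection of $f$ onto the $E$-valued martingale differences indexed by $\tau(\mathcal{D})$. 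The theorem therefore reduces to the comparison
\[
\Bigl\|\sum_{A\in\mathcal{D}} c_A\,d_{Q_j(A)}\Bigr\|_{L^p_E}\leq C_p\Bigl\|\sum_{A\in\mathcal{D}}c_A\,d_{Q_k(\tau(A))}\Bigr\|_{L^p_E}.
\]

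The decisive step is the construction of an adapted filtration. I would enumerate $\mathcal{D}=(A_l)_{l\geq 1}$ by decreasing level (breaking ties arbitrarily) and build $\sigma$-algebras $\mathcal{G}_l$ inductively, starting with $\mathcal{G}_0=\{\emptyset,X\}$ and refining at step $l$ by adjoining $A_l$, $\tau(A_l)$ and the dyadic cubes of level $\lev A_l$ contained in $A_l\cup\tau(A_l)$. The compatibility hypothesis, namely that $\mathcal{Z}$ is nested, that $\tau(A)\notin\mathcal{D}$ and that $\lev A=\lev\tau(A)$, ensures that this construction is well-defined and that, relative to $(\mathcal{G}_l)_l$, both $(d_{Q_j(A_l)})_l$ and $(d_{Q_k(\tau(A_l))})_l$ are martingale difference sequences: each new Haar function is $\mathcal{G}_l$-measurable, and since $A_l\cup\tau(A_l)$ sits inside a single atom of $\mathcal{G}_{l-1}$ (by nestedness) and both Haar functions have integral zero on that atom, the martingale difference property follows.

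With the filtration in place, I would finish using unconditionality and randomization. Unconditionality of martingale differences in $E$ gives, for any signs $\varepsilon_l\in\{-1,1\}$,
\[
\Bigl\|\sum_l\varepsilon_l c_{A_l}\,d_{Q_j(A_l)}\Bigr\|_{L^p_E}\leq\beta_p\Bigl\|\sum_l c_{A_l}\,d_{Q_j(A_l)}\Bigr\|_{L^p_E},
\]
together with the reverse inequality up to $\beta_p^2$, and the analogous statements on the $\tau$-side. Randomizing by Rademacher variables and applying Kahane's contraction principle (Theorem \ref{th:kahane}) pointwise in $t$, using that $\|d_{Q_j(A_l)}\|_\infty$ and $\|d_{Q_k(\tau(A_l))}\|_\infty$ are both comparable to $\mu(A_l)^{-1/2}=\mu(\tau(A_l))^{-1/2}$ by Lemma \ref{lem:supboundd} (which requires $\lev A_l=\lev\tau(A_l)$), one can swap $d_{Q_j(A_l)}$ for $d_{Q_k(\tau(A_l))}$ inside the randomized sum at the cost of a universal constant, yielding the desired comparison with a constant depending only on $p$, $X$ and the $\UMD$-constant of $E$.

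The main obstacle I anticipate is the filtration construction in the second step: one must verify carefully that at every step $l$ the set $A_l\cup\tau(A_l)$ is contained in a single atom of $\mathcal{G}_{l-1}$, so that both Haar functions have zero conditional expectation on $\mathcal{G}_{l-1}$. This is exactly where the full strength of Figiel's compatibility condition is consumed: the nestedness of $\mathcal{Z}$ rules out configurations where two pairs $(A,\tau(A))$ and $(A',\tau(A'))$ interlock without being nested, the condition $\tau(A)\notin\mathcal{D}$ prevents circular dependencies between the two enumerated sequences, and the common level $\lev A=\lev\tau(A)$ is what makes the $L^\infty$-sizes match so that Kahane's principle produces a clean constant in the third step.
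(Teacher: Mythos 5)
The paper itself gives no proof of Theorem \ref{th:figiel1}; it simply refers to Figiel's argument via the Haar-system results in \cite{Mueller1995}. Your blueprint (adapted filtration, UMD, randomization, Kahane) is indeed the correct family of ideas, but as written the final step contains a genuine gap.

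The problem is the claimed pointwise application of Kahane's contraction principle to ``swap $d_{Q_j(A_l)}$ for $d_{Q_k(\tau(A_l))}$ inside the randomized sum.'' Theorem \ref{th:kahane} only compares $\sum_k a_k r_k e_k$ with $\sum_k r_k e_k$ for scalar coefficients $|a_k|\le 1$ multiplying the \emph{same} vectors $e_k$; it does not permit replacing one family of vectors $e_k$ by a different family $e_k'$ of comparable norm. What you would need is, for each fixed $x\in X$,
\[
\int_0^1\Bigl\|\sum_{A\in\mathcal{D}} r_A(t)\,c_A\,d_{Q_j(A)}(x)\Bigr\|_E^p\,dt
\ \lesssim\
\int_0^1\Bigl\|\sum_{A\in\mathcal{D}} r_A(t)\,c_A\,d_{Q_k(\tau(A))}(x)\Bigr\|_E^p\,dt,
\]
but the index sets actually contributing to the two sums at a given $x$, namely $\{A:x\in A\}$ and $\{A:x\in\tau(A)\}$, are \emph{disjoint} (for $A\in\mathcal{D}$ the cubes $A$ and $\tau(A)$ are distinct cubes of the same level, hence disjoint). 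There is no pointwise domination in either direction, so Kahane cannot be invoked here. The equal-level condition and Lemma \ref{lem:supboundd} give you comparability of $\|d_{Q_j(A)}\|_\infty$ and $\|d_{Q_k(\tau(A))}\|_\infty$, but that is insufficient: the two functions live on different atoms.

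The way Figiel's argument actually bridges this is to pass through the ``merged'' martingale differences $\phi_A$ proportional to $1_A/\mu(A)-1_{\tau(A)}/\mu(\tau(A))$, which are supported on $A\cup\tau(A)$ and are martingale differences for a filtration built on the nested collection $\mathcal{Z}$ by splitting $A\cup\tau(A)$ into $A$ and $\tau(A)$ one pair at a time. Since $|d_{Q_j(A)}|\lesssim|\phi_A|$ and $|d_{Q_k(\tau(A))}|\lesssim|\phi_A|$ pointwise, Kahane does legitimately give
$\|\sum c_A d_{Q_j(A)}\|\lesssim\|\sum c_A\phi_A\|$ and likewise for $\tau$; the reverse inequality $\|\sum c_A\phi_A\|\lesssim\|\sum c_A d_{Q_k(\tau(A))}\|$ is the heart of the matter and is \emph{not} a contraction-principle estimate but a genuine martingale-transform/duality step that uses predictability of the multipliers on the paired filtration. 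This is the piece missing from your proposal, and it is where the full strength of Figiel's compatibility condition (nestedness of $\{A,\tau(A),A\cup\tau(A)\}$ and the disjointness $\tau(\mathcal{D})\cap\mathcal{D}=\emptyset$) is actually consumed.

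Two smaller points. First, adjoining ``the dyadic cubes of level $\lev A_l$ contained in $A_l\cup\tau(A_l)$'' adds nothing beyond $A_l$ and $\tau(A_l)$ themselves; to make $d_{Q_j(A_l)}$ measurable you must adjoin the \emph{children} of $A_l$ and $\tau(A_l)$, i.e.\ the cubes of level $\lev A_l-1$. Second, equal level gives $\mu(A_l)\asymp\mu(\tau(A_l))$ (via normality and Theorem \ref{th:dyadic}), not equality; this is harmless but worth stating correctly since the constant in the conclusion is supposed to depend on $X$.
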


\begin{theorem}
\label{th:figiel2}Let $(\tau,\mathcal{D})$ satisfy the compatibility
condition. Then the operator
\[
U_{m,i}^{(k),\mathcal{D}}f:=\sum_{A\in\mathcal{D}}\frac{\left\langle d_{Q_k(\tau(A))},f\right\rangle}{\sqrt{\mu (A)}} \left(  1_{A}-\frac
{\mu(A)}{\mu(\tau(A))}1_{\tau(A)}\right)
\]
is bounded in $L^{p}$ and satisfies the estimate%
\[
\left\Vert U_{m,i}^{(k),\mathcal{D}}f\right\Vert _{L^{p}_E(X)}\leq C_{p}\left\Vert
f\right\Vert _{L^{p}_E(X)},
\]
where $C_{p}$ depends only on $p,X$ and the $\UMD$--constant of $E$.
\end{theorem}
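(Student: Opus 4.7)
Plan: Write $e_A := d_{Q_k(\tau(A))}$ and $h_A := \mu(A)^{-1/2}\bigl(1_A - \mu(A)\mu(\tau(A))^{-1} 1_{\tau(A)}\bigr)$, so that
\[
U_{m,i}^{(k),\mathcal{D}} f = \sum_{A \in \mathcal{D}} \langle e_A, f\rangle\, h_A.
\]
Following Figiel \cite{Figiel1988}, the strategy is to exploit the nested collection $\mathcal{Z} := \{A, \tau(A), A \cup \tau(A) : A \in \mathcal{D}\}$ to introduce a secondary filtration $(\mathcal{G}_n)$ in which the $L^2$-normalized functions $\tilde{h}_A := h_A/\|h_A\|_2$ become martingale differences; the $L^p_E$-bound then follows by invoking the $\UMD$ inequality simultaneously with respect to both $(\mathcal{G}_n)$ and the Haar filtration $(\mathcal{F}_n)$. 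Note that the compatibility condition, together with $\lev A = \lev \tau(A)$, forces $A \cap \tau(A) = \emptyset$.

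First I would construct $(\mathcal{G}_n)$ by enumerating the sets of $\mathcal{Z}$ coarsest-to-finest (nestedness makes this well-defined), at each stage adding the next atom from $\mathcal{Z}$ together with the relevant dyadic cubes disjoint from $\bigcup_{A \in \mathcal{D}}(A \cup \tau(A))$. Normality of $X$ combined with $\lev A = \lev \tau(A)$ yields $\|h_A\|_2^2 = 1 + \mu(A)/\mu(\tau(A)) \sim 1$, and an argument analogous to Lemma \ref{lem:supboundd} gives $\|\tilde{h}_A\|_\infty \leq c\,\mu(A)^{-1/2}$. Thus $\tilde{h}_A$ behaves in $(\mathcal{G}_n)$ like a standard Haar function.

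Next, rewriting $U_{m,i}^{(k),\mathcal{D}} f = \sum_A \|h_A\|_2\, \langle e_A, f\rangle\, \tilde{h}_A$ and applying the $\UMD$-inequality in $(\mathcal{G}_n)$ combined with Kahane's contraction principle (Theorem \ref{th:kahane}, absorbing the bounded factors $\|h_A\|_2$), one obtains
\[
\|U_{m,i}^{(k),\mathcal{D}} f\|_{L^p_E} \leq C_p \int_0^1 \Bigl\|\sum_{A \in \mathcal{D}} \varepsilon_A(\omega)\, \langle e_A, f\rangle\, \tilde{h}_A\Bigr\|_{L^p_E} d\omega,
\]
where $(\varepsilon_A)$ are independent Rademacher functions. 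The randomization eliminates cancellation between distinct blocks and reduces the problem to a uniform bound on each signed operator.

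The main obstacle is to bound, uniformly in sign sequences $(\varepsilon_A) \in \{\pm 1\}^{\mathcal{D}}$, the operator $V_\varepsilon f := \sum_A \varepsilon_A \langle e_A, f\rangle\, \tilde{h}_A$ by a constant multiple of $\|f\|_{L^p_E}$. I would factor $V_\varepsilon = T_\varepsilon \circ P$, where $P f := \sum_A \langle e_A, f\rangle\, e_A$ is the Haar projection onto $\operatorname{span}\{e_A : A \in \mathcal{D}\}$ (which is $L^p_E$-bounded by $\UMD$ in $(\mathcal{F}_n)$), and $T_\varepsilon$ is the linear extension of $e_A \mapsto \varepsilon_A \tilde{h}_A$. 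Because $\supp e_A \subseteq \tau(A) \subseteq A \cup \tau(A) = \supp \tilde{h}_A$ and $\mathcal{Z}$ is nested, $T_\varepsilon$ can be realized as the composition of a bounded Haar multiplier in $(\mathcal{F}_n)$ rescaling by bounded factors (controlled via Kahane) and a martingale transform in $(\mathcal{G}_n)$ selecting the appropriate difference at each split node of $\mathcal{Z}$; both pieces are $L^p_E$-bounded by $\UMD$. The compatibility hypothesis $\tau(A) \notin \mathcal{D}$ is essential here to prevent the image system $\{\tilde{h}_A\}$ from overlapping pathologically with the preimage system. Combining these factorizations produces the required uniform bound, with constant $C_p$ depending only on $p$, $X$, and the $\UMD$-constant of $E$.
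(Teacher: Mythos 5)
The paper does not give a proof of Theorem~\ref{th:figiel2}; it only remarks that the proofs of Theorems \ref{th:figiel1} and \ref{th:figiel2} are slight modifications of the arguments for the Haar system in $\mathbb{R}$ in \cite{Mueller1995} (see also \cite{Figiel1988}, \cite{FigielWojtaszczyk2001}). Your outline has the right flavor -- secondary filtration adapted to the nested collection $\mathcal{Z}$, UMD and Kahane to randomize, factoring out the Haar projection -- and indeed the preliminary estimates you record ($\|h_A\|_2\sim1$, $\|\tilde h_A\|_\infty\sim\mu(A)^{-1/2}$, $A\cap\tau(A)=\emptyset$) are all correct and needed.

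The gap is in the sentence claiming that $T_\varepsilon:e_A\mapsto\varepsilon_A\tilde h_A$ ``can be realized as the composition of a bounded Haar multiplier in $(\mathcal{F}_n)$ rescaling by bounded factors and a martingale transform in $(\mathcal{G}_n)$.'' This is not correct as stated. A Haar multiplier in $(\mathcal{F}_n)$ produces scalar multiples $c_Ae_A$ of the same Haar functions, and a martingale transform in $(\mathcal{G}_n)$ acts diagonally on $\mathcal{G}_n$-martingale differences; composing them cannot change $e_A$ into $\tilde h_A$, because $e_A$ is not a $\mathcal{G}_n$-martingale difference at the split node where $A\cup\tau(A)$ decomposes into $A$ and $\tau(A)$ -- in fact, since $e_A$ is supported inside $\tau(A)$ and has mean zero, the $\mathcal{G}_n$-difference of $e_A$ at that node vanishes identically. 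The map $e_A\mapsto\tilde h_A$ is a genuine \emph{rearrangement between two different martingale difference systems} (source $(\mathcal{F}_n)$, target $(\mathcal{G}_n)$), and its $L^p_E$-boundedness is exactly the content of Figiel's lemma; it does not follow from the UMD inequality applied separately in each filtration. That transference step is where the compatibility condition (nestedness of $\mathcal{Z}$, equal levels, $\tau(A)\notin\mathcal{D}$, comparable $L^\infty$- and $L^2$-norms) is actually used, e.g.\ via a square-function or good-$\lambda$-type comparison of $\sum_A\|a_A\|^2|\tilde h_A|^2$ with $\sum_A\|a_A\|^2|e_A|^2$ exploiting the chain structure of $\{A\cup\tau(A)\}$ on the fibers, or via the combinatorial decomposition in \cite{Figiel1988}. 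As written, the proposal replaces the heart of the argument with a factorization that does not exist, so it does not establish the theorem.
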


The proofs of the foregoing two theorems are slight modifications of the analogous results for the Haar system in $\mathbb{R}$ in \cite{Mueller1995}.

\begin{remark}
If we apply this theorem to the collection $\tau(\mathcal{D})$ and the map
$\tau^{-1},$ we get that the operator%
\[
f\mapsto W_{m,i}^{(k),\mathcal{D}}(f):=\sum_{A\in\mathcal{D}}\frac{\left\langle d_{Q_k(A)},f\right\rangle}{\sqrt{\mu(\tau(A))}}\left(
1_{\tau(A)}-\frac{\mu(\tau(A))}{\mu(A)}1_{A}\right)
\]
is bounded on $L^{p}_E.$ $W_{m,i}^{\mathcal{D}}$ is the linear extension of the
mapping%
\[
d_{Q_k(A)}\longmapsto\frac{1}{\sqrt{\mu(\tau(A))}}\left(1_{\tau(A)}-\frac{\mu(\tau(A))}{\mu(A)}1_{A}\right),
\]
for $A\in\mathcal{D}.$
\end{remark}

\subsection{The Boundedness of the Operators
$W_{m,i}^{(k)},U_{m,i}^{(k)},T_{m,i}^{(j,k)}$}\label{sec:boundwithoutfigiel}
Using the decomposition theorems proved in Section \ref{sec:homtype}, we are now able to reduce the general case of Proposition \ref{prop:shift} to the special case of nested collections proved in the preceding Section \ref{sec:figiel}. \\ 
\begin{proof}[Proof of Proposition \ref{prop:shift}]
If we invoke the decomposition results of Chapter \ref{sec:homtype}, we see that $\mathcal{C}_{m,i}$ splits into $M$ collections $\mathcal{G}$, where $M$ is
constant. Further, every $\mathcal{A}_{\mathcal{G}}$ splits into $L(m+1)$ collections
$\mathcal{L}.$ $\mathcal{L}$ decomposes in $\mathcal{H},\mathcal{I}%
_{1},\mathcal{I}_{2},\mathcal{J}_{1},\mathcal{J}_{2},\mathcal{J}_{3},$ where
on $\mathcal{H},$ $\mathcal{I}_{1}$ and $\mathcal{I}_{2},$ the operators
$W_{m,i}^{(k)},U_{m,i}^{(k)}$ and $T_{m,i}^{(j,k)}$ are bounded by a constant which is independent of $m$. Since the collections  $\mathcal{H},$ $\mathcal{I}_{1}$ and $\mathcal{I}_{2}$ satisfy Figiel's compatibility condition (with the injection $\tau$) by Theorem \ref{th:zerl}, this follows directly from Theorems \ref{th:figiel1},
\ref{th:figiel2} and the Remark after them.
The collections $\mathcal{J}_{i},$ $i\in\{1,2,3\}$ need further arguments. For the following we fix an index $1\leq j\leq N-1$ and define the following map on $\gamma_{1}(\mathcal{J}_{1})$ 
\begin{align*}
\rho:\gamma_{1}(\mathcal{J}_{1})  & \rightarrow\mathcal{A}\\
A  & \longmapsto\tau\circ\gamma_{1}(A).
\end{align*}
Since the mapping $(\gamma_{1},\mathcal{J}_{1})$ (and hence also $(\gamma
_{1}^{-1},\gamma_{1}(\mathcal{J}_{1})))$ satisfies Figiel's compatibility
condition (note Lemma \ref{lem:colJ1}), we see from Theorem \ref{th:figiel1} and Theorem \ref{th:figiel2}
that the linear extensions of the mappings%
\[
T_{\gamma_{1}}^{(j,k)}:d_{Q_k(A)}\longmapsto d_{Q_{j}(\gamma_{1}(A))},\quad W_{\gamma_{1}%
}^{(k)}:d_{Q_k(A)}\longmapsto\frac{1}{\sqrt{\mu(\gamma_1(A))}}\left(1_{\gamma_{1}(A)}-\frac{\mu(\gamma_{1}(A))}{\mu(A)}1_{A}\right)%
\]
where $A\in\mathcal{J}_{1},$ are bounded on $L^{p} $. 
In Theorem \ref{th:zerl} we constructed a decomposition of $\mathcal{I}$ into $\mathcal{I}%
_{1}$ and $\mathcal{I}_{2}$ which both satisfied Figiel's compatibility condition with the injection $\tau$. Since with $\gamma_1$ we moved sets in $\mathcal{J}_1$ away from the boundary of their arithmetic predecessors, we are in the same position for the collection $\gamma_1(\mathcal{J}_1)$ and the injection $\rho$, since if we again perform a decomposition like in Proposition \ref{prop:deccmi} we are able to use the Remark after the proof of Lemma \ref{lem:schoenI}. We thus obtain from the $\mathcal{I}$-part of Theorem \ref{th:zerl} and again from Theorems \ref{th:figiel1},
\ref{th:figiel2} and the Remark after them that the linear extension of the mappings%
\[
T_{\rho}^{(j,k)}:d_{Q_k(A)}\longmapsto d_{Q_{j}(\rho(A))},\quad W_{\rho}^{(k)}:d_{Q_k(A)}%
\longmapsto\frac{1}{\sqrt{\mu(\rho(A))}}\left(1_{\rho(A)}-\frac{\mu(\rho(A))}{\mu(A)}1_{A}\right)%
\]
where $A\in\gamma_{1}(\mathcal{J}_{1}),$ are bounded
on $L^{p}_E$ by a constant which depends only on $X$. For the same reason, we may even replace $\rho$ by $\rho^{-1}$ and the
assertions stay valid. We conclude that the composition%
\[
T_{\gamma_{1}}^{(j,1)}\circ T_{\rho^{-1}}^{(1,k)}=:T_{\tau^{-1}}^{(j,k)}%
\]
is bounded on $L^p_E$ and it is the linear extension of the map
\[
d_{Q_k(\tau(A))}\longmapsto d_{Q_{j}(A)},
\]
where $A\in\mathcal{J}_{1}.$  We remark that $T_{\tau^{-1}}^{(j,k)}$
is the shift operator $T_{m,i}^{(j,k),\text{ }\mathcal{J}_{1}},$ which is thus shown to be bounded. Now we
come to the linear extension of the map%
\[
d_{Q_k(A)}\longmapsto\frac{1}{\sqrt{\mu(\tau(A))}}\left(1_{\tau(A)}-\frac{\mu(\tau(A))}{\mu(A)}1_{A}\right),\quad\text{for
}A\in\mathcal{J}_{1}%
\]
which is the mapping $W_{m,i}^{(k),\mathcal{J}_{1}}.$ For
finite linear combinations of Haar functions $f=\sum_{l}a_{l}d_{Q_{l}},$ where $Q_{l}%
=Q_k(A_{l})$,$A_{l}\in\mathcal{J}_{1}$ and $a_l\in E$, $W_{m,i}^{(k),\mathcal{J}_{1}}$ has the representation
\[
W_{m,i}^{(k),\mathcal{J}_{1}}f=(W_{\rho}^{(1)}\circ T_{\gamma_{1}}^{(1,k)})(f)+\sum a_{Q_{l}%
}\sqrt{\frac{\mu(\tau(A_{l}))}{\mu(\gamma_{1}(A_{l}))}}W_{\gamma_{1}}^{(k)}(d_{Q_{l}}).
\]
With the unconditionality of the $\{d_{Q}\}$ and Kahane's contraction
principle, we conclude that $W_{m,i}^{(k),\mathcal{J}_{1}}$ is bounded on
$L^{p}_E.$ Analogously, for $f=\sum_{l}a_{l}d_{Q_{l}},$ where $Q_{l}%
=Q_k(\tau(A_{l}))$ and $A_{l}\in\mathcal{J}_{1},$ we have the
representation%
\[
U_{m,i}^{(k),\mathcal{J}_{1}}f=\sum_{l}a_{l}\sqrt{\frac{\mu(A_{l})}{\mu(\gamma
_{1}(A_{l}))}}W_{\rho^{-1}}^{(k)}(d_{Q_{l}})+(W_{\gamma_{1}^{-1}}^{(1)}\circ T_{\rho^{-1}}^{(1,k)})(f).
\]
A similar reasoning applies to $\mathcal{J}_{2},$ where in this case we let%
\[
\gamma_{2}:\tau(\mathcal{J}_{2})\rightarrow\mathcal{A}%
\]
to move away from the boundary of the arithmetic predecessor. The mapping $\rho$ is
defined as%
\begin{eqnarray*}
\rho:\mathcal{J}_{2}  & \rightarrow & \mathcal{A}\\
A  & \mapsto & \gamma_{2}\circ\tau(A).
\end{eqnarray*}
In the case for $\mathcal{J}_{3}$ we define both injections
$\gamma_{1}$ and $\gamma_{2}$ from above to act on $\mathcal{J}_{3}$ and
$\tau(\mathcal{J}_{3})$ respectively and set%
\begin{align*}
\rho:\gamma_{1}(\mathcal{J}_{3})  & \rightarrow\mathcal{A}\\
A  & \longmapsto\gamma_{2}\circ\tau\circ\gamma_{1}(A).
\end{align*}
If we summarize these considerations, we get a decomposition of the operators $W_{m,i}^{(k)},U_{m,i}^{(k)}$ and $T_{m,i}^{(j,k)}$ into a sum of $C(m+1)$ bounded operators on $L^p$, where their bound depends only on $p,X$ and the $\UMD$--constant of $E$. Since so does $C$, we get the assertions of Proposition \ref{prop:shift} and that $W_{m,i}$ is bounded on $L^p$ by $C_p(m+1)$. 
\end{proof}

\subsection{The Proof of Theorem \ref{th:k1}}
\newcounter{mycount1} \setcounter{mycount1}{\value{theorem}} \setcounter{theorem}{\themycount}

\begin{theorem}
For all $1<p<\infty$ there exists a constant $C_{p}$ depending only on $p,X$, the $\UMD$ constant of $E$ and $C_S$ from (\ref{eq:struct1}), such that for all $f\in L^{p}_E(X),g\in L^{p'}_{E'}(X)$, which are finite linear combinations of Haar functions, the operators $K_{1,m},K_{3,m}$ and the bilinear form $B_{2,m}$ satisfy the following estimates:
\begin{align}
\left\Vert K_{1,m}(f)\right\Vert _{L^{p}_E(X)} & \leq C_{p}(m+1)q^{-m\delta}\left\Vert
f\right\Vert _{L^{p}_E(X)}, \tag{\ref{eq:K1}}\\
\left\Vert K_{3,m}(f)\right\Vert _{L^{p}_E(X)} & \leq C_{p}(m+1)q^{-m\delta}\left\Vert
f\right\Vert _{L^{p}_E(X)}, \tag{\ref{eq:K2}}\\
\left|B_{2,m}(f,g)\right| & \leq C_{p}(m+1)q^{-m\delta}\left\Vert
f\right\Vert _{L^{p}_E(X)}\left\|g\right\|_{L^{p'}_{E'}(X)}.\tag{\ref{eq:K4}}
\end{align}
Here, $p'=p/(p-1)$ denotes the conjugate exponent to $p$.
\end{theorem}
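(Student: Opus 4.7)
My plan is to exploit the decompositions developed in Section \ref{sec:homtype} to write each of $K_{1,m},K_{3,m},B_{2,m}$ as a sum of $M_1 q^m$ pieces, each factoring through one of the shift/rearrangement operators $T_{m,i}^{(j,k)},U_{m,i}^{(k)},W_{m,i}^{(j)}$ of Section \ref{sec:rearr} composed with a Haar multiplier whose eigenvalues are kernel coefficients. Three ingredients then combine: Proposition \ref{prop:boundednumber} contributes the factor $M_1 q^m$ (number of subcollections $\mathcal{C}_{m,i}$), Proposition \ref{prop:shift} contributes $(m+1)$ (the $L^p_E$-norm of a shift/rearrangement), and the structural estimate (\ref{eq:struct1}) contributes $q^{-m(1+\delta)}$ (since $(A,B)\in\mathcal{C}_m$ forces $d(A,B)\geq q^{m-1+\lev A}$, whence $1+d(A,B)/q^{\lev A}\gtrsim q^m$). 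Their product is exactly $(m+1)q^{-m\delta}$.

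First I carry out this scheme for $K_{1,m}$. Splitting $\mathcal{C}_m=\bigcup_{i=1}^{M_1 q^m}\mathcal{C}_{m,i}$ with its injections $\tau$, and enumerating $Q_j(A)\in\mathcal{E}(A),Q_k(\tau(A))\in\mathcal{E}(\tau(A))$ for $1\leq j,k\leq N-1$, the piece $K_{1,m,i}$ arising from $\mathcal{C}_{m,i}$ factors as
\[
K_{1,m,i}=\sum_{j,k=1}^{N-1} T_{m,i}^{(j,k)}\circ\mathcal{M}_{m,i}^{(j,k)},
\]
where $\mathcal{M}_{m,i}^{(j,k)}$ is the Haar multiplier with eigenvalue $\langle k,d_{Q_j(A)}\otimes d_{Q_k(\tau(A))}\rangle$ on $d_{Q_k(\tau(A))}$ for $A\in\mathcal{A}_{m,i}$ and zero on all other Haar functions. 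The structural estimate (\ref{eq:struct1}) bounds these eigenvalues uniformly by a constant multiple of $C_S q^{-m(1+\delta)}$, and the $\UMD$ property combined with Kahane's contraction principle (Theorem \ref{th:kahane}) yields $\|\mathcal{M}_{m,i}^{(j,k)}\|_{L^p_E\to L^p_E}\leq C\beta_p C_S q^{-m(1+\delta)}$. Coupled with $\|T_{m,i}^{(j,k)}\|\leq C_p(m+1)$ from Proposition \ref{prop:shift}, summing over the $(N-1)^2$ pairs $(j,k)$ and applying the triangle inequality over $i=1,\ldots,M_1 q^m$ gives (\ref{eq:K1}).

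The operator $K_{3,m}$ and the bilinear form $B_{2,m}$ follow the same template with $T$ replaced by $U$ and $W$ respectively. For $K_{3,m,i}$, using $1_A=\sqrt{\mu(A)}\,d_Q^{(0)}$ for any $Q\in\mathcal{E}(A)$ one rewrites
\[
\frac{\langle k,1_A\otimes d_{Q_k(\tau(A))}\rangle}{\mu(A)}=\frac{c_k(A)}{\sqrt{\mu(A)}}
\]
with $|c_k(A)|\leq C_S' q^{-m(1+\delta)}$ by (\ref{eq:struct1}); this produces $K_{3,m,i}=\sum_{k=1}^{N-1} U_{m,i}^{(k)}\circ\mathcal{M}_{m,i}^{(k)}$ with a multiplier of the same size, and (\ref{eq:K2}) follows. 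For $B_{2,m,i}$, viewing $\langle 1_{\tau(A)}-\tfrac{\mu(\tau(A))}{\mu(A)}1_A,f\rangle/\sqrt{\mu(\tau(A))}$ as a pairing with the image of a Haar function under $W_{m,i}^{(j)}$ and passing to the adjoint, one obtains $B_{2,m,i}(f,g)=\sum_{j=1}^{N-1}\langle W_{m,i}^{(j)}\circ\widetilde{\mathcal{M}}_{m,i}^{(j)} f,g\rangle$, and the $L^p_E$-boundedness of $W_{m,i}^{(j)}$ noted at the end of the proof of Proposition \ref{prop:shift} closes the argument for (\ref{eq:K4}).

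The main obstacle is quantitative: it is crucial that Proposition \ref{prop:boundednumber} yields only $M_1 q^m$ subcollections (not $q^{2m}$), since otherwise the triangle-inequality loss over $i$ would dominate the off-diagonal decay $q^{-m(1+\delta)}$ and destroy summability in $m$. A secondary bookkeeping issue is the tracking of the $\sqrt{\mu(A)}$ and $\sqrt{\mu(\tau(A))}$ normalizations when identifying kernel coefficients with multiplier eigenvalues, so that (\ref{eq:struct1}) is invoked against the correct tensor $d_Q^{(\varepsilon_1)}\otimes d_R^{(\varepsilon_2)}$ with $\varepsilon\in\NS$. Once these calibrations are in place, the result is assembled from the tools already developed.
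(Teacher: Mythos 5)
Your proposal is correct and follows essentially the same route as the paper: the identical factorizations $K_{1,m}=\sum_{i}\sum_{j,k}T_{m,i}^{(j,k)}\circ\mathcal{M}_{m,i}^{(j,k)}$, $K_{3,m}=\sum_i\sum_k U_{m,i}^{(k)}\circ\mathcal{M}_{m,i}^{(k)}$ and $B_{2,m}(f,g)=\langle\sum_i\sum_j W_{m,i}^{(j)}\circ\widetilde{\mathcal{M}}_{m,i}^{(j)}f,g\rangle$, with the multiplier coefficients controlled by (\ref{eq:struct1}) giving $q^{-m(1+\delta)}$, Proposition \ref{prop:shift} giving $(m+1)$, and Proposition \ref{prop:boundednumber} giving the factor $M_1q^m$. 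Your bookkeeping of the $\sqrt{\mu(A)}$, $\sqrt{\mu(\tau(A))}$ normalizations matches the paper's identification of the coefficients with the tensors $d_Q^{(\varepsilon_1)}\otimes d_R^{(\varepsilon_2)}$, so nothing is missing.
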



\begin{proof}
It holds that 
\[
K_{1,m}=\sum_{i=1}^{M_1 q^m}\sum_{j,k=1}^{N-1} T_{m,i}^{(j,k)}\circ\mathcal{M}_{m,i}^{(j,k)},
\]
where $T_{m,i}^{(j,k)}$ is the shift operator introduced in (\ref{eq:rearr4}) and $\mathcal{M}_{m,i}^{(j,k)}$ is the Haar multiplication operator which maps
\[
d_{Q_k(\tau(A))}\mapsto \left<k,d_{Q_j(A)}\otimes d_{Q_k(\tau(A))}\right>d_{Q_k(\tau(A))}\quad\text{for }A\in\mathcal{A}_{m,i}.
\]
Analogously we get
\begin{equation}
K_{3,m}=\sum_{i=1}^{M_1 q^m}\sum_{k=1}^{N-1} U_{m,i}^{(k)}\circ \mathcal{M}_{m,i}^{(k)},\quad B_{2,m}(f,g)=\left<\sum_{i=1}^{M_1q^m}\sum_{j=1}^{N-1} W_{m,i}^{(j)}\circ \widetilde{\mathcal{M}}_{m,i}^{(j)}f,g\right>,\label{eq:decompK}
\end{equation}
where $\mathcal{M}_{m,i}^{(k)}$ and $\widetilde{\mathcal{M}}_{m,i}^{(j)}$ are Haar multiplication operators which map
\[
d_{Q_k(\tau(A))}\mapsto \frac{\left<k,1_A\otimes d_{Q_k(\tau(A))}\right>}{\sqrt{\mu(A)}}d_{Q_k(\tau(A))}\quad\text{and}\quad d_{Q_j(A)}\mapsto\frac{\left<k,d_{Q_j(A)}\otimes 1_{\tau(A)}\right>}{\sqrt{\mu(\tau(A))}}d_{Q_j(A)},
\]
respectively. These decompositions follow from the definition of $K_{1,m},K_{3,m},B_{2,m}$ in $(\ref{eq:defk1m})-(\ref{eq:defk2m})$, Proposition \ref{prop:boundednumber}, the definition of the shifts $T_{m,i}^{(j,k)}$ and rearrangements $U_{m,i}^{(k)},W_{m,i}^{(j)}$ in (\ref{eq:rearr2}), (\ref{eq:rearr4}) and the remark after Theorem \ref{th:figiel2}. Since Haar multipliers are bounded on $L^p_E(X)$ by the supremum of their coefficients, we deduce by the structural estimate (\ref{eq:struct1}) and Proposition \ref{prop:shift}
\begin{eqnarray*}
\left\|K_{1,m}f\right\|_{L^p_E(X)}&=&\left\|\sum_{i=1}^{M_1 q^m}\sum_{j,k=1}^{N-1}T_{m,i}^{(j,k)}\circ\mathcal{M}_{m,i}^{(j,k)}f\right\|_{L^p_E(X)} \\
&\leq& \sum_{i=1}^{M_1 q^m}\sum_{j,k=1}^{N-1}\left\|T_{m,i}^{(j,k)}\right\|_{L^p_E(X)\rightarrow L^p_E(X)}\left\|\mathcal{M}_{m,i}^{(j,k)}\right\|_{L^p_E(X)\rightarrow L^p_E(X)}\left\|f\right\|_{L^p_E(X)}\\
&\leq& C q^m(m+1)\sup_{\stackrel{1\leq j,k\leq N-1}{A\in\mathcal{A}_{m,i}}}\left|\left<k,d_{Q_j(A)}\otimes d_{Q_k(\tau(A))}\right>\right|\left\|f\right\|_{L^p_E(X)}\\
&\leq & C q^m(m+1) \sup_{A\in\mathcal{A}_{m,i}}\left(1+\frac{d(A,\tau(A))}{q^{\lev A}}\right)^{-1-\delta}\left\|f\right\|_{L^p_E(X)},
\end{eqnarray*}
where $C$ is a constant only depending on $p,X,$ the $\UMD$-constant of $E$ and $C_S$ that possibly changes from line to line. Now we get from the definition of $\mathcal{C}_m$ in Section \ref{sec:dyadicannuli} (and hence from the corresponding property for $\mathcal{A}_{m,i}$) that the last expression in the previous display is less or equal
\[
Cq^{-\delta m}(m+1)\left\|f\right\|_{L^p_E(X)},
\]
which is the required conclusion for $K_{1,m}$. The two remaining assertions follow from similar arguments using the decompositions in (\ref{eq:decompK})
\end{proof}
\setcounter{theorem}{\value{mycount1}}

\paragraph{Closing Remark.} 
For singular integral operators in the scalar valued case, the $T(1)$ theorem was extended in an important series of papers (starting with the pioneering contribution in \cite{DavidMattila2000} and \cite{David1998} and extended in \cite{NazarovTreilVolberg2003}) to metric measure spaces that are not necessarily of homogeneous type. 
In this nonhomogeneous setting there holds a $\UMD$ valued $T(1)$ theorem (\cite{Hytonen2009}). To differentiate those results from our work in this paper we note that the initial step in \cite{David1998,DavidMattila2000,NazarovTreilVolberg2003,Hytonen2009} is the expansion of the singular integral kernel along the \emph{anisotropic tensor product} Haar basis in $\mathbb{R}^n\times\mathbb{R}^n$. Such an expansion leads to decompositions of integral operators that are \emph{structurally} different from the basic building blocks studied in the present paper.

As mentioned already in the introduction, our decomposition (\ref{eq:decomp2}) into simple rearrangements and shifts is the central assertion of our work and it permits us to study integral operators beyond the Calderón-Zygmund class (\cite{KamontMueller2006,LeeMuellerMueller2011}).

\paragraph{Acknoledgements}
Both authors are supported by FWF P 20166-N18. This work is part of the PhD thesis of the second named author at the Department of Analysis at JKU Linz.

\nocite{Christ1990}\nocite{Christ1990book}\nocite{CoifmanWeiss1977}%
\nocite{Mueller2005}\nocite{JohnNirenberg1961}

\nocite{MaciasSegovia1979}\nocite{Chatterji1968}\nocite{Bourgain1983}%
\nocite{Garsia1973}\nocite{DavidJourne1984}

\nocite{Figiel1990}\nocite{Xu1995}\nocite{MaciasSegovia1979Distr}%
\nocite{BlascoXu1991}\nocite{JansonJones1982}

\nocite{FigielWojtaszczyk2001}\nocite{Blasco1989}\nocite{Figiel1988}%
\nocite{Stein1970}\nocite{Stein1993}

\nocite{MarcusPisier1981}\nocite{Figiel1989}

\nocite{BennettSharpley1988}\nocite{Burkholder2001}%
\nocite{AimarBernardisIaffei2007}
\nocite{CoifmanMaggioni2006}
\nocite{Kahane1985}
\nocite{HanSawyer1994}
\nocite{Hytonen2009}
\nocite{BeylkinCoifmanRokhlin1992}
\nocite{NazarovTreilVolberg2003}
\nocite{David1991}
\nocite{David1998}
\nocite{Mueller1995}
\nocite{MuellerSchechtman1991}
\nocite{DavidMattila2000}
\nocite{HytonenMcIntoshPortal2008}
\nocite{CoifmanJonesSemmes1989}
\nocite{LeeMuellerMueller2011}
\nocite{KamontMueller2006}

\bibliographystyle{plain}
\bibliography{Passenbr}

\end{document}